\newtheorem{theorem}{Theorem}[section]
\newtheorem{corollary}[theorem]{Corollary}
\newtheorem{lemma}[theorem]{Lemma} 
\newtheorem{proposition}[theorem]{Proposition} 
\theoremstyle{definition} 
\newtheorem*{remark}{Remark} 
\DeclareMathAlphabet{\eu}{U}{eur}{m}{n}
\newcommand{\ent}{{\mathbb Z}}
\newcommand{\real}{{\mathbb R}}
\newcommand{\pr}{\mathbf{Pr}}
\newcommand{\ex}{\mathbf{E}}
\newcommand{\mbf}[1] {\text{\boldmath$#1$}}
\newcommand{\inp}[1]{\left \langle #1 \right \rangle}
\newcommand{\suma}[1]{\sum_{\substack{#1}}}
\newcommand{\proda}[1]{\prod_{\substack{#1}}}
\DeclareMathOperator{\spec}{spec}
\DeclareMathOperator{\Ker}{Ker}
\DeclareMathOperator{\rank}{rank}
\DeclareMathOperator{\tr}{tr}
\begin{document}

\title{The chromatic number of random lifts of complete graphs}
\date{\today}
\author{JD Nir\\
\small{University of Manitoba}\\
\small{\texttt{jd.nir@umanitoba.ca}}
 \and
Xavier P\'erez-Gim\'enez\thanks{Research supported in part by Simons Foundation Grant \#587019.}\\
\small{University of Nebraska-Lincoln}\\
\small{\texttt{xperez@unl.edu}}}
\maketitle
\begin{abstract}
An $n$-lift of a graph $G$ is a graph from which there is an $n$-to-$1$ covering map onto $G$. Amit, Linial, and Matou\v sek (2002) raised the question of whether the chromatic number of a random $n$-lift of $K_5$ is concentrated on a single value.
We consider this problem for $G=K_{d+1}$, and show that for fixed $d\ge 3$ the chromatic number of a random lift of $K_d$ is (asymptotically almost surely) either $k$ or $k+1$, where $k$ is the smallest integer satisfying $d < 2k \log k$. Moreover, we show that, for roughly half of the values of $d$, the chromatic number is concentrated on $k$. The argument for the upper-bound on the chromatic number uses the small subgraph conditioning method, and it can be extended to random $n$-lifts of $G$, for any fixed $d$-regular graph $G$.
\end{abstract}

\newcommand{\and}{\qquad\text{and}\qquad} % defined here to avoid conflict with maketitle

%%%
% Introduction
%%%
\section{Introduction}
\label{sec:introduction}

Given two loopless multigraphs (graphs, for short) $G$ and $L$, a {\em covering map} is a surjective graph homomorphism $f: L\to G$ which is also a local isomorphism: that is, for each vertex $v$ of $L$, the set of edges incident with $v$ is mapped bijectively to the set of edges incident with $f(v)$. This corresponds to the general topological notion of covering map restricted to the case of graphs, and can be described in a purely combinatorial way.
If such a map exists, we say that $L$ is a {\em covering graph} or a {\em lift} of $G$, and call $G$ the {\em base graph}.
For each vertex $v\in V(G)$, the set $f^{-1}(v) \subseteq V(L)$ is called the {\em fiber} of $v$.
It is well-known and easy to show that if $G$ is connected then all fibers must have the same cardinality. 
We call a lift of a (not necessarily connected) graph $G$ an $n$-lift if all the fibers have size $n$.
In view of all the above, given a fixed base graph $G$, we define a random lift $L_n(G)$ to be an $n$-lift of $G$ chosen uniformly at random.
This model of random lifts was introduced in a series of papers by Amit, Linial, Matou\v sek and Rozenman~\cite{AL02,AL06,ALM02,LR05}.
Roughly speaking, one can generate a random instance of $L_n(G)$ by first replacing each vertex in $G$ by a fiber containing $n$ vertices and then, for each edge of $G$, adding a random perfect matching between the two fibers corresponding to its endpoints, with all the perfect matchings being chosen independently.
Note that, in the case of multiple edges, two edges of $G$ with the same endpoints contribute to two perfect matchings between their fibers.
This model can also be extended to the case in which the base graph $G$ is allowed to have loops.
Here we distinguish two types of loops: {\em half-loops} and {\em whole-loops}. Both half- and whole-loops denote edges that are only incident with one single vertex, but each whole-loop contributes twice to the degree of its endpoint whereas each half-loop contributes only once.
To form a random lift $L_n(G)$ of a base graph $G$ with loops, for each half-loop with endpoint $v\in V(G)$, we add a random perfect matching to the fiber of $v$ (which requires $n$ to be even). Likewise, for each whole-loop with endpoint $v\in V(G)$, we add a random $2$-factor (i.e.~a spanning 2-regular graph, possibly with whole-loops and double edges) to its fiber.
For a more detailed description of this and other related models of random lifts of a graph with loops and multiple edges we refer the reader to~\cite{FK19}.

If $G$ is a $d$-regular graph, so is any lift of $G$. Therefore, $L_n(G)$ provides a model of random $d$-regular graphs on $n |V(G)|$ vertices.
A simple and interesting case is when $G$ is a bouquet $B_d$ (i.e.~one single vertex with $d$ half-loops). Then the corresponding random lift is the union of $d$ independent random perfect matchings on a common set of $n$ vertices, which (conditional upon obtaining a simple graph) is known to be contiguous to the uniform model of random $d$-regular graphs on $n$ vertices~\cite{GJKW02}, which we denote by $G_{n,d}$.
On the other hand, for any $d$-regular base graph $G$ with more than one vertex, $L_n(G)$ is not contiguous to the uniform model $G_{n|V(G)|,d}$. (This can be easily seen by bounding the expected number of covering maps from a uniform random $d$-regular graph to $G$.) The case when $G$ is the complete $d$-regular graph $K_{d+1}$ has been widely studied as well.

The chromatic number $\chi(G)$ of a graph $G$ is the smallest number of colours needed to properly colour the vertices of the graph: that is, so that no pair of adjacent vertices receive the same colour.
Clearly, if $L$ is a lift of $G$, then $\chi(L)\le \chi(G)$, since a proper $k$-colouring of $G$ naturally induces a proper $k$-colouring of $L$ by assigning to all vertices of the fiber $f^{-1}(v)$ the same colour as $v$. In general, this bound can be far from being tight since, for instance, one can easily build a bipartite lift of $K_k$ for any given $k$.
One natural question is to determine the typical asymptotic behaviour of $\chi(L_n(G))$ as $n\to\infty$.
In this direction, Amit, Linial and Matou\v sek~\cite{ALM02} proved that, for any fixed simple graph $G$, $\chi(L_n(G)) \le (1+\epsilon_\Delta)\Delta/\log\Delta$ a.a.s.\footnote{We say that a sequence of events $E_n$ holds {\em asymptotically almost surely} (a.a.s.) if $\lim_{n\to\infty} \pr(E_n)=1$.}, where $\Delta=\Delta(G)$ is the maximum degree of $G$ and $\lim_{\Delta\to\infty}\epsilon_\Delta=0$.
Moreover, they showed that, for any nonempty simple $G$, a.a.s.\ $\chi(L_n(G)) \ge \sqrt{\chi(G)/3\log\chi(G)}$, and they asked whether or not this lower bound can be replaced by $\chi(L_n(G)) \ge C ( \chi(G)/\log\chi(G) )$, for some fixed constant $C>0$. They proved that this is indeed true when $G=K_k$, so combining the two bounds yields $C_1 k/\log k \le \chi(L_n( K_k )) \le C_2 k/\log k$ a.a.s.\ in that case, for some fixed constants $C_1,C_2<0$.
They also conjectured that, for every simple graph $G$, there is some value $k_G$ for which $\chi(L_n(G)) = k_G$ a.a.s. This is known to be true when $G$ is bipartite or when $\Delta(G)\le 3$, but for instance it is not known for $G=K_k$ with $k\ge5$. The case $G=K_5$ has received some attention and, for instance, Farzad and Theis~\cite{FT12} proved the conjecture for $G=K_5-e$.

On the other hand, a lot of work has been done to determine the chromatic number of a random $d$-regular graph from the uniform model $G_{n,d}$ (see~\cite{FL92,AM04,SW07,SW07a,KPW10,CEH13}).
(Recall that, this model is contiguous to $L_n(B_d)$, conditional upon the lift being a simple graph.)
For instance, Kemkes et al.~\cite{KPW10} determined the chromatic number of $G_{n,d}$ to be a.a.s.\ concentrated in a 2-value window $\{k_d,k_d+1\}$, where $k_d$ is the smallest integer $k$ satisfying $d < 2k \log k$. They also showed that for roughly half of the values of $d$, the largest value can be dropped and thus the chromatic number is a.a.s.\ $k_d$. Coja-Oghlan et al.~\cite{CEH13} extended this result to all sufficiently large values of $d$ using ideas arising from statistical physics.

\paragraph{Our contribution.}
We analyze the chromatic number of a random $n$-lift of $G=K_{d+1}$, and show that it is a.a.s.\ in $\{k_d,k_d+1\}$, where $k_d$ is defined as above. Moreover, for roughly half of the values of $d$, the chromatic number of $L_n(K_{d+1})$ is a.a.s.~$k_d$.
In other words, we show that the results on the uniform model of $d$-regular graphs in~\cite{KPW10} (that is, $G=B_d$) also hold for a random lift of $G=K_{d+1}$. Furthermore, our upper bounds on the chromatic number are also valid for a random lift of any fixed $d$-regular loopless graph $G$ (possibly with multiple edges). Specifically, we prove the following two theorems.
\begin{theorem} \label{thm:lower_bound}
Let $k\ge 2$, and define
\[
u_k = \frac{2\log k}{\log k - \log(k-1)} < (2k-1)\log k.
\]
Suppose $d \ge u_k$. Then a.a.s.\ a random $n$-lift of $K_{d+1}$ is not $k$-colourable.
\end{theorem}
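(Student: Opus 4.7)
My approach is a first-moment argument. Let $X$ denote the number of proper $k$-colourings of $L_n(K_{d+1})$; by Markov's inequality it suffices to prove $\ex X = o(1)$. I would group colourings by their \emph{profile} $\mathbf{n} = (n_{i,c})_{i \in [d+1],\, c \in [k]}$, where $n_{i,c}$ counts the vertices of colour $c$ lying in the fiber above the $i$-th vertex of $K_{d+1}$. Since the $\binom{d+1}{2}$ random perfect matchings defining $L_n(K_{d+1})$ are mutually independent, the probability that a given colouring with profile $\mathbf{n}$ is proper factorises as $\prod_{\{i,j\} \in E(K_{d+1})} P(\mathbf{n}_i, \mathbf{n}_j)$, where $\mathbf{n}_i = (n_{i,c})_{c \in [k]}$ and $P(\mathbf{a}, \mathbf{b})$ denotes the probability that a uniform perfect matching between two $n$-sets with colour class sizes $\mathbf{a}, \mathbf{b}$ has no monochromatic edge. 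Hence
$$\ex X \;=\; \sum_{\mathbf{n}} \bigg(\prod_{i=1}^{d+1}\binom{n}{n_{i,1}, \ldots, n_{i,k}}\bigg) \prod_{\{i,j\} \in E(K_{d+1})} P(\mathbf{n}_i, \mathbf{n}_j).$$

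For the asymptotic analysis I would apply Stirling's formula to the multinomials and analyse $P(\mathbf{a}, \mathbf{b})$ via its representation as a sum over non-negative integer $k \times k$ ``edge-type'' matrices with zero diagonal and prescribed row and column sums; by Laplace's method this yields $P(\mathbf{a}, \mathbf{b}) = n^{O(1)} \exp(n\, Q(\alpha, \alpha'))$, where $\alpha = \mathbf{a}/n$ and $\alpha' = \mathbf{b}/n$ (viewed as probability vectors on $[k]$) and $Q$ is an explicit rate function. Setting $\alpha_i = \mathbf{n}_i/n$, each summand becomes $\exp(n \Phi(\alpha) + O(\log n))$, with
$$\Phi(\alpha) \;=\; \sum_{i=1}^{d+1} H(\alpha_i) + \sum_{\{i,j\} \in E(K_{d+1})} Q(\alpha_i, \alpha_j),$$
and I would argue that $\Phi$ is globally maximised at the balanced profile $\alpha_{i,c} = 1/k$, using concavity of the Shannon entropy together with a Lagrange-multiplier / symmetry argument for $Q$. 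A direct computation at the balanced input gives $Q(\text{bal}, \text{bal}) = \log\frac{k-1}{k}$.

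Putting everything together yields
$$\ex X \;\le\; n^{O(1)}\, k^{n(d+1)} \Big(\tfrac{k-1}{k}\Big)^{n\binom{d+1}{2}} \;=\; n^{O(1)} \exp\!\bigg( n(d+1) \Big[ \log k - \tfrac{d}{2} \big( \log k - \log(k-1) \big) \Big] \bigg),$$
and the bracketed expression equals zero precisely when $d = u_k$ and is strictly negative for $d > u_k$. Since $u_k$ is irrational for every $k \ge 3$, the integrality of $d$ forces strict inequality under the hypothesis $d \ge u_k$, hence $\ex X \to 0$ exponentially fast; the degenerate case $k = 2$ (where $u_2 = 2$) can be handled separately via the a.a.s.\ existence of short odd cycles in random lifts of $K_{d+1}$. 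I expect the main technical obstacle to be the global optimisation of $\Phi$: the $d+1$ fiber profiles are coupled through the edge-wise $Q$ terms, so ruling out unbalanced critical points is non-trivial. A plausible route is a symmetrisation argument, fixing all but one fiber and showing that replacing the colour distribution of the free fiber by the uniform one never decreases $\Phi$; iterating over all fibers would drive the optimum to the balanced profile.
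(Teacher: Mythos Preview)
Your overall architecture coincides with the paper's: a first-moment computation on $X$, grouping by the fiber colour profiles, factoring over edges, optimising the rate function, and reaching the bound $\ex X \le n^{O(1)}\big(k^{d+1}((k-1)/k)^{\binom{d+1}{2}}\big)^n$. The handling of $k=2$ via odd cycles and the observation that $u_k\notin\ent$ for $k\ge3$ also match.

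You are right that the global optimisation of $\Phi$ is the crux, but the symmetrisation you propose does not work. Fixing $\alpha_j$ for $j\ne i$ and varying $\alpha_i$, the objective restricted to the $i$-th fiber is
\[
H(\alpha_i)+\sum_{j\ne i}\log\bigl(1-\inp{\alpha_i,\alpha_j}\bigr),
\]
and a gradient computation at $\alpha_i=(1/k,\dots,1/k)$ shows that this is not even a critical point on the simplex unless $\sum_{j\ne i}\alpha_{j,c}$ is constant in $c$. Replacing one fiber by the uniform distribution can therefore strictly \emph{decrease} $\Phi$ when the others are unbalanced, so there is no monotone iterated-replacement path to the balanced profile.

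The paper resolves the optimisation with two devices that exploit the complete-graph structure of $K_{d+1}$. First, AM--GM combined with Cauchy--Schwarz (Lemma~\ref{lem:technical_bound}) replaces the coupled edge sum $\sum_{vv'\in E}\log(1-\inp{a_v,a_{v'}})$ by a bound depending on the profiles only through the single scalar $\rho(\mbf a)=\sum_v\inp{a_v,a_v}$. Second, the resulting inequality is an Achlioptas--Naor-type bound for a \emph{rectangular} $(d+1)\times k$ row-stochastic matrix (Proposition~\ref{prop:achlioptas-naor_corollary}), obtained by padding to a square matrix and invoking the original Achlioptas--Naor theorem. This second step only goes through under the side condition $\tfrac{d^2-1}{d\log d}<2(k-1)$, which the paper then verifies separately from $d\ge u_k$; without a condition of this strength the balanced profile need not be the global maximiser, so the optimisation is the substantive content of the argument and not a routine symmetry consideration.
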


\begin{theorem} \label{thm:upper_bound}
Let $k\ge 3$, and define
\[
\ell_k = \frac{2(k-1)^3}{k(k-2)}\log(k-1) > 2(k-1)\log(k-1).
\]
Suppose $d < \ell_k$, and let $G$ be any fixed $d$-regular loopless multigraph. Then a.a.s.\ a random $n$-lift of $G$ is $k$-colourable.
\end{theorem}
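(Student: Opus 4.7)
The plan is to adapt the small subgraph conditioning argument of Kemkes, P\'erez-Gim\'enez, and Wormald~\cite{KPW10}, which established the analogous bound for the uniform random $d$-regular graph model (equivalently, the case when the base graph is the single-vertex bouquet $B_d$), to the random lift model $L_n(G)$ for an arbitrary $d$-regular base graph $G$. Define $X$ to be the number of proper $k$-colourings of $L_n(G)$ that are \emph{balanced in every fiber}, i.e.\ each of the $k$ colours appears exactly $n/k$ times in every fiber (assuming $k\mid n$; other residues are handled by standard rounding). The crucial structural advantage of the lift model is that the random perfect matchings on distinct edges of $G$ are mutually independent, so the probability that any fixed colouring is proper factors cleanly as a product over $E(G)$.

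For the first moment, the probability that the random matching on an edge $uv\in E(G)$ avoids monochromatic pairs depends only on the balanced colour-class sizes within fibers $u$ and $v$ and can be estimated via a bipartite permanent computation. Combining this per-edge probability with the multinomial count of balanced partitions and applying Stirling's formula yields $\ex[X]=\Theta(n^{-c})\,r^n$ with $r>1$ throughout the relevant range $d<\ell_k$; since the per-edge contribution coincides with the per-pair-of-half-edges contribution in the configuration-model analysis, the computation mirrors that in~\cite{KPW10}.

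The main obstacle is the second moment. For each pair $(\sigma,\tau)$ of balanced colourings one groups contributions by the joint type $(B^{(v)})_{v\in V(G)}$, where the $k\times k$ matrix $B^{(v)}$ records how many vertices in fiber $v$ are coloured $i$ by $\sigma$ and $j$ by $\tau$; each $B^{(v)}$ has all row and column sums equal to $n/k$. The joint properness probability again factors over $E(G)$, with the contribution of edge $uv$ depending on $(B^{(u)},B^{(v)})$ through a bipartite permanent estimate, and a multi-dimensional Laplace analysis locates the dominant contribution. The technical crux is to show that, throughout the range $d<\ell_k$, the resulting entropy-plus-interaction function is uniquely maximized at the barycentric point $B^{(v)}=(n/k^2)J$ for every $v$ (where $J$ is the all-ones matrix). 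Exploiting the product-over-edges structure, this reduces essentially to the single-edge optimization already resolved in~\cite{KPW10}, and the conclusion is $\ex[X^2]=O(\ex[X]^2)$. This reduction is what allows the result to extend to any $d$-regular base graph, not just $K_{d+1}$.

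Finally, for the small subgraph conditioning step, the counts $Y_\tau$ of short cycles in $L_n(G)$, suitably classified by their projections to closed walks in $G$, are asymptotically independent Poisson with explicit parameters $\lambda_\tau$. I would compute $\ex[X(Y_\tau)_r]$ by the same edge-factored method, extract a ratio of the form $\ex[X(Y_\tau)_r]/\ex[X]\to(\lambda_\tau(1+\delta_\tau))^r$, and verify that $\sum_\tau \lambda_\tau\delta_\tau^2$ is finite and equals $\log\lim \ex[X^2]/\ex[X]^2$. Invoking the small subgraph conditioning theorem then gives $\pr(X>0)\to 1$, and hence $L_n(G)$ is a.a.s.\ $k$-colourable, completing the proof.
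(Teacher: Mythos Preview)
Your proposal is correct and follows essentially the same approach as the paper: apply small subgraph conditioning to the number of fiberwise-balanced (``strongly equitable'') $k$-colourings, reduce the second-moment optimization to the Achlioptas--Naor inequality on the per-vertex overlap matrices (the paper decouples the edges via a Cauchy--Schwarz/AM--GM step and then applies Achlioptas--Naor to each $kA_v$), and condition on short-cycle counts. The only cosmetic difference is that the paper conditions on the total number $Z_j$ of $j$-cycles (using Friedman's formula to express $\lambda_j$ via the eigenvalues of $G$) rather than on cycles classified by their projected walk type; since the bias $\delta_\tau$ depends only on the cycle length, the two formulations are equivalent.
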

We remark that the base graph $G$ is allowed to have multiple edges in Theorem~\ref{thm:upper_bound}. We exclude half- and whole-loops to make some of the technical calculations in the paper simpler and since we are mainly interested in the case $G=K_{d+1}$. It is conceivable that the result also holds for graphs with loops (of either type). In fact, such an extension would imply the main result in~\cite{KPW10} for the uniform model $G_{n,d}$, by taking $G=B_d$.

Combining Theorems~\ref{thm:lower_bound} and~\ref{thm:upper_bound} for $G=K_{d+1}$ immediately yields the following result.
\begin{corollary}\label{cor:main}
For each $k \ge 3$:
\begin{enumerate}[(i)]
\item $u_{k-1} < \ell_k < u_k$,
\item if $d \in [u_{k-1}, \ell_k)$ then a.a.s. $\chi(L_n(K_{d+1})) = k$, and
\item if $d \in [\ell_k, u_k)$ then a.a.s. $\chi(L_n(K_{d+1})) \in \{k,
k+1\}$.
\end{enumerate}
\end{corollary}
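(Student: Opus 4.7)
The plan is to prove part~(i) first and then derive parts~(ii) and~(iii) as immediate consequences of part~(i) together with Theorems~\ref{thm:lower_bound} and~\ref{thm:upper_bound}. For (ii), if $d\in[u_{k-1},\ell_k)$, then $d\ge u_{k-1}$ triggers Theorem~\ref{thm:lower_bound} applied with $k-1$ in place of $k$, yielding a.a.s.\ $\chi(L_n(K_{d+1}))\ge k$, while $d<\ell_k$ triggers Theorem~\ref{thm:upper_bound} with $G=K_{d+1}$, yielding a.a.s.\ $\chi(L_n(K_{d+1}))\le k$. For (iii), if $d\in[\ell_k,u_k)$, the lower bound $\chi\ge k$ again follows from $d\ge\ell_k>u_{k-1}$ (by part~(i)) together with Theorem~\ref{thm:lower_bound}, and the upper bound $\chi\le k+1$ follows from $d<u_k<\ell_{k+1}$ (applying part~(i) with $k$ replaced by $k+1$) together with Theorem~\ref{thm:upper_bound} applied with $k+1$ in place of $k$.

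To establish part~(i), I would handle the two inequalities separately. The left inequality $u_{k-1}<\ell_k$ is immediate from the handy estimates already packaged into the statements of Theorems~\ref{thm:lower_bound} and~\ref{thm:upper_bound}:
\[
u_{k-1}<(2(k-1)-1)\log(k-1)=(2k-3)\log(k-1)<(2k-2)\log(k-1)<\ell_k.
\]
The right inequality $\ell_k<u_k$ is more delicate because those same estimates run in the wrong direction. My plan is to sandwich both quantities through the intermediate value $2(k-1)\log k$. On one side, comparing the Taylor series $-\log(1-1/k)=\sum_{n\ge 1}1/(nk^n)$ termwise with the geometric series $1/(k-1)=\sum_{n\ge 1}1/k^n$ gives $-\log(1-1/k)<1/(k-1)$, whence $u_k>2(k-1)\log k$. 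On the other side, writing $(k-1)^2=k(k-2)+1$ converts $\ell_k<2(k-1)\log k$ into $\log(k-1)<k(k-2)\cdot(-\log(1-1/k))$, and combining with $-\log(1-1/k)>1/k$ reduces this to the elementary inequality $\log(k-1)<k-2$, valid for all $k\ge 3$ as a special case of $\log x<x-1$ for $x>1$.

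The main obstacle is the right inequality of part~(i), $\ell_k<u_k$: the convenient bounds stated alongside the definitions of $u_k$ and $\ell_k$ are of the wrong type, so one must work with the logarithm more carefully. The trick above, using $2(k-1)\log k$ as an intermediate threshold, cleanly separates the proof into an analytic piece (Taylor expansion of $\log$) and a one-line algebraic piece, avoiding any messy estimation of the ratio $\log(k-1)/\log k$.
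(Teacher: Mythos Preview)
Your proposal is correct and follows the same logical structure that the paper intends: the paper states only that the corollary is obtained by ``combining Theorems~\ref{thm:lower_bound} and~\ref{thm:upper_bound} for $G=K_{d+1}$'' without further detail, and your derivation of (ii) and (iii) from (i) plus those two theorems is exactly that combination. You supply a full proof of (i), which the paper omits entirely; your argument for $u_{k-1}<\ell_k$ via the packaged bounds and for $\ell_k<u_k$ via the intermediate threshold $2(k-1)\log k$ together with the elementary inequalities $1/k<-\log(1-1/k)<1/(k-1)$ and $\log(k-1)<k-2$ is clean and correct.
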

In particular, noting that $u_k < (2k-1)\log k$ and $\ell_k > 2(k-1)\log(k-1)$ we can rewrite the above corollary in a slightly weaker but simpler form.
\begin{corollary}\label{cor:main2}
For each $d \ge 3$, let $k_d = \min\{k : d < 2k\log k\}$.
\begin{enumerate}[(i)]
\item Then a.a.s. $\chi(L_n(K_{d+1})) \in \{k_d, k_d+1\}$.
\item If moreover $d>(2k_d-1)\log k_d$, then a.a.s. $\chi(L_n(K_{d+1})) = k_d+1$.
\end{enumerate}
\end{corollary}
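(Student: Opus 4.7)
The plan is to deduce Corollary~\ref{cor:main2} directly from Theorems~\ref{thm:lower_bound} and~\ref{thm:upper_bound}, using the two explicit estimates flagged in the excerpt just before the corollary, namely $u_k < (2k-1)\log k$ and $\ell_k > 2(k-1)\log(k-1)$. These are precisely the conversions that let me replace the awkward thresholds $u_k,\ell_k$ by the clean quantity $2k\log k$ used to define $k_d$.

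First I would unpack the definition $k_d = \min\{k : d < 2k\log k\}$: on the one hand $d < 2k_d\log k_d$, and on the other hand, by minimality, $d \ge 2(k_d-1)\log(k_d-1)$. The hypothesis $d \ge 3$ forces $k_d \ge 3$ (since $4\log 2 < 3$), so the indices $k_d-1$, $k_d$, $k_d+1$ all lie in the ranges where Theorems~\ref{thm:lower_bound} and~\ref{thm:upper_bound} apply.

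For part~(i), I would obtain the upper bound by applying Theorem~\ref{thm:upper_bound} with $G = K_{d+1}$ and $k$ replaced by $k_d+1$: the hypothesis $d < \ell_{k_d+1}$ holds because $\ell_{k_d+1} > 2k_d\log k_d > d$. The matching lower bound comes from Theorem~\ref{thm:lower_bound} with $k$ replaced by $k_d-1$: the hypothesis $d \ge u_{k_d-1}$ is verified by
\[
u_{k_d-1} < (2k_d-3)\log(k_d-1) < 2(k_d-1)\log(k_d-1) \le d,
\]
where the first step uses $u_k < (2k-1)\log k$ with $k=k_d-1$ and the third step is the minimality of $k_d$. Together these give $\chi(L_n(K_{d+1})) \in \{k_d, k_d+1\}$ a.a.s. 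For part~(ii), the extra hypothesis $d > (2k_d-1)\log k_d$ combines with $u_{k_d} < (2k_d-1)\log k_d$ to yield $d > u_{k_d}$, so Theorem~\ref{thm:lower_bound} applied with $k=k_d$ upgrades the lower bound to $\chi \ge k_d+1$, matching the upper bound from~(i).

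There is essentially no obstacle here: everything has been arranged in advance so that the argument reduces to the short chain of inequalities above comparing the integer $k_d$ to the real-valued quantities $u_{k_d-1}$, $\ell_{k_d+1}$, and $u_{k_d}$. The only nontrivial ingredients—the two analytic bounds $u_k < (2k-1)\log k$ (following from $\log(1+1/(k-1)) > 2/(2k-1)$) and $\ell_k > 2(k-1)\log(k-1)$ (following from $(k-1)^2 > k(k-2)$)—are elementary and have already been stated.
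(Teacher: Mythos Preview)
Your proof is correct and follows essentially the same approach as the paper, which simply notes that Corollary~\ref{cor:main2} is a weaker reformulation of Corollary~\ref{cor:main} using the bounds $u_k < (2k-1)\log k$ and $\ell_k > 2(k-1)\log(k-1)$. You spell out the chain of inequalities explicitly (going straight to Theorems~\ref{thm:lower_bound} and~\ref{thm:upper_bound} rather than via Corollary~\ref{cor:main}), but the content is the same.
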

As observed in similar discussions in~\cite{AN05} and~\cite{KPW10}, `roughly half' of the integer values $d\ge3$ satisfy the extra condition in part~(ii) of the corollary.
It is conceivable that, by using similar methods as in~\cite{CEH13}, one could extend the one-point concentration conclusion of part~(ii) to hold for every sufficiently large $d$.

\paragraph{Outline of the argument.}
The lower bound on the chromatic number follows from bounding the expected number of $k$-colourings $X$ of a random lift $L_n(G)$ for the case $G=K_{d+1}$. For the upper bound, we use the small subgraph conditioning method introduced by Robinson and Wormald (see~\cite{JLR00}, Chapter~9, and~\cite{Wor99} for a full exposition of the method) applied to the number $Y$ of strongly equitable $k$-colourings of $L_n(G)$ (that is, $k$-colourings in which all colours appear the same number of times in any fiber).
Our argument requires an accurate estimate of $\ex X$, $\ex Y$, $\ex Y^2$ and some additional joint factorial moments of the short-cycle counts of $L_n(G)$.
In order to determine the exponential behaviour of each of these moments, we must solve a non-trivial optimization problem over some bounded polytope.

Somewhat surprisingly, the optimization problem that arises in the calculation of the first moment $\ex X$ has a ``second moment flavour'', and is more involved than the corresponding one for $\ex Y^2$. Here is an intuitive explanation for this fact: in order to compute $\ex X$ we need to count triples $(f,g,L)$ where $L$ is a $d$-regular graph of order $(d+1)n$, $f:L\to K_{d+1}$ is a covering map (which can also be seen as a proper $(d+1)$-colouring of $L$ with the additional property that each vertex sees all the $d+1$ colours in its closed neighbourhood) and $g$ is a proper $k$-colouring of $L$.
So this can be seen as a lopsided version of a similar counting problem that arises when computing the second moment of the number of $k$-colourings in the uniform model $G_{n,d}$ (see~e.g.~\cite{AM04,KPW10}).
To compute $\ex X$, we solve an optimization problem over the set of $(d+1)\times k$ stochastic matrices that generalizes a result of Achlioptas and Naor~\cite{AN05} for square stochastic matrices.
We believe that our generalization is of independent interest for future applications.
On the other hand, the corresponding optimization problem in the calculation of $\ex Y^2$ can be reduced (after some work) to the original optimization over square stochastic matrices in~\cite{AN05}. (This relies on the fact that $Y$ concerns only strongly balanced colourings.)
Finally, note that our analysis of $\ex X$ makes use of the fact that $G=K_{d+1}$ (since our claims are false for general base graphs), whereas our calculations for $\ex Y$ and $\ex Y^2$ generalize to any loopless $d$-regular graph $G$.

Next, we compute the polynomial factors in our estimates of $\ex Y$ and $\ex Y^2$. This can be done using (at least) two different methods, which we apply several times in the argument. One is the saddle-point method, which uses complex integration over an appropriate contour to obtain an asymptotic estimate of the coefficients of a generating function. The other one is a version of the Laplace summation method over lattices given in~\cite{GJR10}. We reformulate the result in~\cite{GJR10} in terms of counting maximal forests in an auxiliary graph that encodes the summation constraints. This reformulation unveils the role of the eigenvalues of this auxiliary graph, in view of the well-known matrix-tree theorem, and simplifies the calculations. We believe that our version of this tool can be useful in future applications.

Finally, the use of the small subgraph conditioning method requires us to investigate how the presence of short cycles in the random lift $L_n(G)$ affects the number of colourings. (Here, $G$ is any fixed $d$-regular graph.)
This requires the enumeration of non-backtracking closed walks in $G$, for which we use algebraic tools developed by Friedman~\cite{Fri08} (see also~\cite{GJR10}).

\paragraph{Notation:}
We adopt the following notations and conventions throughout this article. For any vector (or matrix) $y=(y_i)_{i\in I}$ with nonnegative integer entries and $x=\sum_{i\in I}y_i$, we define
\begin{equation}
y! = \prod_{i\in I} y_i!
\and
\binom{x}{y} = \frac{x!}{y!} = \frac{x!}{\prod_{i\in I} y_i!}
\label{eq:factorial}
\end{equation}
We use the convention $0^0=1$ and $0\log0=0$. For any two vectors $x=(x_i)_{i=1}^m, y=(y_i)_{i=1}^m \in \real^m$, $\langle x,y\rangle$ denotes the usual inner-product $\sum_{i=1}^m x_iy_i$.

\paragraph{Structure:}
In Section~\ref{sec:main} we prove Theorems~\ref{thm:lower_bound} and \ref{thm:upper_bound} in the case that $k$ divides $n$, assuming all the relevant moment estimates. In Section~\ref{sec:tools} we state and prove two useful propositions which may be of independent interest: first, an extension of an optimization result of Achlioptas and Naor~\cite{AN05} from square to rectangular stochastic matrices, and second, a reformulation of a result of Greenhill, Janson and Ruci\'nski~\cite{GJR10} to compute Laplace summations over lattices in terms of counting maximal forests.
In Section~\ref{sec:first} we provide first-moment calculations for both $X$ and $Y$ including the ``second-moment flavour'' optimization involved in the bounding of $\ex X$.
In Section~\ref{sec:second} we obtain a precise estimate for $\ex Y^2$. Then in Section~\ref{sec:joint} we compute the joint factorial moments required to use the small subgraph conditioning method. Finally, we conclude with Section~\ref{sec:extension} in which we extend our results to the case where $k$ does not divide $n$.

%%%
% Main Argument
%%%

\section{Proofs of Theorems~\ref{thm:lower_bound} and \ref{thm:upper_bound} (for $n$ divisible by $k$)} \label{sec:main}

Let $X$ be the number of $k$-colourings of a random $n$-lift of $K_{d+1}$. We prove Theorem~\ref{thm:lower_bound} as a consequence of Proposition~\ref{prop:EX}.

\begin{proposition}\label{prop:EX}
Let $d$ be an integer and suppose $k$ is an integer satisfying $3 \le k \le d+1$ and $\frac{d^2-1}{d \log d} < 2 (k-1)$. Then there is a constant $M$ such that
\[
\ex X = O\left(n^M\right) \left(\frac{(k-1)^d}{k^{d-2}}\right)^{(d+1)n/2}.
\]
\end{proposition}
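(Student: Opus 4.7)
The plan is to compute $\ex X$ by summing over the \emph{colour profile} of a $k$-colouring of the lift. Given any $k$-colouring $\phi$ of the $(d+1)n$ vertices, let $a_{i,j}$ be the number of vertices in the fiber of $i\in V(K_{d+1})$ that receive colour $j\in[k]$; the resulting matrix $A=(a_{i,j})$ is a $(d+1)\times k$ matrix with nonnegative integer entries and all row sums equal to $n$. The number of colourings with profile $A$ is $\prod_i \binom{n}{a_{i,1},\ldots,a_{i,k}}$, and, for each edge $\{i,i'\}$ of $K_{d+1}$, the probability that the random perfect matching between the two fibers is compatible with $\phi$ depends only on the marginals $(a_{i,j})_j$ and $(a_{i',j'})_{j'}$. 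This probability can be written as a sum over \emph{interface matrices} $B^{(i,i')}=(b^{(i,i')}_{j,j'})$ with zero diagonal and with row sums $(a_{i,j})_j$ and column sums $(a_{i',j'})_{j'}$, where $b^{(i,i')}_{j,j'}$ records the number of matched pairs of colours $(j,j')$.

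Substituting these expressions yields $\ex X$ as a double sum over $A$ and the $B^{(i,i')}$ of a product of factorials. Applying Stirling's formula converts this into a sum whose general term takes the form $\exp(n\,\Phi(\alpha,\beta))$ times polynomial factors, where $\alpha=A/n$ and $\beta^{(i,i')}=B^{(i,i')}/n$; hence the exponential rate of $\ex X$ is $n\max\Phi$ over the appropriate polytope of stochastic matrices. The polynomial factor $O(n^M)$ will follow from bounding the number of terms in the summation by a polynomial in $n$ and collecting the polynomial prefactors from Stirling.

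The main obstacle is solving this maximization. I would first fix $\alpha$ and carry out the inner optimization over each $\beta^{(i,i')}$ separately: this is an entropy maximization with linear marginal constraints together with a zero-diagonal constraint, whose unique solution can be determined explicitly via Lagrange multipliers and has a clean quasi-product form. Substituting back yields a function of $\alpha$ alone, to be maximized over $(d+1)\times k$ stochastic matrices. This is the ``second-moment flavour'' optimization highlighted in the outline: the inner entropy contribution plays the role of the overlap-matrix entropy that arises in second-moment calculations for $k$-colourings of $G_{n,d}$, studied by Achlioptas and Naor~\cite{AN05} in the case of square doubly stochastic matrices. To handle our rectangular setting I would invoke the generalization developed in Section~\ref{sec:tools}, which, under the hypothesis $\frac{d^2-1}{d\log d}<2(k-1)$, guarantees that the maximum is attained at the uniform matrix $\alpha_{i,j}=1/k$. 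Evaluating $\Phi$ at this uniform optimum then yields the claimed exponential $\left((k-1)^d/k^{d-2}\right)^{(d+1)n/2}$.
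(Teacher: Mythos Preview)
Your overall structure matches the paper's: write $\ex X$ as a sum over profiles $(\mbf a,\mbf b)$, apply Stirling to isolate an exponential $e^{nf(\mbf a,\mbf b)}$ times a polynomial factor, and maximize $f$; the polynomial $O(n^M)$ then drops out from the number of terms and the Stirling prefactors. Two points in the optimization, however, are not as you describe them.

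First, a minor one: with the \emph{full} marginal constraints on $\beta^{(i,i')}$ (row sums $\alpha_{i,j}$, column sums $\alpha_{i',j'}$, zero diagonal), the Lagrange solution does \emph{not} have an explicit closed form; the zero-diagonal condition spoils the usual product structure. The paper sidesteps this by \emph{relaxing} the constraints on each $B_e$ to just ``nonnegative entries summing to $1$''. For this relaxed problem Gibbs' inequality gives the explicit maximizer $b^*_{e,i,i'}=\alpha_{v,i}\alpha_{v',i'}/z_e$ and maximum value $\log z_e=\log(1-\langle \alpha_v,\alpha_{v'}\rangle)$. Since only an upper bound on $f$ is needed, and the global maximum is eventually attained at the uniform point (which satisfies all the original constraints), the relaxation is harmless.

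Second, and this is the genuine gap: after the inner optimization you obtain
\[
h(\mbf\alpha)+\sum_{\{v,v'\}\in E(K_{d+1})}\log\bigl(1-\langle \alpha_v,\alpha_{v'}\rangle\bigr),
\]
a sum of $\binom{d+1}{2}$ different logarithms, each depending on a pairwise inner product of rows. This is \emph{not} in the form handled by the rectangular Achlioptas--Naor generalization in Section~\ref{sec:tools}, which applies only to functions of the type $\frac{1}{q}h(M)+c\log(\text{affine in }\rho(M))$. A separate reduction is required: the paper's Lemma~\ref{lem:technical_bound} uses AM--GM over all edges to replace the product of the $(1-\langle\alpha_v,\alpha_{v'}\rangle)$ by a power of their average, and then Cauchy--Schwarz on $c=\sum_v\alpha_v$ to bound that average by an affine function of $\rho(\mbf\alpha)$. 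This step is precisely where the complete-graph structure of $K_{d+1}$ enters. Only after this bound does Proposition~\ref{prop:achlioptas-naor_corollary} apply (with $q=d+1$ and $c=d/2$, the hypothesis $\frac{d^2-1}{d\log d}<2(k-1)$ being exactly what makes $d/2<\frac{k-1}{d}c_{d+1}$). Your proposal jumps directly from the inner optimization to Proposition~\ref{prop:achlioptas-naor_corollary}, omitting this bridge.
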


\begin{proof}[Proof of Theorem~\ref{thm:lower_bound}]
It suffices to prove the statement for $d = \lceil u_k \rceil$. First note that if $k=2$ then $d = 2$. Using standard arguments one can show that a.a.s.\ the random lift of $K_3$ contains some odd cycles, and thus the statement follows.

Now fix $k$ such that $3 \le k \le d+1$. In that case, $u_k\notin\ent$ so $d>u_k$. We claim that
\begin{enumerate}
\item[(i)]
$- \log\log\left(\frac{1}{1-1/k}\right) > \log k - \frac{5}{8k} > 0$ and
\item[(ii)]
$\log k < (k-1) \left( \log\log k^2 - \frac{5}{8k}\right)$.
\end{enumerate}
In view of these facts,
\begin{align*}
\frac{d^2-1}{d \log d} < \frac{d}{\log d} < \frac{u_k+1}{\log u_k}  &< \frac{2k\log k}{\log\log k^2 - \log\log\left(\frac{1}{1-1/k}\right)}\\
	&< \frac{2k\log k}{\log\log k^2 + \log k - \frac{5}{8k}}\\
	&< \frac{2k\log k}{\log k + (\log k)/(k-1)} = 2(k-1).
\end{align*}
Then we can apply Proposition~\ref{prop:EX} and conclude that $\ex X = O(n^M) \left(\frac{(k-1)^d}{k^{d-2}}\right)^{(d+1)n/2}$. \linebreak Moreover, since $d>u_k$, $\frac{(k-1)^d}{k^{d-2}} <1$ and thus $\ex X=o(1)$.
\end{proof}

Let $L$ be an $n$-lift of a fixed graph $G=G(V,E)$ with covering map $\Pi$. We call a proper $k$-colouring of $L$ {\em strongly equitable} if for every $v\in V$, the set $\Pi^{-1}(v)$ is equitably coloured: that is, each colour is assigned to exactly $n/k$ vertices in $\Pi^{-1}(v)$. This requires $n$ to be divisible by $k$, which we always assume when discussing strongly equitable colourings.

Let $Y$ be the number of strongly equitable $k$-colourings of a random lift of $G$, where $G$ is any fixed $d$-regular graph (not necessarily $G=K_{d+1}$). Let $A$ be the adjacency matrix of $G$, and let $\alpha_1, \ldots, \alpha_{|V|}$ be the eigenvalues of $A$. (Note that several arguments will contain other matrices named $A$ with various subscripts. When we refer to $A$ alone, it will exclusively refer to the adjacency matrix of $G$.)

 For ease of notation, throughout the paper we use
\begin{equation}
\lambda = (k-1)^2+1 \quad \text{and} \quad \lambda' = (k-1)^2-1. \label{eq:lambdas}
\end{equation}

\begin{proposition}\label{prop:EY}
Let $d \ge 2$ and $k \ge 3$ be integers. There is a constant $C_1 = C_1(d,k)$ such that
\[
\ex Y \sim C_1 \left( 2\pi n\right)^{-(k-1)|V|/2} \left( k^{|V|} \left(\frac{k-1}{k}\right)^{|E|} \right)^n,
\]
where
\[ C_1 = k^{k|V|/2} \left(\frac{(k-1)^2}{k(k-2)}\right)^{(k-1)|E|/2}.
\]
\end{proposition}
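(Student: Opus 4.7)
The plan is to factor $\ex Y$ over the vertices and edges of $G$ and then estimate each piece. Conditioning on the colouring, each of the $|V|$ fibers contributes $n!/(m!)^k$ equitable colour-class assignments (where $m=n/k$), and given these assignments, each of the $|E|$ edges contributes independently and with identical distribution a factor $p$, namely the probability that a uniform random perfect matching between two equitably coloured fibers respects the colour classes. This yields
\[
\ex Y = \left(\frac{n!}{(m!)^k}\right)^{|V|}p^{|E|}.
\]
A direct application of Stirling to the fiber factor gives $(n!/(m!)^k)^{|V|}\sim k^{k|V|/2}(2\pi n)^{-(k-1)|V|/2}(k^n)^{|V|}$, producing both the full polynomial factor $(2\pi n)^{-(k-1)|V|/2}$ appearing in the statement and the term $k^{k|V|/2}$ in $C_1$, as well as the exponential $(k^{|V|})^n$.

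It remains to show that $p\sim \bigl((k-1)^2/(k(k-2))\bigr)^{(k-1)/2}\bigl((k-1)/k\bigr)^n$. To this end I parametrize perfect matchings between two equitably coloured fibers by the $k\times k$ nonnegative integer \emph{intersection matrix} $N=(N_{ij})$, where $N_{ij}$ counts the number of matching edges between colour class $i$ on one side and colour class $j$ on the other. A standard multinomial count gives the number of matchings with prescribed $N$ as $(m!)^{2k}/\prod_{ij}N_{ij}!$. Properness corresponds to $N_{ii}=0$ for all $i$, while the marginals force every row and column sum of $N$ to equal $m$; hence
\[
p=\frac{1}{n!}\sum_{N}\frac{(m!)^{2k}}{\prod_{ij}N_{ij}!},
\]
the sum being taken over admissible $N$.

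I would evaluate this sum via the lattice Laplace summation technique reformulated in Section~\ref{sec:tools}. The unique continuous maximizer is $N^{*}_{ij}=m/(k-1)$ for $i\ne j$, and Stirling applied at $N^{*}$ (after division by $n!$) produces a factor $(2\pi n)^{-d/2}\bigl((k-1)/k\bigr)^n$ times an explicit constant, where $d=k^{2}-3k+1$ is the dimension of the lattice of admissible integer deviations from $N^{*}$ (that is, $k^{2}$ ambient coordinates minus the $3k-1$ independent linear constraints coming from the zero-diagonal and row/column sums). Laplace summation contributes back a compensating factor $(2\pi n)^{d/2}$, so all polynomial powers of $n$ cancel exactly and $p$ is asymptotic to a pure constant times $\bigl((k-1)/k\bigr)^n$. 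That constant is governed by the Hessian of $N\mapsto -\sum_{ij}N_{ij}\log N_{ij}$ restricted to the constraint lattice, together with the corresponding lattice covolume.

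The main technical obstacle is the clean evaluation of this residual constant. Through the matrix-tree reformulation of the Laplace formula, it is expressible via the spectrum of a weighted Laplacian on an auxiliary graph that encodes the constraints---concretely, a weighted version of $K_{k,k}$ with a perfect matching removed, with row/column constraints as vertices and off-diagonal entries of $N$ as edges. The high symmetry of this auxiliary graph (its adjacency matrix is a tensor product expressible in terms of $J_k-I_k$) permits an explicit diagonalization, and after simplification the constant reduces to $\bigl((k-1)^2/(k(k-2))\bigr)^{(k-1)/2}$. Raising to the $|E|$ power and combining with the fiber estimate yields the claim.
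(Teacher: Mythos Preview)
Your proposal is correct and follows essentially the same route as the paper: both reduce to a Laplace summation whose auxiliary constraint graph is (per edge) $K_{k,k}$ minus a perfect matching, and both extract the constant via the matrix-tree theorem applied to that graph. The only difference is that you exploit the factorization $\ex Y=(n!/(m!)^k)^{|V|}p^{|E|}$ upfront and run the Laplace argument on a single edge, whereas the paper carries all edges together in one sum over $\mathbf b=(b_e)_{e\in E}$; since the paper's auxiliary graph $\Gamma$ then decomposes into $|E|$ disjoint copies of $K_{k,k}-M$ anyway, the two computations are equivalent.
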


\begin{proposition}\label{prop:expectation_y_squared}
Suppose $d < \ell_k$. Then there is a constant $C_2=C_2(d,k)$ such that
\[
\ex Y^2 \sim C_2 (2\pi n)^{-(k-1)|V|} \left( k^{|V|} \left(\frac{k-1}{k}\right)^{|E|} \right)^{2n}
\]
where
\[ C_2 =
\frac{ k^{(k^2-k+1)|V|}
(k-1)^{(2k^2-2k)|E|}
}{(\lambda)^{\frac{1}{2}(k-1)^2|E|}
(\lambda')^{\frac{1}{2}(k^2-1)|E|}
h(d,k)^{\frac{(k-1)^2}{2}}}
\]
and
\begin{equation}\label{eq:det_hessian}
h(d,k) = \left(\frac{k^2}{\lambda\lambda'}\right)^{|V|} \prod_{i=1}^{|V|} (\lambda\lambda' + d - \alpha_i(k-1)^2)
\end{equation}
\end{proposition}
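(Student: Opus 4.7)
The plan is to compute $\ex Y^2$ by summing over ordered pairs $(c_1,c_2)$ of strongly equitable $k$-colourings of the lift, extracting the exponential rate by a convex optimisation and the polynomial prefactor by Laplace summation. For each vertex $v\in V$, I would parametrise the pair by the $k\times k$ non-negative integer matrix $Q_v=(q^v_{ij})$, where $q^v_{ij}$ counts the vertices in the fiber of $v$ coloured $i$ by $c_1$ and $j$ by $c_2$. Strong equitability forces every row and column of $Q_v$ to sum to $n/k$, leaving $(k-1)^2$ free entries per fiber. The number of colouring-pairs on each fiber compatible with $Q_v$ is $\binom{n}{Q_v}$, and since the random perfect matchings on distinct edges of $G$ are independent, the probability that both $c_1$ and $c_2$ are proper factors as $\prod_{uv\in E} p_{uv}(Q_u,Q_v)$, where $p_{uv}$ counts bijections between the two fibers that avoid any pair sharing a $c_1$- or $c_2$-colour --- a contingency-table sum with a block of forbidden entries. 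This reduces $\ex Y^2$ to a weighted sum over admissible families of matrices.

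To extract the exponential rate, I rescale $q^v_{ij}=nx^v_{ij}$ (so that $kx^v$ ranges over the Birkhoff polytope), apply Stirling, and obtain an objective $\Phi(\{x^v\})$. The claim is that $\Phi$ attains its unique global maximum at the flat matrix $x^v=J/k^2$ for every $v$, whose value reproduces the announced factor $(k^{|V|}((k-1)/k)^{|E|})^{2n}$. Because strong equitability pins the marginals of every $Q_v$ to be uniform, the per-edge contribution to $\Phi$ depends only on a single $k\times k$ doubly-stochastic matrix associated to that edge, and so the optimisation decouples edge-by-edge and reduces to the square stochastic matrix problem solved by Achlioptas and Naor~\cite{AN05}. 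The condition $d<\ell_k$ is calibrated exactly so that their flat-matrix maximiser wins over all competitors, in parallel with the uniform random $d$-regular analysis in~\cite{KPW10}.

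To extract the polynomial prefactor, I expand $\Phi$ to second order around the optimum and apply the lattice Laplace summation tool from Section~\ref{sec:tools} on the $(k-1)^2|V|$-dimensional space of deviations. Stirling on the multinomials contributes $(2\pi n)^{-(k^2-1)|V|/2}$, and the Laplace summation contributes $(2\pi n)^{(k-1)^2|V|/2}$; combining them yields the announced $(2\pi n)^{-(k-1)|V|}$. The remaining constants come from: (a) constant prefactors in the Stirling expansion of the multinomials and edge-permanents, producing the explicit $k$- and $(k-1)$-powers, together with the $\lambda$- and $\lambda'$-factors in $C_2$ that arise as the two distinct eigenvalues of a local ``edge-coupling'' operator on the subspace of marginal-preserving deviations at each edge; and (b) the determinant of the Hessian $H$ of the log-integrand at the optimum. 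Using the forest-counting reformulation of~\cite{GJR10} (and the matrix-tree theorem), $\det H$ is expressed as a product over the eigenvalues $\alpha_i$ of the adjacency matrix $A$ of a fixed linear expression, yielding $\prod_i(\lambda\lambda'+d-\alpha_i(k-1)^2)$; the normalisation $(k^2/(\lambda\lambda'))^{|V|}$ in $h(d,k)$ absorbs the Jacobian of the change of variables from the $Q_v$ entries to the free parameters.

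The main technical obstacle is twofold. First, I must rigorously establish that $x^v=J/k^2$ is the \emph{unique} global maximiser of $\Phi$ on the whole polytope, not merely a critical point: this requires combining the single-edge Achlioptas--Naor optimum with uniform control of the vertex-entropy contributions across $G$, which is delicate because the cross-edge coupling is only genuinely decoupled after marginals are fixed. Second, I must correctly identify the spectrum of $H$ and the multiplicities of $\lambda$ and $\lambda'$: the $k\times k$ matrix directions at each fiber split under the row/column-sum constraints into invariant subspaces on which the edge-coupling operator acts with eigenvalues $\lambda$ and $\lambda'$ in definite multiplicities, and tracking these carefully alongside the spectrum of $A$ and the Jacobian corrections is where the bookkeeping produces the precise exponents of $\lambda$, $\lambda'$ and the $(k^2/(\lambda\lambda'))^{|V|}$ normalisation in $C_2$.
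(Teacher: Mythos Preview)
Your proposal is essentially correct and follows the same two-layer architecture as the paper: parametrise by the overlap matrices $Q_v$ (the paper's $A_v$), absorb the edge matching counts into an inner sum (the paper's $\mbf B$ variables), optimise the exponential rate via Achlioptas--Naor, and recover the polynomial prefactor by Laplace summation over the $(k-1)^2|V|$-dimensional space of $Q_v$'s subject to the doubly-stochastic constraints. Your dimension count for the $(2\pi n)$ power and your identification of where $\lambda,\lambda'$ and the product $\prod_i(\lambda\lambda'+d-\alpha_i(k-1)^2)$ come from are both on target.

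Two points where the paper is more specific than your sketch. First, your phrase ``the per-edge contribution to $\Phi$ depends only on a single $k\times k$ doubly-stochastic matrix'' is not accurate: after optimising out $\mbf B$, the edge term is $\log(1-2/k+\langle A_v,A_{v'}\rangle)$, which depends on \emph{both} endpoint matrices. The paper decouples this not edge-by-edge but \emph{vertex-by-vertex}, via Cauchy--Schwarz on $\langle A_v,A_{v'}\rangle$, reducing to $\frac{d}{2}\log(1-2/k+\rho(A_v))$ per vertex; only then does Achlioptas--Naor apply (to $kA_v$, one vertex at a time). Second, you are vague on how to evaluate the ``contingency-table sum / edge-permanent'' to the precision needed for the constant. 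The paper does this inner sum over $\mbf B$ by the saddle-point method (complex contour integrals in Section~\ref{ssec:inner}), and it is in the spectral analysis of the resulting quadratic form that the eigenvalues $\lambda=(k-1)^2+1$ and $\lambda'=(k-1)^2-1$ with their respective multiplicities appear. Also, in Proposition~\ref{prop:laplacian_summation_gamma} the forest count $\tau(\Gamma)$ and the Hessian determinant $\det(-H|_{\mathbb V})$ are two separate factors in the denominator; in your write-up you conflate them slightly --- $\tau(\Gamma)=(k^{2k-2})^{|V|}$ comes from spanning trees of $K_{k,k}$, while $h(d,k)^{(k-1)^2}=\det(-H|_{\mathbb V})$ is computed directly from the Hessian using the Kronecker structure $H=(\text{scalar})I_{|V|}\otimes I_{k^2}+(\text{scalar})A\otimes I_{k^2}$.
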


For fixed $j \ge 3$, denote the number of $j$-cycles in a random lift by $Z_j$.

\begin{proposition}\label{prop:all_joint_moments}
For $i=1,\ldots, |V|$, let ${\beta_i}^+$ and ${\beta_i}^-$ denote the roots of the quadratic $x^2-\alpha_ix+d-1 = 0$. That is,
\begin{equation}\label{eq:beta_def}
{\beta_i}^+ = \tfrac{1}{2}\alpha_i + \sqrt{\tfrac{1}{4}\alpha_i^2-(d-1)} \quad \text{and} \quad \beta_i^- = \tfrac{1}{2}\alpha_i - \sqrt{\tfrac{1}{4}\alpha_i^2-(d-1)}.
\end{equation}
For all $j \ge 3$ and $p_3, \ldots, p_j$, each a non-negative integer,
\begin{equation}
\label{eq:fac_moms}
\frac{\ex(Y[Z_3]_{p_3}\cdots[Z_j]_{p_j})}{\ex(Y)} \sim \prod_{\ell=3}^j (\lambda_\ell(1+\delta_\ell))^{p_\ell}
\end{equation}
where
\begin{equation}\label{eq:sscm_constants}
\lambda_j = \frac{(|E|-|V|)(1+(-1)^j)+\sum_{i=1}^{|V|}((\beta_i^+)^j+(\beta_i^-)^j)}{2j}, \quad
\delta_j = \frac{(-1)^j}{(k-1)^{j-1}},
\end{equation}
and $[A]_b$ indicates the falling factorial moment $A(A-1)\cdots(A-b+1)$.
\end{proposition}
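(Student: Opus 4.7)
The plan is to rewrite the joint factorial moment in~\eqref{eq:fac_moms} as an expected factorial moment of short cycle counts under a \emph{planted colouring} distribution, and then to compute this using the spectral description of non-backtracking walks in $G$. In the planted distribution we first sample a uniformly random strongly equitable $k$-colouring $c$ of $V(G)\times[n]$, and then sample $L_n(G)$ uniformly conditioned on $c$ being a proper colouring; equivalently, for each edge $e=uv\in E(G)$, the matching $M_e$ is an independent uniform random proper matching (i.e., avoiding same-colour pairs) between $f^{-1}(u)$ and $f^{-1}(v)$. By symmetry of the strongly equitable constraint, the marginal law of $L_n(G)$ has density proportional to $Y(L_n(G))$, so $\ex^{\mathrm{planted}}[X]=\ex(YX)/\ex(Y)$ for any $X\ge 0$; it therefore suffices to show $\ex^{\mathrm{planted}}\bigl([Z_3]_{p_3}\cdots[Z_j]_{p_j}\bigr)\to\prod_\ell(\lambda_\ell(1+\delta_\ell))^{p_\ell}$.

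First I would compute $\ex^{\mathrm{planted}}[Z_\ell]$. A potential $\ell$-cycle in $L_n(G)$ is determined by a closed non-backtracking walk $w=(v_0,e_1,v_1,\ldots,e_\ell,v_0)$ in $G$ together with vertices $x_i\in f^{-1}(v_i)$, modulo the $2\ell$-fold cyclic/orientation action. By the spectral description of the non-backtracking matrix $B$ of $G$ (cf.~\cite{Fri08,GJR10})---with eigenvalues $\pm1$ each of multiplicity $|E|-|V|$ and the pairs $\beta_i^\pm$ from~\eqref{eq:beta_def}---one has $\tr(B^\ell)=(|E|-|V|)(1+(-1)^\ell)+\sum_i((\beta_i^+)^\ell+(\beta_i^-)^\ell)=2\ell\lambda_\ell$, yielding $(\lambda_\ell+o(1))\,n^\ell$ potential $\ell$-cycles in total. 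For any fixed potential cycle $C$ I would factor
\[
\pr^{\mathrm{planted}}[C\subseteq L_n(G)]=\ex_c\bigl[\mathbf{1}[c\text{ proper on }C]\cdot\pr[C\subseteq L_n(G)\mid c]\bigr].
\]
Given that $c$ is proper on $C$, each forced edge of $C$ appears in its uniform proper matching with asymptotic probability $k/(n(k-1))$ (a proper edge has $n(k-1)/k$ admissible partners), so $\pr[C\subseteq L_n(G)\mid c]\sim(k/(n(k-1)))^\ell$. Since only $\ell$ vertices of the equitable $c$ are involved, the joint distribution of their colours is asymptotically uniform on $[k]^\ell$, giving $\pr[c\text{ proper on }C]\sim((k-1)^\ell+(-1)^\ell(k-1))/k^\ell$. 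The product collapses to $(1+\delta_\ell)/n^\ell$, and summing over potential cycles yields $\ex^{\mathrm{planted}}[Z_\ell]\to\lambda_\ell(1+\delta_\ell)$.

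For the joint factorial moment I would then sum over ordered tuples of distinct potential cycles of the specified lengths. The dominant contribution comes from tuples of pairwise vertex-disjoint cycles: in this case the matching-edge conditional probabilities factorize over $E(G)$ and the colouring events factorize too (since the $\sum_\ell p_\ell\ell$ vertices involved in $L_n(G)$ are distinct), yielding exactly $\prod_\ell(\lambda_\ell(1+\delta_\ell))^{p_\ell}$. The hard part will be to show that tuples containing two cycles sharing at least one lift vertex contribute only $O(1/n)$ to the sum. This reduces, via the non-backtracking framework of~\cite{Fri08,GJR10}, to a uniform enumeration of short walk configurations in the base graph $G$, combined with the basic estimate that the planted probability of a prescribed set of $m$ forced matching edges all being realized is $O(n^{-m})$. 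The resulting overlap analysis is analogous to the short-cycle bookkeeping used for the configuration model (cf.~\cite{JLR00}, Chapter~9), but must be carefully adapted here to the proper-matching constraint intrinsic to the planted measure.
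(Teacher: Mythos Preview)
Your planted-colouring reformulation is correct and reaches the same conclusion as the paper, though by a somewhat cleaner route. The paper first proves the single-moment case (its Proposition~6.1) by directly counting triples (lift, rooted cycle, colouring): it fixes a non-backtracking closed walk $w$ in $G$ and a proper colouring $Q$ of the cycle, and then re-runs a version of the first-moment Laplace summation of Section~4.2 with the cycle edges and cycle colours already forced, eventually recovering exactly the factor $\lambda_\ell(1+\delta_\ell)$ via Friedman's walk count and the chromatic-polynomial count $(k-1)^\ell+(-1)^\ell(k-1)$. For the higher joint moments it sketches precisely what you do: the contribution from tuples of cycles that share a lift vertex carries an extra factor $n^{\nu-\mu}$ with $\nu<\mu$ and is negligible, while vertex-disjoint tuples factorize. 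Your approach replaces the re-summation step by the observation that, because every strongly equitable colouring admits the same number of proper lifts, the planted marginal on $L_n(G)$ is exactly the $Y$-tilted law; this reduces everything to two elementary facts (a forced proper edge in a uniform proper matching has probability $\sim k/(n(k-1))$, and a bounded number of colours in an equitable fibre are asymptotically i.i.d.\ uniform). The only point that deserves an extra sentence in your write-up is the case where the closed walk in $G$ traverses the same base edge $e$ more than once: then several forced edges lie in the same matching $M_e$, and you need to argue that their joint appearance probability is still $\sim(k/(n(k-1)))^m$ rather than merely citing the single-edge marginal.
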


We now prove Theorem~\ref{thm:upper_bound} using the small subgraph conditioning method of Robinson and Wormald. (See \cite{Wor99} for a full exposition of the method.) We must verify that
\[ \frac{\ex Y^2}{(\ex Y)^2} \sim \exp\left(\sum_{j \ge 1} \lambda_j\delta_j^2\right). \]

\begin{proof}
We begin with the right-hand side. We have
\begin{align*}
\sum_{j\ge1} \lambda_j\delta_j^2 &= \sum_{j\ge1} \lambda_j \cdot \frac{1}{(k-1)^{2j-2}}\\
	&= \frac{(k-1)^2}{2}\left( (|E|-|V|) \sum_{j \ge 1} \frac{1}{j(k-1)^{2j}} +  (|E|-|V|)  \sum_{j \ge 1} \frac{(-1)^j}{j(k-1)^{2j}} \quad + \right.\\
	& \qquad \qquad + \quad \left. \sum_{j \ge 1} \sum_{i=1}^{|V|} \frac{(\beta_i^+)^j+(\beta_i^-)^j}{j(k-1)^{2j}} \right)\\
	&= \frac{(k-1)^2}{2}\left( (|E|-|V|) \sum_{j \ge 1} \frac{1}{j(k-1)^{2j}} + (|E|-|V|)  \sum_{j \ge 1} \frac{(-1)^j}{j(k-1)^{2j}} \quad + \right.\\
	& \qquad \qquad + \quad \left. \sum_{i=1}^{|V|} \left(\sum_{j \ge 1} \frac{(\beta_i^+)^j}{j(k-1)^{2j}}+ \sum_{j \ge 1}\frac{(\beta_i^-)^j}{j(k-1)^{2j}} \right) \right)\\
	&= \frac{(k-1)^2}{2} \log\left( \left(\frac{(k-1)^2}{(k-1)^2-1}\right)^{|E|-|V|} \left(\frac{(k-1)^2}{(k-1)^2+1}\right)^{|E|-|V|} \quad \times \right.\\
	&\qquad \qquad \times \quad \left. \prod_{i=1}^{|V|} \left(\frac{(k-1)^2}{(k-1)^2-\beta_i^+} \right)\left(\frac{(k-1)^2}{(k-1)^2-\beta_i^-}\right)\right)\\
	&= \frac{(k-1)^2}{2} \log\left( \left(\frac{(k-1)^2}{\lambda'}\right)^{|E|-|V|} \left(\frac{(k-1)^2}{\lambda}\right)^{|E|-|V|} \quad \times \right.\\
	&\qquad \qquad \times \quad \left. \prod_{i=1}^{|V|} \left(\frac{(k-1)^4}{(k-1)^4-(\beta_i^++\beta_i^-)(k-1)^2+\beta_i^+\beta_i^-} \right) \right)\\
	&= \frac{(k-1)^2}{2} \log\left( \left(\frac{(k-1)^2}{\lambda'}\right)^{|E|-|V|} \left(\frac{(k-1)^2}{\lambda}\right)^{|E|-|V|} \quad \times \right.\\
	&\qquad \qquad \times \quad \left. \prod_{i=1}^{|V|} \left(\frac{(k-1)^4}{\lambda\lambda'+d-\alpha_i(k-1)^2} \right) \right)
\end{align*}
Exponentiating,
\begin{align*}
\exp\left(\sum\limits_{j \ge 1} \lambda_j\delta_j^2\right) &= \left( \frac{((k-1)^2)^{|E|-|V|+|E|-|V|+2|V|}}{(\lambda\lambda')^{|E|-|V|}\prod_{i=1}^{|V|} (\lambda\lambda'+d-\alpha_i(k-1)^2)}\right)^{\frac{1}{2}(k-1)^2}\\
	&= \left( \frac{((k-1)^2)^{2|E|}}{(\lambda\lambda')^{|E|-|V|}\prod_{i=1}^{|V|} (\lambda\lambda'+d-\alpha_i(k-1)^2)}\right)^{\frac{1}{2}(k-1)^2}
\end{align*}

Now we turn to the left-hand side of the proposition. Using Propositions~\ref{prop:EY} and \ref{prop:expectation_y_squared},
\[
\frac{\ex Y^2}{(\ex Y)^2} \sim \frac{C_2 (2\pi n)^{-(k-1)|V|} \left( k^{|V|} \left(\frac{k-1}{k}\right)^{|E|} \right)^{2n}}{\left(C_1 (2 \pi n)^{-(k-1)|V|/2} \left( k^{|V|} \left(\frac{k-1}{k}\right)^{|E|} \right)^n\right)^2} = \frac{C_2}{C_1^2}.
\]
and
\begin{align*}
\frac{C_2}{C_1^2} &= \frac{ k^{(k^2-k+1)|V|-\frac{1}{2}(k^2-1)|E|}(k-1)^{(2k^2-2k)|E|}}
{\lambda^{\frac{1}{2}(k-1)^2|E|}(k-2)^{\frac{1}{2}(k^2-1)|E|}h(d,k)^{\frac{(k-1)^2}{2}}\left(k^{k|V|/2}
\left(\frac{(k-1)^2}{k(k-2)}\right)^{(k-1)|E|/2}\right)^2}\\
	&= \frac{k^{(k-1)^2|V|-\frac{1}{2}(k-1)^2|E|}(k-1)^{2(k-1)^2|E|}}
{\lambda^{\frac{1}{2}(k-1)^2|E|}(k-2)^{\frac{1}{2}(k-1)^2|E|}h(d,k)^{\frac{(k-1)^2}{2}}}\\
	&=\frac{k^{(k-1)^2|V|}}{\left(\left(\frac{k^2}{\lambda\lambda'}\right)^{|V|}
\prod_{i=1}^{|V|}(\lambda\lambda'+d-\alpha_i(k-1)^2)\right)^{\frac{(k-1)^2}{2}}} \cdot \left( \frac{(k-1)^{4|E|}}{(\lambda\lambda')^{|E|}}\right)^{(k-1)^2/2}\\
	&=\left( \frac{((k-1)^2)^{2|E|}}{(\lambda\lambda')^{|E|-|V|}\prod_{i=1}^{|V|}(\lambda\lambda'+d-\alpha_i(k-1)^2)}\right)^{\frac{1}{2}(k-1)^2}
\end{align*}
from which we conclude
\[ \frac{\ex Y^2}{(\ex Y)^2} \sim \exp\left(\sum_{j \ge 1} \lambda_j\delta_j^2\right) \]
as required.
\end{proof}

%%%
% Useful tools
%%%

\section{Useful tools}\label{sec:tools}

In this section we prove two results which, in addition to being necessary for our arguments, may prove to be of independent interest.

First, in~\cite{AN05}, Achlioptas and Naor investigate the chromatic number of a random graph of average bounded degree, $G(n,d/n)$. In that article they prove a theorem regarding the optimization of square stochastic matrices which has
also been used in the context of uniform random regular graphs
(see~\cite{AM04,KPW10}). We extend their result to non-square stochastic matrices.

Then, in~\cite{GJR10}, Greenhill, Janson and Ruci\'nski write about the number of perfect matchings in random lifts of graphs. To facilitate their calculations, they prove a theorem for estimating a summation over multiple dimensions using Laplace's method.  We reformulate their result in terms of counting maximal forests in an auxiliary graph that encodes the summation constraints. This reformulation unveils the role of the eigenvalues of this auxiliary graph, in view of the well-known matrix-tree theorem, and simplifies the calculations.

\subsection{Optimization over stochastic matrices}

In this section we will introduce some inequalities regarding stochastic
matrices. A matrix $M$ is called \emph{row-stochastic} if all of its entries are
nonnegative and the sum of the entries in each row equals~$1$ (i.e.~each row of
$M$ defines a probability distribution). Similarly, $M$ is
\emph{column-stochastic} if $M^T$ is row-stochastic, and $M$ is
\emph{doubly-stochastic} if it is both row- and column-stochastic.
Stochastic matrices arise naturally when estimating the second moment of the
number of colourings of a random graph or in other related problems. For
instance, suppose that $C_1$ and $C_2$ are two equitable $k$-vertex-colourings of the
same graph (i.e.~colourings in which all the colour classes are of the same size).
Then we can describe how much $C_1$ and $C_2$ correlate in terms of a
doubly-stochastic matrix $M=(m_{i,j})_{i,j\in[k]}$, where each entry $m_{i,j}$
denotes the (appropriately rescaled) proportion of vertices that receive colour
$i$ in $C_1$ and colour $j$ in $C_2$.
In second moment calculations, one typically needs to determine which pairs of
colourings $C_1,C_2$ give the main contribution, which can be formulated as an
optimization problem over the set of doubly-stochastic $k\times k$ matrices.

The first inequality in this section, concerning square row-stochastic matrices,
was proved by Achlioptas and Naor~\cite{AN05}. They used this tool to obtain an
accurate second moment estimate of the number of colourings of an
Erd\H{o}s-R\'enyi random graph of constant average degree. Their inequality has
also been used in the context of uniform random regular graphs
(see~\cite{AM04,KPW10}).
Before stating the result, we need some notation.
Given a $q\times k$ matrix $M=(m_{i,j})_{i\in[q],j\in[k]}$, let
\[
\rho(M) = \sum_{i=1}^q \sum_{j=1}^k m_{i,j}^2
\qquad\text{and}\qquad
h(M) = - \sum_{i=1}^q \sum_{j=1}^k m_{i,j}\log m_{i,j}.
\]
If in addition $M$ is row-stochastic, then a standard convexity argument shows that
\begin{equation}\label{eq:rhobounds}
\frac{q}{k} \le \rho(M) \le q,
\end{equation}
where the minimum $\rho(M)=q/k$ is uniquely attained when all $m_{i,j}=1/k$, and
the maximum $\rho(M)=q$ is achieved precisely at those row-stochastic matrices
with all entries in $\{0,1\}$.
For each integer $q\ge3$, define
\begin{equation}\label{eq:cq}
c_q = \frac{(q-1)^3}{q(q-2)} \log(q-1).
\end{equation}
\begin{theorem}[Achlioptas and Naor~\cite{AN05}]\label{thm:achlioptas-naor}
Let $M=(m_{i,j})$ be a $q \times q$ row-stochastic matrix, where $q\ge3$.
Then, for any $c < c_q$,
\[
\frac{1}{q}h(M) + c\log\left( q^2 - 2q + \rho(M) \right) \le \log q + c\log\left((q-1)^2\right),
\]
Furthermore, we have equality if and only if $m_{i,j} = \frac{1}{q}$ for all $i,j\in[q]$.
\end{theorem}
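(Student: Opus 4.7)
The plan is to show that $F(M) := \tfrac{1}{q}h(M) + c\log(q^2-2q+\rho(M))$ is maximized over the compact polytope $\mathcal{P}_q$ of $q\times q$ row-stochastic matrices only at the uniform matrix $M_0 := (1/q)_{i,j}$ whenever $c<c_q$. Direct substitution with $h(M_0)=q\log q$ and $\rho(M_0)=1$ gives $F(M_0)=\log q + c\log(q-1)^2$, matching the right-hand side of the claim, so the task reduces to proving strict inequality at every $M\neq M_0$.

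By compactness a maximizer $M^\star\in\mathcal{P}_q$ exists. For every entry $m^\star_{i,j}>0$, the KKT stationarity condition attached to the row-sum constraint $\sum_j m_{i,j}=1$ (with Lagrange multiplier $\lambda_i$) rearranges to
\[
\log m^\star_{i,j} - \alpha\, m^\star_{i,j} = \mu_i,
\]
where $\alpha := \tfrac{2qc}{q^2-2q+\rho(M^\star)}$ is independent of $(i,j)$ and $\mu_i$ depends only on the row. Since $x\mapsto\log x - \alpha x$ is strictly concave on $(0,\infty)$, each row of $M^\star$ has positive entries lying in at most two distinct values $a_i>b_i\ge 0$. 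Moreover, the common value of $\alpha$ couples the rows: the two positive levels in any pair of rows must lie in the same two-element level set of $x\mapsto\log x-\alpha x$.

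Next, using the column-permutation invariance of $F$ together with a pairwise swap-and-average argument across rows (which preserves $\rho$ but weakly increases the entropic term whenever two rows are not aligned), I would reduce to the case in which every row of $M^\star$ is a column-permutation of one common pattern $(\underbrace{a,\ldots,a}_r,\underbrace{b,\ldots,b}_{q-r})$ with $ra+(q-r)b=1$, for some $r\in\{1,\ldots,q-1\}$. Substituting this shape yields the one-parameter family
\[
F_r(a) = -\bigl[\, r a\log a + (q-r) b\log b\,\bigr] + c\log\bigl(q^2-2q+q(ra^2+(q-r)b^2)\bigr),
\]
whose derivative $F_r'(a) = r\bigl[\log(b/a) + \tfrac{2qc(a-b)}{q^2-2q+q(ra^2+(q-r)b^2)}\bigr]$ vanishes at $a=1/q$. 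A direct calculus analysis then shows that, for $c<c_q$ and each admissible $r$, one has $F_r(a)<F_r(1/q)$ for every $a\neq 1/q$ in the admissible interval; the precise constant $c_q=\tfrac{(q-1)^3}{q(q-2)}\log(q-1)$ is determined by the most restrictive case, corresponding to a non-trivial interior critical point of $F_1$.

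The main obstacle will be the row-alignment reduction: ruling out maximizers whose rows have genuinely different two-value shapes $(a_i,b_i,r_i)$. The shared value of $\alpha$ already severely restricts which pairs $(a_i,b_i)$ can coexist, and combining this with pairwise entry-swapping should force all rows into a common pattern up to column permutation. Boundary faces of $\mathcal{P}_q$ on which some $m^\star_{i,j}=0$ can be handled by restriction to the corresponding lower-dimensional face and re-running the KKT analysis, or by a direct comparison with $F(M_0)$ that exploits the loss of entropy on those faces.
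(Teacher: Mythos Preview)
The paper does not prove this theorem: it is quoted as a result of Achlioptas and Naor~\cite{AN05}, and the only additional content is the Remark observing that the equality case follows from their proof via the characterization $\rho(M)=1$. So there is no in-paper argument for you to match.

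That said, your sketch has a genuine gap at precisely the step you flag as ``the main obstacle''. From the KKT conditions you correctly derive that every positive entry of row $i$ satisfies $\log m^\star_{i,j}-\alpha\,m^\star_{i,j}=\mu_i$ with $\alpha$ global but $\mu_i$ row-dependent. This forces each row to use at most two positive values, but it does \emph{not} force different rows to share the same pair $\{a,b\}$: rows with different $\mu_i$ sit on different level sets of the same function, and nothing in the first-order conditions rules that out. Your sentence ``the common value of $\alpha$ couples the rows: the two positive levels in any pair of rows must lie in the same two-element level set'' is therefore unjustified.

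The proposed remedy, a ``pairwise swap-and-average argument across rows which preserves $\rho$ but weakly increases the entropic term'', is also not sound as stated. Swapping two entries in the same column but different rows preserves $\rho$ but destroys the row-stochastic constraint; averaging entries within a row preserves the constraint but strictly decreases $\rho$ (and hence changes the second term of $F$, not just the first). Any symmetrization that simultaneously respects the row constraints, controls $\rho$, and pushes the entropy in the right direction has to be specified much more carefully; this is exactly where the real work in~\cite{AN05} lies, and it does not reduce to a soft swap/average manoeuvre. Without that reduction, the one-parameter analysis of $F_r(a)$ (which is fine on its own) does not yet bound the maximum over all of $\mathcal P_q$.
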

\begin{remark}
The second claim in the proposition is not explicitly stated in~\cite{AN05}.
However, it follows from their proof that equality holds if and only if
$\rho(M)=1$, which in view of~\eqref{eq:rhobounds} corresponds to the case when
all $m_{i,j} = \frac{1}{q}$.
\end{remark}
We will use Theorem~\ref{thm:achlioptas-naor} in our second moment
calculations in Section~\ref{sec:second} for the number $Y$ of strongly
equitable $k$-colourings of a random lift of an arbitrary $d$-regular graph. Our
estimate of $\ex Y^2$  will involve solving an optimization problem
which we will manage to reduce to the inequality in
Theorem~\ref{thm:achlioptas-naor}.
Furthermore, and somewhat surprisingly, while Achlioptas and Naor's inequality
typically arises in second moment arguments, we will require a more general
version of the inequality to handle a first moment calculation. Because
we must concern ourselves with both a random lift and a random coloring of that
lift, this first moment calculation takes on more of a second moment flavor.
When we bound the expected number $X$ of (not necessarily equitable)
$k$-colorings of a random lift of $K_{d+1}$ in Section~\ref{sec:first}, we will
use a generalization of Theorem~\ref{thm:achlioptas-naor} that applies to
rectangular $q\times k$ row-stochastic matrices.

\begin{proposition}\label{prop:achlioptas-naor_corollary}
Let $M = (m_{i,j})$ be a $q \times k$ row-stochastic matrix, where $q \ge 3$ and $k \le q$. Then, for any $c < \frac{k-1}{q-1} c_q$,
\[
\frac{1}{q} h(M) + c\log\left(kq - k - q + \frac{k}{q}\, \rho(M)  \right) \leq \log k + c\log\left((q-1)(k-1)\right)
\]
or
\[
c\log\left(1+\frac{\frac{k}{q}\rho(M)-1}{(q-1)(k-1)}\right) \leq \log k - \frac{1}{q} h(M)
\]
with equality if and only if $m_{i,j} = \frac{1}{k}$.
\end{proposition}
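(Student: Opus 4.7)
The plan is to reduce Proposition~\ref{prop:achlioptas-naor_corollary} to Theorem~\ref{thm:achlioptas-naor} by associating to the given $q\times k$ row-stochastic matrix $M$ an auxiliary $q\times q$ row-stochastic matrix $M^*$ obtained by a \emph{uniform padding} construction: set $M^*_{i,j} := (k/q)\,m_{i,j}$ for $1\le j\le k$ and $M^*_{i,j} := 1/q$ for $k<j\le q$, so that each row of $M^*$ still sums to $1$. A direct calculation gives
\[
h(M^*) = q\log q - k\log k + \frac{k}{q}\,h(M), \qquad \rho(M^*) = \frac{k^2}{q^2}\,\rho(M) + \frac{q-k}{q},
\]
and, writing $u := \frac{k}{q}\rho(M) - 1 \ge 0$, one checks that $q^2 - 2q + \rho(M^*) = (q-1)^2 + (k/q)\,u$.

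Applying Theorem~\ref{thm:achlioptas-naor} to $M^*$ with any $c' < c_q$, substituting the expressions above, cancelling $\log q$ on both sides, and multiplying through by $q/k$ yields
\[
\frac{1}{q}\,h(M) \le \log k - \frac{qc'}{k}\,\log\left(1 + \frac{ku}{q(q-1)^2}\right).
\]
To finish, I set $c = \frac{k-1}{q-1}\,c'$ and verify the one-variable inequality
\[
c\,\log\left(1 + \frac{u}{(q-1)(k-1)}\right) \le \frac{qc'}{k}\,\log\left(1 + \frac{ku}{q(q-1)^2}\right) \qquad (u\ge 0).
\]
This holds with equality at $u=0$, and differentiating shows that the derivative of the difference of its two sides equals
\[
\frac{1}{(q-1)^2}\left[\frac{1}{1 + ku/(q(q-1)^2)} - \frac{1}{1 + u/((q-1)(k-1))}\right],
\]
which is nonnegative precisely because $k(k-1)\le q(q-1)$ for $k\le q$. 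Letting $c'\uparrow c_q$ then covers the entire claimed range $c < \frac{k-1}{q-1}\,c_q$.

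For the equality case, when $k<q$ the derivative above is strictly positive for $u>0$, so equality in the conclusion of the proposition forces $u=0$; strict convexity of $\rho$ (via Cauchy--Schwarz in each row) then forces $m_{i,j} = 1/k$ everywhere. The case $k=q$ degenerates to $M^*=M$, and equality follows directly from Theorem~\ref{thm:achlioptas-naor} together with the remark thereafter. The key conceptual step is choosing the uniform-padding construction of $M^*$, which makes the AN log-argument $q^2 - 2q + \rho(M^*)$ align with the target log-argument $kq - k - q + (k/q)\rho(M)$ up to a one-variable comparison; once that alignment is in place, the main technical obstacle is just the short monotonicity verification of the auxiliary inequality above, which reveals precisely where the factor $\frac{k-1}{q-1}$ on $c_q$ comes from.
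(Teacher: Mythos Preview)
Your proof is correct and follows essentially the same route as the paper: both construct the same uniformly-padded $q\times q$ matrix, derive the identical relations for $h$ and $\rho$, and apply Theorem~\ref{thm:achlioptas-naor} with $c' = \frac{q-1}{k-1}c$. The only difference is in the final one-variable comparison, where the paper splits it into a Jensen step (concavity of $x\mapsto\log(1+x/(q-1)^2)$) followed by monotonicity of $y\mapsto y\log(1+a/y)$, while you verify the combined inequality directly by a single derivative computation; your version is slightly more economical but conceptually equivalent.
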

\begin{proof}
Our first step is to extend $M$ to a $q \times q$ row-stochastic matrix $\hat M$ by adding $q-k$ extra columns and scaling each column appropriately. More precisely, let $\hat{M} = (\hat m_{i,j})_{i,j\in[q]}$ with
\[
\hat{m}_{i,j} = \begin{cases} m_{i,j} \cdot \frac{k}{q} & 1 \leq j \leq k \\
\frac{1}{q} & k+1 \leq j \leq q. \end{cases}
\]
Clearly, $\hat{M}$ is a $q \times q$ row-stochastic matrix. Hence, we can apply Theorem~\ref{thm:achlioptas-naor} to $\hat M$ with $\hat c := \frac{q-1}{k-1} c < c_q$, and obtain
\begin{equation}\label{eq:AN}
\hat c\log\left(1+\frac{\rho(\hat M)-1}{(q-1)^2}\right) \le \log q - \frac{1}{q}h(\hat M),
\end{equation}
which holds with equality if and only if $\hat m_{i,j} = \frac{1}{q}$ (which is in turn equivalent to $m_{i,j} = \frac{1}{k}$).
Our next task is to derive relations between $\rho(\hat M)$ and $\rho(M)$ and also between $h(\hat M)$ and $h(M)$, which will allow us to express the inequality in~\eqref{eq:AN} in terms of $\rho(M)$ and $h(M)$.
We have
\begin{align}
\rho(\hat M) &= \sum_{i,j=1}^q \hat m_{i,j}^2 \notag\\
	&= \frac{k^2}{q^2} \sum_{i,j=1}^k m_{i,j}^2 + q\, (q-k)\, \frac{1}{q^2} \notag\\
	&= \frac{k}{q}\left(\frac{\rho(M) k}{q}\right) + \frac{q-k}{q} \notag\\
	&= \frac{k}{q}\left(\frac{\rho(M) k}{q} - 1 \right) + 1 \label{eq:rho}
\end{align}
Given a probability distribution $x=(x_1,\ldots,x_\ell)$, let $H(x) = -\sum_{j=1}^k x_j \log x_j$ denote the entropy of $x$.
For each $i\in[q]$, let $m_i = (m_{i,1}, \ldots, m_{i,k})$ and $\hat m_i = (\hat m_{i,1}, \ldots, \hat m_{i,q})$. These are the $i$-th rows of matrices $M$ and $\hat M$, which we regard as probability distributions. Also, let $u=(\frac{1}{q-k},\ldots,\frac{1}{q-k})$ be the uniform distribution on a set of $q-k$ elements.

Note that $\hat m_i$ is a mixture of $m_i = (m_{i,1}, \ldots, m_{i,k})$ and $u=(\frac{1}{q-k},\ldots,\frac{1}{q-k})$, in the sense that we can sample from $\hat m_i$ by sampling from $m_i$ with probability $\frac{k}{q}$ and from $u$ with probability $\frac{q-k}{q}$.
Hence, by the chain rule,
\begin{align*}
H(\hat m_i) &= \frac{k}{q} H(m_i) + \frac{q-k}{q} H(u) + H(\tfrac{k}{q},\tfrac{q-k}{q})
\\
	&= \frac{k}{q} H(m_i) + \frac{q-k}{q} \log(q-k) + \frac{k}{q} (\log q - \log k) + \frac{q-k}{q}(\log q - \log (q-k))
	\\
	&= \log q - \frac{k}{q} \log k + \frac{k}{q} H(m_i).
\end{align*}
Noting that $h(M) = \sum_{i=1}^q H(m_i)$ and $h(\hat M) = \sum_{i=1}^q H(\hat m_i)$, we conclude
\begin{align}
 \log k - \frac{1}{q} h(M)
&= \frac{1}{k} \sum_{i=1}^q \left( \frac{k}{q} \log k - \frac{k}{q} H(m_i)\right)
\notag\\
&= \frac{1}{k} \sum_{i=1}^q \left(\log q - H(\hat m_i)\right)
= \frac{q}{k} \left( \log q - \frac{1}{q} h(\hat M) \right).
\label{eq:hM}
\end{align}
In view of~\eqref{eq:rho} and~\eqref{eq:hM}, we can use the inequality
in~\eqref{eq:AN} to write
\begin{align*}
\log k - \frac{1}{q} h(M) &= \frac{q}{k} \left( \log q - \frac{1}{q} h(\hat M) \right)\\
	&\ge \frac{q}{k} \hat c\log\left(1+\frac{\rho(\hat{M})-1}{(q-1)^2}\right)\\
	&= \frac{q}{k} \hat c\log\left(1+\frac{\frac{k}{q}\left(\frac{\rho(M) k}{q} - 1 \right)}{(q-1)^2}\right)
\end{align*}
Note that $\frac{k}{q}\left(\frac{\rho(M) k}{q} - 1 \right) > - \frac{k}{q} > -(q-1)^2$.
Since $f(x)=\log\left(1+\frac{x}{(q-1)^2}\right)$ is a concave function for $x>-(q-1)^2$ and $f(0)=0$, Jensen's inequality gives
\[
\log\left(1+\frac{\frac{k}{q}\left(\frac{\rho(M) k}{q} - 1 \right)}{(q-1)^2}\right)
\ge \frac{k}{q} \log\left(1+\frac{\left(\frac{\rho(M) k}{q} - 1 \right) }{(q-1)^2}\right),
\]
which combined with the inequality above yields

\begin{equation}\label{eq:almostthere}
\log k - \frac{1}{q} h(M) \ge \hat c \log\left(1+\frac{\left(\frac{\rho(M) k}{q} - 1 \right) }{(q-1)^2}\right).
\end{equation}
In order to further bound the right-hand side from below, we introduce a new function
\[
g(y) = y\log\left(1+\frac{a}{y}\right),
\qquad\text{where}\quad
a = \frac{1}{q-1}\left(\frac{\rho(M) k}{q}-1\right).
\]
From~\eqref{eq:rhobounds}, we know $0 \le a \le 1$. We claim that $g(y)$ is nondecreasing for all $y>0$.
Indeed, its derivative satisfies
\[
g'(y) = -\log\left(1 - \frac{a}{y+a}\right) - \frac{a}{y+a} \ge 0,
\]
where we've used $\log(1+x)\le x$ for $x>-1$. Hence, from~\eqref{eq:almostthere},
\[
\log k - \frac{1}{q} h(M)
\ge \frac{\hat c}{q-1} \, g(q-1) \ge \frac{\hat c}{q-1} \, g(k-1)
= c \log\left(1+\frac{\frac{\rho(M) k}{q}-1}{(q-1)(k-1)}\right),
\]
which is equivalent to
\[
\frac{1}{q} h(M) + c\log\left(kq - k - q + \frac{k}{q}\, \rho(M)  \right) \leq \log k + c\log\left((q-1)(k-1)\right),
\]
as desired. Finally, in view of our remark below~\eqref{eq:AN}, equality holds
if and only if all $m_{i,j}=1/k$. This completes the proof of the proposition.
\end{proof}

\subsection{Laplace summation over lattices}
\label{ssec:laplace_summation}

Here we will prove Proposition~\ref{prop:laplacian_summation_gamma}, which we will use in Sections \ref{ssec:equi_expectation} and \ref{ssec:outer} to precisely estimate expectations expressed as sums, as a consequence of a similar result of Greenhill, Janson and Ruci\'nski~\cite{GJR10}.

We include here the definitions required to state their theorem, but refer the reader to~\cite{GJR10} for additional background.
A {\em lattice} $\Lambda$ is an additive subgroup of $\real^N$ such that every
bounded region in $\real^N$ contains a finite number of elements of $\Lambda$.
It is well-known that every lattice $\Lambda$ is isomorphic to $\ent^r$, for
some integer $r$ ($0\le r\le N$) which we call the {\em rank} of $\Lambda$.
Moreover, every lattice $\Lambda$ of rank $r\ge 1$ admits a {\em basis}
$u_1,\ldots,u_r \in \Lambda$ such that every point in $\Lambda$ can be uniquely
represented as a linear combination $m_1u_1+\cdots+m_ru_r$ with integer
coefficients $m_i$.

Given a lattice $\Lambda$ with a basis $u_1,\ldots,u_r$, we
define the {\em determinant} of the lattice as
\[
\det(\Lambda) = \sqrt{\det U^TU},
\]
where $U$ is an $N\times r$ matrix with columns $u_1,\ldots,u_r$. This quantity does not depend on our choice of the basis. If $\mathbb V \subseteq \mathbb{R}^N$ is an $r$-dimensional vector space spanning $\Lambda$ (and thus with basis $u_1,\ldots,u_r$), for any symmetric $N\times N$ matrix $H$ we define
\begin{equation}
\det(H|_{\mathbb V}) = \frac{ \det U^T H U}{\det U^TU}.
\label{eq:detHV}
\end{equation}
This value also does not depend on our choice of basis.

\newpage

\begin{theorem}[Greenhill, Janson and Ruci\'nski~\cite{GJR10}]\label{laplacian_summation}
Suppose the following:
\begin{enumerate}[(i)]
\item $\Lambda \subset \real^N$ is a lattice with rank $1\le r \le N$.
\item $\mathbb V \subseteq \real^N$ is the $r$-dimensional subspace spanned by $\Lambda$.
\item $\mathbb W = \mathbb V + \mbf w$ is an affine subspace parallel to $\mathbb V$, for some $\mbf w \in \real^N$.
\item $K \subset \real^N$ is a compact convex set with non-empty interior $K^\circ$.
\item $\phi : K \to \real$ is a continuous function and the restriction of $\phi$ to $K \cap \mathbb W$ has a unique maximum at some point $\mbf z_0 \in K^\circ \cap \mathbb W$.
\item $\phi$ is twice continuously differentiable in a neighbourhood of $\mbf z_0$ and $H := D^2\phi(\mbf z_0)$ is its Hessian at $\mbf z_0$.
\item $\psi : K_1 \to \real$ is a continuous function on some neighbourhood $K_1 \subseteq K$ of $\mbf z_0$ with $\psi(\mbf z_0) > 0$.
\item For each positive integer $n$ there is a vector $\ell_n \in \real^N$ with $\ell_n/n \in \mathbb W$,
\item For each positive integer $n$ there is a positive real number $b_n$ and a function \linebreak $a_n : (\Lambda + \ell_n) \cap nK \to \real$ such that, as $n \to \infty$,
\[ a_n(\ell) = O(b_n e^{n\phi(\ell/n)+o(n)}),\quad \ell \in (\Lambda +\ell_n) \cap nK \]
and
\[ a_n(\ell) = b_n(\psi(\ell/n) + o(1)) e^{n\phi(\ell/n)}, \quad \ell \in (\Lambda +\ell_n) \cap nK_1,\]
uniformly for $\ell$ in the indicated sets.
\end{enumerate}
Then provided $\det(-H|_{\mathbb V} )\neq 0$, as $n \to \infty$,
\[ \sum_{\ell\in(\Lambda+\ell_n)\cap nK}a_n(\ell) \sim \frac{(2\pi)^{r/2}\psi(\mbf z_0)}{\det(\Lambda)\det(-H|_{\mathbb V})^{1/2}}b_nn^{r/2}e^{n\phi(\mbf z_0)}. \]
\end{theorem}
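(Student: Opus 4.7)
The plan is to adapt the classical Laplace method for integrals to this lattice setting. First, I would split the sum into two pieces: a shrinking neighborhood $U$ of $\mathbf{z}_0$ (chosen inside $K_1$ of diameter $o(1)$ but large enough that the lattice populates it densely) and its complement in $(\Lambda+\ell_n)\cap nK$. For the complement, the hypothesis that $\phi$ attains its unique maximum at $\mathbf{z}_0$ on $K\cap\mathbb{W}$, combined with continuity and compactness, gives a uniform gap: there exists $\epsilon>0$ such that $\phi(\mathbf{z})\le \phi(\mathbf{z}_0)-\epsilon$ whenever $\mathbf{z}\in (K\cap\mathbb{W})\setminus U$. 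Together with the crude bound $a_n(\ell)=O(b_n e^{n\phi(\ell/n)+o(n)})$ and $|(\Lambda+\ell_n)\cap nK|=O(n^N)$, this makes the contribution from outside $U$ negligible compared with the putative main term $b_n n^{r/2} e^{n\phi(\mathbf{z}_0)}$.

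Next, inside $U$ I would Taylor-expand $\phi$ to second order about $\mathbf{z}_0$: for $\mathbf{v}\in\mathbb{V}$,
\begin{equation*}
\phi(\mathbf{z}_0+\mathbf{v})=\phi(\mathbf{z}_0)+\tfrac{1}{2}\mathbf{v}^T H\mathbf{v}+o(\|\mathbf{v}\|^2),
\end{equation*}
the linear term vanishing because $\mathbf{z}_0$ is an interior maximum of $\phi|_{\mathbb{W}}$. Using the refined estimate $a_n(\ell)=b_n(\psi(\ell/n)+o(1))e^{n\phi(\ell/n)}$, pulling out the constant factor $b_n\psi(\mathbf{z}_0)e^{n\phi(\mathbf{z}_0)}$, and writing $\ell=\ell_n+\lambda$ for $\lambda\in\Lambda$, the sum over $U$ reduces up to a $1+o(1)$ factor to
\begin{equation*}
\sum_{\lambda\in\Lambda,\ \|\lambda-\lambda_0\|\le n\,\mathrm{diam}(U)}\exp\!\left(\tfrac{1}{2n}(\lambda-\lambda_0)^T H (\lambda-\lambda_0)\right),
\end{equation*}
where $\lambda_0:=n\mathbf{z}_0-\ell_n\in\mathbb{V}$ is the (generally non-lattice) offset. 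Fixing a basis $u_1,\ldots,u_r$ of $\Lambda$ with associated $N\times r$ matrix $U$ and writing $\lambda=U\mathbf{k}$, $\lambda_0=U\mathbf{k}_0$, this becomes a discrete Gaussian over $\mathbb{Z}^r+\mathbf{k}_0$ with covariance matrix $n(U^T(-H)U)^{-1}$; note that $-H|_{\mathbb{V}}$ is positive definite because of the maximality of $\mathbf{z}_0$ combined with the non-degeneracy assumption $\det(-H|_{\mathbb{V}})\ne 0$.

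A Riemann-sum or Poisson-summation argument then gives
\begin{equation*}
\sum_{\mathbf{k}\in\mathbb{Z}^r+\mathbf{k}_0}\exp\!\left(-\tfrac{1}{2n}\mathbf{k}^T(U^T(-H)U)\mathbf{k}\right)\sim \frac{(2\pi n)^{r/2}}{\det(U^T(-H)U)^{1/2}},
\end{equation*}
where the offset $\mathbf{k}_0$ is absorbed into the $1+o(1)$ error at scale $\sqrt{n}$. Combining with the identity $\det(U^T(-H) U)=\det(-H|_{\mathbb{V}})\det(U^T U)$ from~\eqref{eq:detHV} and $\det(\Lambda)=\sqrt{\det(U^T U)}$, the right-hand side becomes $(2\pi n)^{r/2}/[\det(\Lambda)\det(-H|_{\mathbb{V}})^{1/2}]$, and multiplying by the prefactor $b_n\psi(\mathbf{z}_0)e^{n\phi(\mathbf{z}_0)}$ recovers the stated asymptotic.

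The main obstacle will be making the Gaussian approximation uniform over the effective Gaussian window of radius $\sim\sqrt{n\log n}$. The qualitative $o(\|\mathbf{v}\|^2)$ remainder is insufficient that far out, so one splits $U$ into a Gaussian core of radius $n^{-1/2+\delta}$ (where the Taylor expansion controls the exponent to within a $1+o(1)$ factor) and an outer annular shell where one instead uses a quadratic upper bound $\phi(\mathbf{z})\le\phi(\mathbf{z}_0)-c\|\mathbf{z}-\mathbf{z}_0\|^2$ (from strict concavity on $\mathbb{V}$ and continuity) to show the tail contribution decays faster than any polynomial in $n$. This two-scale analysis, together with verifying that the affine shift from $\Lambda$ to $\Lambda+\ell_n$ and the non-integer offset $\mathbf{k}_0$ do not alter the leading asymptotic, is the technical heart of the argument.
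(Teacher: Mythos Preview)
The paper does not prove this theorem at all: it is simply quoted from Greenhill, Janson and Ruci\'nski~\cite{GJR10} as a black box, and the paper then derives Proposition~\ref{prop:laplacian_summation_gamma} from it by a change of variables and an application of the matrix-tree theorem. So there is no proof in the paper to compare your proposal against.

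That said, your sketch is a reasonable outline of the standard Laplace-on-a-lattice argument and is essentially the strategy used in the original source. The decomposition into a core of radius $n^{-1/2+\delta}$, an annular shell controlled by a quadratic upper bound, and a far region handled by compactness and the uniform gap is correct in spirit, and your identification of $\det(U^T(-H)U)=\det(-H|_{\mathbb V})\det(U^TU)$ as the key algebraic step is exactly right. One point to be careful about: your claim that the offset $\mathbf{k}_0$ is ``absorbed into the $1+o(1)$ error'' is true but deserves justification, since $\mathbf{k}_0$ can drift with $n$ (it depends on $\ell_n$), and you need uniformity in the offset for the Riemann-sum or Poisson-summation step. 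If you want to see the details carried out, consult~\cite{GJR10} directly.
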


In order to state our proposition, we need some additional definitions. Let
$\Gamma=(V_\Gamma, E_\Gamma)$ be a non-empty multigraph (possibly with multiple
edges, but no loops). Fix an arbitrary orientation of the edges in $E_\Gamma$.
The {\em signed incidence matrix} of $\Gamma$  (with respect to that
orientation) is a $|V_\Gamma|\times |E_\Gamma|$ matrix $\tilde D=(\tilde
D_{v,e})_{v\in V_\Gamma, e\in E_\Gamma}$, where $\tilde D_{v,e}=1$ if $v$ is the
tail of $e$, $\tilde D_{v,e}=-1$ if $v$ is the head of $e$ and $\tilde
D_{v,e}=0$ otherwise. Similarly, the {\em unsigned incidence matrix} of $\Gamma$
is a matrix $D=(D_{v,e})_{v\in V_\Gamma, e\in E_\Gamma}$, where $D_{v,e}=|\tilde
D_{v,e}|$, and does not depend on the orientation of the edges. Finally, recall
the definition of $\det(H|_{\mathbb V})$ from~\eqref{eq:detHV}.

\begin{proposition}\label{prop:laplacian_summation_gamma}
Suppose the following:
\begin{enumerate}[(i)]
\item $\Gamma=(V_\Gamma, E_\Gamma)$ is a non-empty bipartite multigraph with at least one cycle.
\item $D$ is the unsigned incidence matrix of $\Gamma$ .
\item $\tau(\Gamma)$ is the number of maximal forests in $\Gamma$.
\item $\mathbb V = \Ker(D) \subseteq \mathbb{R}^{|E_\Gamma|}$ is a vector space of dimension $r$.
\item $\mbf y\in\mathbb{R}^{|V_\Gamma|}$ such that
\begin{equation}
D\mbf x = \mbf y
\label{eq:Dxy}
\end{equation}
is a consistent linear system.
\item $K \subset \mathbb{R}^{|E_\Gamma|}$ is a compact convex set with non-empty interior $K^\circ$.
\item $\phi : K \to \mathbb{R}$ is a continuous function and the maximum of $\phi$ in $K$ subject to~\eqref{eq:Dxy} is attained at a unique maximizer $\mbf{\hat x} \in K^\circ$.
\item $\phi$ is twice continuously differentiable in a neighbourhood of $\mbf{\hat x}$ and $H$ is its Hessian matrix at $\mbf{\hat x}$.
\item $\psi : K_1 \to \mathbb{R}$ is a continuous function on some neighbourhood $K_1 \subseteq K$ of $\mbf{\hat x}$ with $\psi(\mbf{\hat x}) > 0$.
\item For each positive integer $n$,
\[
\mathbb X_n = \left\{ \mbf x\in K \cap \frac{1}{n} \ent^{|E_\Gamma|} : D\mbf x = \mbf y \right\}
\]
is non-empty, and there is a positive real number $c_n$ and a function  $T_n : \mathbb X_n \to \mathbb{R}$ such that, as $n \to \infty$,
\[ T_n(\mbf x) = O( c_n e^{n\phi(\mbf x)+o(n)}),\quad \mbf x \in \mathbb X_n \]
and
\[ T_n(\mbf x) = c_n (\psi(\mbf x) + o(1)) e^{n\phi(\mbf x)}, \quad \mbf x \in \mathbb X_n \cap K_1,\]
uniformly for $\mbf x$ in the indicated sets.
\end{enumerate}
Then provided $\det(-H|_{\mathbb V} )\neq 0$, as $n \to \infty$,
\[
\sum_{\mbf x \in \mathbb X_n} T_n(\mbf x) \sim \frac{\psi(\mbf{\hat x})}{\tau(\Gamma)^{1/2} \det(-H|_{\mathbb V})^{1/2}} (2\pi n)^{r/2} c_n e^{n\phi(\mbf{\hat x})}.
\]
Furthermore, in the case that $\Gamma$ is not bipartite, the proposition remains valid if we replace $D$ by the signed incidence matrix $\tilde D$ of $\Gamma$ (with respect to a fixed orientation of $E_\Gamma$).
\end{proposition}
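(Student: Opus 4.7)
The plan is to derive Proposition~\ref{prop:laplacian_summation_gamma} directly from Theorem~\ref{laplacian_summation} by a canonical choice of lattice, and then to evaluate that lattice's determinant via the matrix-tree theorem. I will run the argument with the signed incidence matrix $\tilde D$, which covers the non-bipartite version of the proposition stated at the end. The bipartite case with $D$ reduces to this one: there is a diagonal $\pm 1$ matrix $S$ (obtained by negating the rows of $D$ indexed by one side of the bipartition) with $\tilde D = SD$, so $\Ker(D) = \Ker(\tilde D)$ and $D\mbf x = \mbf y$ has the same solutions as $\tilde D\mbf x = S\mbf y$.

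For the reduction, take $N = |E_\Gamma|$ and $\mathbb V$ as in hypothesis~(iv), of dimension $r\ge 1$ (using that $\Gamma$ has at least one cycle), and set $\Lambda = \mathbb V \cap \ent^{|E_\Gamma|}$, the cycle lattice of $\Gamma$; this is a genuine lattice of rank $r$ spanning $\mathbb V$ since $\tilde D$ has integer entries. Fix any $\mbf x_0 \in \mathbb X_n$ (non-empty by hypothesis~(x)) and let $\ell_n = n\mbf x_0$ and $\mbf w = \mbf x_0$, so that $\mathbb W = \mathbb V + \mbf w$ coincides with the affine solution space of~\eqref{eq:Dxy} and $\ell_n/n \in \mathbb W$. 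The scaling $\mbf x \mapsto n\mbf x$ is a bijection between $\mathbb X_n$ and $(\Lambda + \ell_n) \cap nK$, because for any $\mbf x \in \mathbb X_n$ the vector $n\mbf x - \ell_n = n(\mbf x - \mbf x_0)$ lies in $\ent^{|E_\Gamma|} \cap \Ker(\tilde D) = \Lambda$. Setting $a_n(\ell) = T_n(\ell/n)$, $b_n = c_n$, $\mbf z_0 = \mbf{\hat x}$, and using the same $\phi$, $\psi$, $K$, $K_1$, all hypotheses of Theorem~\ref{laplacian_summation} are satisfied, and the theorem yields
\[
\sum_{\mbf x \in \mathbb X_n} T_n(\mbf x) \sim \frac{\psi(\mbf{\hat x})}{\det(\Lambda)\, \det(-H|_{\mathbb V})^{1/2}} \, (2\pi n)^{r/2}\, c_n\, e^{n\phi(\mbf{\hat x})}.
\]

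The heart of the argument, and the step I expect to require the most care, is then the identity $\det(\Lambda) = \sqrt{\tau(\Gamma)}$. Let $c$ be the number of connected components of $\Gamma$ and pick any $|E_\Gamma| \times r$ integer matrix $U$ whose columns form a $\ent$-basis of $\Lambda$, so that $\det(\Lambda)^2 = \det(U^T U)$. The plan is to show $\det(U^T U) = \tau(\Gamma)$ via two applications of the Cauchy--Binet formula. Delete one row of $\tilde D$ per component to produce a full-rank matrix $\tilde D'$; the matrix-tree theorem gives $\det(\tilde D'(\tilde D')^T) = \tau(\Gamma)$, and Cauchy--Binet expresses this as a sum over $(|V_\Gamma|-c)$-edge subsets, with each maximal forest contributing $1$ and all other subsets contributing $0$. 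Symmetrically, Cauchy--Binet applied to $\det(U^T U)$ gives a sum over $r$-edge subsets $S$ of $(\det U|_S)^2$; one shows $\det U|_S \neq 0$ precisely when $\bar S$ is a maximal forest of $\Gamma$ (using that any cycle contained in $\bar S$ would yield a nonzero vector in $\Lambda$ vanishing on $S$, and conversely), in which case comparison with the fundamental cycle basis of $\Lambda$ associated to the forest $\bar S$ produces a change-of-basis matrix in $GL_r(\ent)$, forcing $(\det U|_S)^2 = 1$. Both Cauchy--Binet sums therefore reduce to the count of maximal forests, yielding $\det(\Lambda) = \sqrt{\tau(\Gamma)}$, and substituting this into the displayed asymptotic gives the claimed formula.
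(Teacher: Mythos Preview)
Your proposal is correct and follows essentially the same approach as the paper: both reduce to Theorem~\ref{laplacian_summation} by setting $\Lambda = \mathbb V \cap \ent^{|E_\Gamma|}$, choosing $\ell_n = n\mbf x_0$ for some $\mbf x_0 \in \mathbb X_n$, identifying $\mathbb X_n$ with $(\Lambda + \ell_n)\cap nK$ via scaling, and handling the bipartite case by the diagonal $\pm 1$ conjugation $\tilde D = SD$. The only difference is that for the identity $\det(\Lambda) = \sqrt{\tau(\Gamma)}$ the paper simply cites Lemma~14.7.3 of Godsil--Royle~\cite{GR01} for connected $\Gamma$ and then passes to components, whereas you sketch the standard Cauchy--Binet/fundamental-cycle-basis proof of that same lemma directly; this is the same content, just expanded rather than cited.
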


\begin{proof}[Proof of Proposition~\ref{prop:laplacian_summation_gamma}]

We start by proving the more general result where $\Gamma$ is not bipartite in which we use the signed incidence matrix $\tilde{D}$.

Let $\Lambda = \mathbb V \cap \ent^{|E_\Gamma|}$ so that $\Lambda$ is the set of
all integer solutions of $\tilde D\mbf x = \mbf 0$. Clearly, $\Lambda$ is a
lattice that spans $\mathbb V$. As it spans $\mathbb V$, we have $\dim \mathbb V
= \rank\Lambda = r$. As $\Gamma$ is nonempty, $D$ is not the zero matrix and so
$r \ge 1$.

Set $\mathbb W = \{\mbf x \, : \, \tilde{D}\mbf{x}= \mbf{y}\}$. As we are guaranteed $\tilde{D}\mbf{x}= \mbf{y}$ is consistent, there is some $\mbf w \in \real^N$ such that $\tilde{D}\mbf{w}= \mbf{y}$. For any $\mbf v \in \mathbb V$ we have
\[ \tilde{D}(\mbf v + \mbf w) = \tilde{D}\mbf v + \tilde{D}\mbf w = 0 + \mbf{y} = \mbf y \]
so $\mathbb V + \mbf w \subseteq \mathbb W$. Furthermore, for any $\mbf x \in \mathbb W$ we have
\[ \tilde{D}(\mbf x - \mbf w) = \tilde{D}\mbf x - \tilde{D}\mbf w = \mbf{y} - \mbf{y}= 0 \]
so $\mbf x - \mbf w \in \mathbb V$ and $\mbf x = (\mbf x - \mbf w) + \mbf w \in \mathbb V + \mbf w$. We conclude $\mathbb W = \mathbb V + \mbf w$ is an affine subspace parallel to $\mathbb V$.

Note that the conditions on $K, \phi$ and $\psi$ in Proposition~\ref{prop:laplacian_summation_gamma} exactly match those of Theorem~\ref{laplacian_summation} with $\mbf{\hat{x}}$ replaced with $\mbf{z}_0$.

For each positive integer $n$ we have that $\mathbb X_n$ is nonempty, so choose $\mbf{x}_n \in \mathbb X_n$ and set $\ell_n = n\mbf{x}_n$. Then $\ell_n/n = \mbf{x}_n$ satisfies $\tilde{D}\mbf{x}_n = \mbf y$ by the definition of $\mathbb X_n$ so $\ell_n/n \in \mathbb W$.

Let $b_n = c_n$ and define $a_n: (\Lambda + \ell_n) \cap nK \to \real$ by $a_n(\ell) = T(\ell/n)$. To see $a_n$ is well-defined, let $\ell \in (\Lambda + \ell_n) \cap nK$ and take $\mbf x = \frac{1}{n}\ell$. Then certainly $\mbf x \in K$. As $\ell_n = n\mbf{x}_n$, we have $\mbf x \in \frac{1}{n}\Lambda + \mbf{x}_n$, and as $\Lambda \subseteq \ent^{|E_\Gamma|}$ and $\mbf{x}_n \in \mathbb X \subseteq \frac{1}{n}\ent$ we get $\mbf x \in \frac{1}{n} \ent$ as well. Finally, setting $\mbf{x} = \frac{1}{n}\lambda + \mbf{x}_n$ for some $\lambda \in \Lambda$, we have
\[ \tilde{D}\mbf x = \tilde{D}(\tfrac{1}{n}\lambda + \mbf{x}_n) = \tfrac{1}{n} \tilde{D} \lambda + \tilde{D}\mbf{x}_n = 0+\mbf y = \mbf y \]
from which we conclude $\frac{1}{n}\ell = \mbf x \in \mathbb{X}_n$ and $a_n$ is
well-defined. Furthermore,
\[ a_n(\ell) = T_n(\mbf x) = O(c_ne^{n\phi(\mbf x)+o(n)}) =
O(b_Ne^{n\phi(\ell/n)+o(n)}), \quad \ell \in (\Lambda + \ell_n) \cap nK \]
and
\[ a_n(\ell) = T_n(\mbf x) = c_n(\psi(\mbf x)+o(1))e^{n\phi(\mbf{x})} = b_n(\psi(\ell/n) + o(1)) e^{n\phi(\ell/n)}, \quad \ell \in (\Lambda +\ell_n) \cap nK_1 \]
uniformly for $\ell$ as required.

Thus as Proposition~\ref{prop:laplacian_summation_gamma} requires $\det(-H|_{\mathbb V}) \neq 0$, we apply Theorem~\ref{laplacian_summation} to get
\begin{align*}
\sum_{x \in \mathbb X_n} T_n(\mbf x) &=  \sum_{\ell\in(\Lambda+\ell_n)\cap nK}a_n(\ell)\\
	&\sim \frac{(2\pi)^{r/2}\psi(\mbf z_0)}{\det(\Lambda)\det(-H|_{\mathbb V})^{1/2}}b_nn^{r/2}e^{n\phi(\mbf z_0)}\\
	&=\frac{\psi(\mbf{\hat{x}})}{\det(\Lambda)\det(-H|_{\mathbb V})^{1/2}}(2\pi n)^{r/2}c_ne^{n\phi(\mbf{\hat x})}
\end{align*}
All that remains to complete the proof is to show $\det(\Lambda) = \tau(\Gamma)^{1/2}$. Lemma 14.7.3 in \cite{GR01} gives this result in the case where $\Gamma$ is connected. To extend the result to disconnected $\Gamma$ we can simply apply the result to each component.

Lastly, we consider the case where $\Gamma$ is bipartite in which we claim we may use the unsigned incidence matrix $D$ rather than $\tilde{D}$. Select one of the parts, say $L \subseteq V_\Gamma$, and orient $E_\Gamma$ from $L$ toward the other part $R = V_\Gamma \setminus L$. Then the coefficients in each of the first $|L|$ rows of $\tilde{D}$ are positive while the last $|R|$ rows contain all negative entries. Thus one can obtain $D$ from $\tilde D$ by multiplying the last $R$ rows by $-1$. These are elementary row operations and do not change the kernel, so we conclude $\Ker D = \Ker \tilde{D}$ and our definition of $\mathbb V$ is unchanged. Now suppose $\mbf y$ is the vector we're given such that $D \mbf x = \mbf y$ is a consistent linear system. Define $\mbf{\tilde{y}}$ by
\[ \tilde{y}_i = \begin{cases} y_i & 1 \le i \le |L| \\ -y_i & |L|+1 \le i \le |L|+ |R|\end{cases} \]
Then $\tilde{D}\mbf x = \mbf{\tilde{y}}$ if and only if $D\mbf x = \mbf y$ so we proceed with the rest of the proof above replacing $\mbf y$ with $\mbf{\hat{y}}$.

\end{proof}

We prove an additional result that demonstrates all but $o(1)$ of the weight of the sums of such functions occurs very close to the maximum value. 

\begin{proposition}\label{prop:central_sum}
Suppose the same set of conditions hold as in Proposition~\ref{prop:laplacian_summation_gamma}. In addition, suppose there is $N > 0$ such that $\psi(x) \le n^N$ for each $\mbf x \in K_1$. For any $\gamma > 0$, define $\mathbb{Y}_n(\gamma) \subseteq \mathbb{X}_n$ as
\[ \mathbb{Y}_n(\gamma) = \left\{ \mbf x \in \mathbb{X}_n : ||\mbf{\hat{x}} - \mbf x||_{\infty} < \gamma \frac{\log n}{\sqrt{n}} \right\}. \]
Then
\[ \sum_{\mbf x \in \mathbb{Y}_n(\gamma)} T_n(\mbf x)  \sim \sum_{\mbf x \in \mathbb{X}_n} T_n(\mbf x). \]
\end{proposition}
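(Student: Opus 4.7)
The plan is to invoke Proposition~\ref{prop:laplacian_summation_gamma} to write $\sum_{\mbf x \in \mathbb X_n} T_n(\mbf x) \sim \kappa\, c_n n^{r/2} e^{n\phi(\mbf{\hat x})}$ for the explicit positive constant $\kappa$ given there, and then prove that the tail $\sum_{\mbf x \in \mathbb X_n \setminus \mathbb Y_n(\gamma)} T_n(\mbf x)$ is $o(c_n n^{r/2} e^{n\phi(\mbf{\hat x})})$. The conclusion will then follow by subtraction. To bound the tail I would partition $\mathbb X_n \setminus \mathbb Y_n(\gamma)$ into a \emph{far} part $\mathcal F_n := \mathbb X_n \setminus K_1$ and a \emph{near} part $\mathcal N_n := (\mathbb X_n \cap K_1) \setminus \mathbb Y_n(\gamma)$, and bound the two contributions separately. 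A common ingredient is the trivial polynomial bound $|\mathbb X_n| \le (Cn)^{|E_\Gamma|}$, which holds because $\mathbb X_n \subseteq K \cap \frac{1}{n} \ent^{|E_\Gamma|}$ and $K$ is bounded.

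For $\mathcal F_n$, the key fact is uniqueness of the maximizer: since $\mbf{\hat x}$ is the unique maximizer of $\phi$ on the compact feasible set $K \cap \{\mbf x : D\mbf x = \mbf y\}$ and $K_1$ is a neighbourhood of $\mbf{\hat x}$, continuity of $\phi$ and compactness of the feasible set minus $K_1$ yield a constant $\delta > 0$ with $\phi(\mbf x) \le \phi(\mbf{\hat x}) - \delta$ for every $\mbf x \in \mathcal F_n$. Combining this with the hypothesis $T_n(\mbf x) = O(c_n e^{n\phi(\mbf x) + o(n)})$ uniformly on $\mathbb X_n$ and the polynomial bound on $|\mathbb X_n|$, the total contribution from $\mathcal F_n$ is at most $O(n^{|E_\Gamma|}) \, c_n e^{n\phi(\mbf{\hat x}) - \delta n + o(n)}$, which is exponentially smaller than $c_n n^{r/2} e^{n\phi(\mbf{\hat x})}$.

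For $\mathcal N_n$, I would use a quadratic Taylor estimate for $\phi$ at $\mbf{\hat x}$. Because $\mbf{\hat x}$ is a constrained local maximum on the affine subspace $\mbf{\hat x} + \mathbb V$ and $\det(-H|_{\mathbb V}) \ne 0$, the restriction of $-H$ to $\mathbb V$ is positive definite, and (after possibly shrinking $K_1$) there exists a constant $c > 0$ such that
\[
\phi(\mbf x) \le \phi(\mbf{\hat x}) - c\, \|\mbf x - \mbf{\hat x}\|_\infty^2 \quad \text{for every } \mbf x \in K_1 \text{ with } \mbf x - \mbf{\hat x} \in \mathbb V.
\]
Every $\mbf x \in \mathcal N_n$ lies in $K_1$, satisfies $\mbf x - \mbf{\hat x} \in \mathbb V$ (from the constraint $D\mbf x = \mbf y$), and obeys $\|\mbf x - \mbf{\hat x}\|_\infty \ge \gamma(\log n)/\sqrt n$; thus $n\phi(\mbf x) \le n\phi(\mbf{\hat x}) - c\gamma^2 (\log n)^2$. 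Using the refined estimate $T_n(\mbf x) = c_n(\psi(\mbf x)+o(1)) e^{n\phi(\mbf x)}$ valid on $K_1$ together with the hypothesis $\psi(\mbf x) \le n^N$, the contribution from $\mathcal N_n$ is at most $O(n^{N + |E_\Gamma|})\, c_n e^{n\phi(\mbf{\hat x})} \, e^{-c\gamma^2 (\log n)^2}$, which is $o(c_n n^{r/2} e^{n\phi(\mbf{\hat x})})$ since super-polynomial decay beats any power of $n$.

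The main obstacle is isolating the quadratic Taylor bound used in the near region: one must verify that $\det(-H|_{\mathbb V}) \ne 0$ together with the second-order necessary conditions for a constrained maximum force $-H|_{\mathbb V}$ to be positive definite, and then absorb the cubic remainder of the Taylor expansion into the quadratic main term uniformly on some neighbourhood of $\mbf{\hat x}$ contained in $K_1$. Once this uniform coercive estimate is in hand, the two tail bounds above combine immediately with Proposition~\ref{prop:laplacian_summation_gamma} to give $\sum_{\mbf x \in \mathbb Y_n(\gamma)} T_n(\mbf x) \sim \sum_{\mbf x \in \mathbb X_n} T_n(\mbf x)$, as required.
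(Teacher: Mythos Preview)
Your proposal is correct and follows essentially the same strategy as the paper: split the tail into a far region (handled by compactness and the uniqueness of the maximizer to get a fixed gap $\delta$ in $\phi$) and a near annulus (handled by a quadratic Taylor bound giving a $(\log n)^2$ drop in the exponent), then combine with the polynomial count of lattice points. The only cosmetic difference is that the paper partitions at an $\varepsilon$-ball around $\mbf{\hat x}$ rather than at the boundary of $K_1$; this sidesteps the need to ``shrink $K_1$'' that you flag, but the content is identical, and your remark about needing positive definiteness of $-H|_{\mathbb V}$ is in fact more careful than the paper's own phrasing.
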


\begin{proof}
For any $\gamma > 0$, $\mbf{\hat{x}} \in \mathbb{Y}_n(\gamma)$ maximizes $\phi$ in $K^\circ$ and thus it suffices to show
\[ \sum_{\mbf x \in \mathbb{X}_n \setminus \mathbb{Y}_n(\gamma)} T_n(\mbf x) = o\left(e^{n \phi(\mbf{\hat{x}})}\right). \]
As $\mbf{\hat{x}}$ is a local maximum, $\nabla \phi = 0$ at $\mbf{\hat x}$. By Taylor's Theorem, as the Hessian matrix of $\phi$ is nonvanishing (because $\det(-H|_{\mathbb{V}}) \ne 0$), there is $\varepsilon > 0$ such that if $\delta := ||\mbf{\hat x} - \mbf x||_{\infty} < \varepsilon$ then $ \phi(\mbf x) = \phi(\mbf{\hat x}) - \Theta(\delta^2)$. We partition $\mathbb{X}_n \setminus \mathbb{Y}_n(\gamma) = A \cup B$ where $A$ is the annulus consisting of $\mbf x$ satisfying
\[ \gamma\frac{\log n}{\sqrt{n}} < ||\mbf{\hat x} - \mbf x||_{\infty} < \varepsilon. \]
For any $\mbf x \in A$, we have
\[ \phi(\mbf x) =\phi(\mbf{\hat x}) - \Theta(\delta^2) = \phi(\mbf{\hat x}) - \Omega\left(\gamma^2\frac{\log^2 n}{n}\right) \]
 and therefore
\begin{align*}
\sum_{\mbf x \in A} T_n(\mbf x) &= \sum_{\mbf x \in A} c_n(\psi(\mbf x)+o(1)) \exp\left(n\left(\phi(\mbf{\hat x})-\Omega\left(\gamma^2\frac{\log^2 n}{n}\right)\right)\right)\\
	&= \sum_{\mbf x \in A} c_n(\psi(\mbf x)+o(1)) n^{-\gamma^2\log n} e^{n\phi(\mbf{\hat x})}
\end{align*}
As there are at most $|E_{\gamma}|$ many terms in the sum and $n^{|E_{\Gamma}|+N} = o\left(n^{\gamma^2\log n}\right)$
we see
\[ \sum_{\mbf x \in A} T_n(\mbf x) = o\left(e^{n \phi(\mbf{\hat{x}})}\right). \]

Now consider $\mbf x \in B$. These $\mbf x$ satisfy $||\mbf x - \mbf{\hat x}||_\infty \ge \varepsilon$. As $\phi$ is a continuous function it attains a maximum value on the compact set $K \setminus B_{\varepsilon}(\mbf{\hat{x}})$. But $\phi$ has a unique maximum in $K^\circ$ at $\mbf{\hat x} \notin B$. Therefore there is some $\alpha > 0$ such that
\[ \max_{\mbf x \in B} \phi(\mbf x) < \phi(\mbf{\hat x}) - \alpha. \]
Therefore
\begin{align*}
\sum_{\mbf x \in B} T_n(\mbf x) &< \sum_{\mbf x \in B} c_n(\psi(\mbf x)+o(1)) \exp(n(\phi(\mbf{\hat x})-\alpha))\\
	&= O\left(n^{|E_{\gamma}|+N}\right) e^{-n\alpha} e^{n\phi(\mbf{\hat x})}\\
	&= o\left(e^{n \phi(\mbf{\hat{x}})}\right). 
\end{align*}
\end{proof}

%%%
% First Moment
%%%
\section{First moment ingredients}
\label{sec:first}

In this section we give two first moment arguments, one for $X$, the number of $k$-colourings of a random lift of $K_{d+1}$, and another for $Y$, the number of strongly equitable $k$-colourings of a random lift of an arbitrary $d$-regular graph.

\subsection{Colouring Optimization and Proof of Proposition~\ref{prop:EX}}\label{ssec:opti1}

Let $X$ be a random variable denoting the number of proper $k$-colourings of a
random lift $L$ of $K_{d+1}$. The main goal of this section is to prove
Proposition~\ref{prop:EX} by providing an upper bound on $\ex X$. Additionally, some
of the ideas developed here will be utilized again in
Section~\ref{ssec:equi_expectation} when we study equitable $k$-colourings of a
random lift of a general $d$-regular graph. Throughout this section, we use $V$
and $E$ to denote the vertex and edge sets of $K_{d+1}$, respectively, so in
particular $|V|=d+1$ and $|E|=\binom{d+1}{2}$. For convenience, we fix an
arbitrary orientation of the edges in $E$, so that for each pair of different
vertices $v,v'\in V$ exactly one of $vv'$ and $v'v$ belongs to $E$.

To calculate $\ex X$, we will count the number of pairs $(L,C)$ such that $L$ is a lift of $K_{d+1}$ and $C$ is a proper $k$-colouring of $L$, and then divide by the
number of lifts of $K_{d+1}$.
First observe that we can build any lift $L$ of $K_{d+1}$ by selecting a perfect
matching between the fibers $\Pi^{-1}(v)$ and $\Pi^{-1}(v')$, for each edge
$vv'\in E$. As a result, there are exactly $n!^{|E|}$ possible lifts.
Let us fix a pair $(L,C)$, where $L$ is a lift of $K_{d+1}$ and $C$ is a proper
$k$-colouring of $L$. For each $v \in V$, we consider a (row) vector
$a_v=(a_{v,i})_{i\in [k]}$, where $a_{v,i}$ denotes the proportion of vertices
in fiber $\Pi^{-1}(v)$ that receive colour $i$. By construction, the entries of
each $a_v$ are in $\frac{1}{n}\ent$ and define a probability distribution on the
vertices in the fiber, which is to say
\begin{equation}
a_{v,i} \ge 0 \quad \forall v\in V, i\in[k]
\and
\sum_{i\in[k]} a_{v,i} = 1 \quad \forall v\in V.
\label{eq:acons1}
\end{equation}
We write $\mbf{a} = (a_v)_{v\in V}\in \frac{1}{n}\ent^{k|V|}$, which we also
regard as a $|V|\times k$ stochastic matrix where each row $a_v\in
\frac{1}{n}\ent^k$ is a probability distribution. Similarly, for each $e=vv'\in
E$, we define $b_e = (b_{e,i,i'})_{i,i'\in[k], i\ne i'}$, where each
$b_{e,i,i'}$ denotes the proportion of edges in $\Pi^{-1}(e)$ that connect a
vertex of colour $i$ in $\Pi^{-1}(v)$ to a vertex of colour $i'$ in
$\Pi^{-1}(v')$. We write $\mbf{b} = (b_e)_{e \in E}$. The entries of each $b_e$
must be in $\frac{1}{n}\ent$ and satisfy
\begin{align}
b_{e,i,i'} &\ge 0 \quad \forall e\in E, i,i'\in[k], i\ne i'
\notag
\\
\sum_{i'\ne i} b_{e,i,i'} &= a_{v,i} \quad \forall e=vv'\in E, i\in [k]
\notag
\\
\sum_{i\ne i'} b_{e,i,i'} &= a_{v',i'} \quad \forall e=vv'\in E, i'\in [k].
\label{eq:abcons1}
\end{align}
For each choice of parameters $\mbf a \in \frac{1}{n}\ent^{k|V|}$ and $\mbf b\in \frac{1}{n}\ent^{k(k-1)|E|}$ satisfying~\eqref{eq:acons1} and~\eqref{eq:abcons1}, we will enumerate all pairs $(L,C)$ which agree on those parameters and then sum over all possible choices of $\mbf a$ and $\mbf b$.

Given $\mbf{a}$ and $\mbf{b}$, we generate a pair $(L,C)$ in three steps. First, we assign colours to the vertices of $L$ so that, for each $v\in V$ and $i\in[k]$, exactly $a_{v,i}n$ vertices in $\Pi^{-1}(v)$ receive colour $i$. There are
\[
\prod_{v\in V} \frac{n!}{\prod_{i\in[k]}(a_{v,i} n)!} = \prod_{v\in V}\binom{n}{a_v n}
\]
ways to do this. (Recall the definitions for factorials and binomials of vectors in \eqref{eq:factorial}.)

Next, for every $e=vv' \in E$ and distinct colours $i,i'\in[k]$, we need to decide which sets of $b_{e,i,i'}n$ vertices in $\Pi^{-1}(v)$ and $\Pi^{-1}(v')$ will be matched. We can do that in
\[
\proda{e\in E\\e=vv'} \bigg( \prod_{i\in[k]} \frac{(a_{v,i} n)!}{ \prod_{i'\ne i} (b_{e,i,i'} n)!} \bigg) \bigg( \prod_{i'\in[k]} \frac{(a_{v',i'} n)!}{ \prod_{i\ne i'} (b_{e,i,i'} n)!} \bigg)
= \proda{e\in E\\e=vv'} \frac{(a_v n)!}{(b_{e} n)!} \frac{(a_{v'} n)!}{(b_{e} n)!}
\]
many ways.
Finally, we need to choose a perfect matching between these sets, which can be done in
\[
\proda{e\in E}  \proda{i,i'\in[k]\\ i\ne i'}  (b_{e,i,i'} n)! = \proda{e\in E}  (b_e n)! 
\]
different ways. Putting everything together, we get
\begin{equation}
\ex X = \frac{1}{n!^{|E|}} \sum_{\mbf{a}, \mbf{b}} \prod_{v
\in V} \binom{n}{a_v n}
\proda{e\in E\\e=vv'} \frac{(a_v n)! (a_{v'} n)!}{(b_{e} n)!},
\label{eq:EX_pre_stirling}
\end{equation}
where the sum is over all $\mbf a \in \frac{1}{n}\ent^{k|V|}$ and $\mbf b\in \frac{1}{n}\ent^{k(k-1)|E|}$ satisfying~\eqref{eq:acons1} and~\eqref{eq:abcons1}.
Recall that one can write Stirling's formula as $x! = \xi(x)(x/e)^x$, where $\xi$ is a function satisfying $\xi(x)\sim\sqrt{2\pi x}$ as $x\to\infty$ and $\xi(x)\ge1$ for all $x\ge0$.
Then, using~\eqref{eq:acons1} and~\eqref{eq:abcons1}, and after some tedious but simple calculations,
\begin{align}
\ex X &= \big(\xi(n) (n/e)^n\big)^{-|E|} \sum_{\mbf{a}, \mbf{b}} \prod_{v
\in V} \frac{\xi(n) (n/e)^n}{ \prod_{i} \xi(a_{v,i} n) (a_{v,i} n/e)^{a_{v,i} n}} \quad \times
\notag\\&\qquad\qquad \times\quad
\proda{e\in E\\e=vv'} \frac{ \big( \prod_{i}\xi(a_{v,i} n) (a_{v,i} n/e)^{a_{v,i} n} \big) \big( \prod_{i'} \xi(a_{v',i'} n) (a_{v',i'} n/e)^{a_{v',i'} n} \big)}{\proda{i,i'\\i\ne i'}\xi(b_{e,i,i'} n) (b_{e,i,i'} n/e)^{b_{e,i,i'} n}}
\notag\\
	&= \sum_{\mbf{a}, \mbf{b}} \prod_{v \in V} \frac{\xi(n) }{ \prod_{i} \xi(a_{v,i} n) {a_{v,i}}^{a_{v,i} n}} \quad \times \notag\\
	&\qquad\qquad\times\quad
\proda{e\in E\\e=vv'} \frac{ \big( \prod_{i}\xi(a_{v,i} n) \big) \big( \prod_{i'} \xi(a_{v',i'} n) \big) \left( \proda{i,i'\\i\ne i'} (a_{v,i} a_{v',i'})^{b_{e,i,i'} n} \right)}
{\xi(n)  \proda{i,i'\\i\ne i'} \xi(b_{e,i,i'} n) {b_{e,i,i'}}^{b_{e,i,i'} n}}
\notag\\
&= \sum_{\mbf{a}, \mbf{b}} p(\mbf a, \mbf b, n) e^{n f(\mbf a, \mbf b)},
\label{eq:EX}
\end{align}
where
\begin{equation}
p(\mbf a, \mbf b, n) = \prod_{v \in V} \frac{\xi(n) }{ \prod_{i} \xi(a_{v,i} n)}
\proda{e\in E\\e=vv'} \frac{ \big( \prod_{i}\xi(a_{v,i} n) \big) \big( \prod_{i'} \xi(a_{v',i'} n) \big)} {\xi(n) \proda{i,i'\\i\ne i'}\xi(b_{e,i,i'} n)}
\label{eq:pab}
\end{equation}
and
\begin{equation}
f(\mbf a, \mbf b) = 
- \sum_{v \in V} \sum_{i}  a_{v,i} \log a_{v,i} + 
\suma{e\in E\\e=vv'} \suma{i,i'\\i\ne i'}  b_{e,i,i'} \log \left(\frac{ a_{v,i} a_{v',i'}} {b_{e,i,i'}}\right).
\label{eq:fab}
\end{equation}
In order to bound the exponential behaviour of $\ex X$, we will maximize $f(\mbf a, \mbf b)$ and show that the main contribution to the sum in~\eqref{eq:EX} comes from the term where all the parameters are equal.
Let $\hat a = (\hat a_i)_{i\in[k]}$ with all $\hat a_i=1/k$, and $\hat b = (\hat b_{i,i'})_{i,i'\in[k],i\ne i'}$ with all $\hat b_{i,i'}=1/k(k-1)$.
Also, define $\mbf{\hat a} = (\hat a)_{v\in V}$ and $\mbf{\hat b} = (\hat b)_{e\in E}$.
Note that $\mbf{\hat a}, \mbf{\hat  b}$ are only a valid set of parameters provided that $k(k-1)\mid n$, since otherwise their entries are not in $\frac{1}{n}\ent$.  However, since we are only interested in an upper bound on $\ex X$ we may proceed with these values by bounding $\ex X$ in a larger space that contains $\frac{1}{n}\ent$.
\begin{proposition}\label{prop:lower_bound_optimization}
Let $d$ and $k$ be integers such that $d \ge 2$ and $\frac{d^2-1}{d \log d} < 2 (k-1)$.
Let $f(\mbf a, \mbf b)$ be defined as in~\eqref{eq:fab}. Then the maximum of $f(\mbf a, \mbf b)$ for $\mbf a\in\real^{k|V|}$ and $\mbf b\in\real^{k(k-1)|E|}$ subject to~\eqref{eq:acons1} and~\eqref{eq:abcons1}
is uniquely attained at the point in which $\mbf a = \mbf{\hat a}$ and $\mbf b = \mbf{\hat b}$, and equals
\[
f(\mbf{\hat a}, \mbf{\hat  b}) = \log\left( \frac{ (k-1)^d }{k^{d-2}}\right)^{(d+1)/2}.
\]
\end{proposition}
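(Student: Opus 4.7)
The plan is to eliminate $\mbf b$ from $f$ via Jensen's inequality and reduce the remaining bound in $\mbf a$ alone to Proposition~\ref{prop:achlioptas-naor_corollary}. Throughout, view $\mbf a = (a_{v,i})$ as a $(d+1)\times k$ row-stochastic matrix $M$ with rows $a_v$, so that, in the notation of Section~\ref{sec:tools}, $h(M) = \sum_v H(a_v)$ (where $H$ denotes entropy) and $\rho(M) = \sum_v \sum_i a_{v,i}^2$. Set $q = d+1$, $|V| = d+1$, $|E| = d(d+1)/2$.

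First, optimize over $\mbf b$ for fixed $\mbf a$. The marginal conditions in~\eqref{eq:abcons1} force $\sum_{i\ne i'} b_{e,i,i'}=1$ for each edge $e=vv'$, so Jensen's inequality gives
\[
\sum_{i\ne i'} b_{e,i,i'}\log\frac{a_{v,i}a_{v',i'}}{b_{e,i,i'}} \;\le\; \log\Bigl(\sum_{i\ne i'}a_{v,i}a_{v',i'}\Bigr) \;=\; \log\bigl(1-\langle a_v,a_{v'}\rangle\bigr),
\]
with equality iff $b_{e,i,i'}$ is proportional to $a_{v,i}a_{v',i'}$. Summing over edges yields $f(\mbf a,\mbf b) \le h(M) + \sum_{\{v,v'\}\in E} \log\bigl(1-\langle a_v,a_{v'}\rangle\bigr)$.

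Next, collapse the edge sum. Using concavity of $\log(1-x)$, Jensen's inequality gives
\[
\sum_{\{v,v'\}\in E} \log(1-\langle a_v,a_{v'}\rangle) \;\le\; |E|\log\bigl(1-S/|E|\bigr), \quad S:=\sum_{\{v,v'\}\in E}\langle a_v,a_{v'}\rangle.
\]
From the identity $\sum_i(\sum_v a_{v,i})^2 = \rho(M) + 2S$ and Cauchy--Schwarz $\sum_i(\sum_v a_{v,i})^2 \ge |V|^2/k$, one obtains $S \ge (|V|^2/k - \rho(M))/2$ (non-negative since $k \le d+1$ forces $\rho(M) \le |V| \le |V|^2/k$). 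Substituting and simplifying with $|V|=q$, $|E|=d q/2$ gives
\[
f(\mbf a,\mbf b) \;\le\; h(M) + |E|\log\frac{kq - k - q + (k/q)\rho(M)}{kd}.
\]

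Now apply Proposition~\ref{prop:achlioptas-naor_corollary} to $M$ with $c = d/2 = |E|/q$. The required condition $c < \tfrac{k-1}{q-1}c_q = \tfrac{(k-1)d^2\log d}{d^2-1}$ unravels to $\frac{d^2-1}{d\log d} < 2(k-1)$, which is precisely the hypothesis. Multiplying Proposition~\ref{prop:achlioptas-naor_corollary} through by $q$ yields $h(M) + |E|\log\bigl(kq-k-q+(k/q)\rho(M)\bigr) \le (d+1)\log k + |E|\log(d(k-1))$; subtracting $|E|\log(kd)$ from both sides of the two displayed inequalities cancels the $\log\bigl(kq-k-q+(k/q)\rho(M)\bigr)$ term and leaves
\[
f(\mbf a,\mbf b) \;\le\; (d+1)\log k + |E|\log\frac{k-1}{k} \;=\; \frac{d+1}{2}\bigl(d\log(k-1) - (d-2)\log k\bigr),
\]
which is precisely $\log\bigl((k-1)^d/k^{d-2}\bigr)^{(d+1)/2}$ and matches $f(\mbf{\hat a},\mbf{\hat b})$ by direct computation. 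For uniqueness, the equality clause of Proposition~\ref{prop:achlioptas-naor_corollary} forces $a_{v,i}=1/k$ for all $v,i$, i.e.\ $\mbf a=\mbf{\hat a}$; the equality condition in the first Jensen step then forces $b_{e,i,i'} = \hat b_{i,i'}$. The one delicate point is arranging the coefficients so that the threshold $c < \tfrac{k-1}{q-1}c_q$ at $c = d/2$ translates \emph{exactly} to the hypothesis $\frac{d^2-1}{d\log d} < 2(k-1)$: this alignment is what makes the chain of inequalities above collapse to an equality at $\mbf{\hat a},\mbf{\hat b}$, and explains the somewhat unusual form of the hypothesis.
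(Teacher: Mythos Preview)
Your proof is correct and follows essentially the same route as the paper: first optimize over $\mbf b$ via Gibbs/Jensen, then reduce the edge sum to a function of $\rho(M)$, and finally invoke Proposition~\ref{prop:achlioptas-naor_corollary} with $q=d+1$ and $c=d/2$. The only cosmetic difference is that the paper isolates your second step as a separate lemma (Lemma~\ref{lem:technical_bound}), proving it via AM--GM on the factors $1-\langle a_v,a_{v'}\rangle$ together with the same Cauchy--Schwarz bound $\langle c,c\rangle\ge(d+1)^2/k$; your Jensen-on-$\log(1-x)$ formulation is equivalent.
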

Before proving Proposition~\ref{prop:lower_bound_optimization}, we need the following technical result.

\begin{lemma}\label{lem:technical_bound}
Let $d,k \ge 1$ be integers. For any $\mbf a = (a_v)_{v\in V}$ with $a_v=(a_{v,i})_{i\in[k]}\in\real^k$ satisfying~\eqref{eq:acons1},
\[ 
\suma{e\in E\\e=vv'} \log\big(1-\inp{a_v,a_{v'}}\big) \le \binom{d+1}{2} \log \left( 1 - \frac{d+1}{dk} +  \frac{1}{d(d+1)}\sum_{v \in V} \inp{a_v, a_v}  \right).
\]
\end{lemma}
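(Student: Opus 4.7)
The plan is to combine Jensen's inequality (to push the sum inside the logarithm) with an elementary Cauchy-Schwarz-type bound on the sum of inner products.

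First, since $\log$ is concave, Jensen's inequality applied to the $\binom{d+1}{2}$ summands on the left-hand side gives
\[
\sum_{\substack{e\in E\\ e=vv'}} \log\bigl(1-\inp{a_v,a_{v'}}\bigr) \le \binom{d+1}{2} \log\left( 1 - \frac{1}{\binom{d+1}{2}} \sum_{\substack{e\in E\\ e=vv'}} \inp{a_v,a_{v'}} \right).
\]
Since $\log$ is monotone increasing, it then suffices to verify the numeric inequality
\[
-\frac{1}{\binom{d+1}{2}} \sum_{\substack{e\in E\\ e=vv'}} \inp{a_v,a_{v'}} \;\le\; -\frac{d+1}{dk} + \frac{1}{d(d+1)} \sum_{v\in V} \inp{a_v,a_v}.
\]

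To handle the sum of pairwise inner products, I would introduce $S = \sum_{v\in V} a_v \in \real^k$. Since $K_{d+1}$ is a complete graph, every unordered pair of distinct vertices contributes once to $E$, and therefore
\[
\inp{S,S} = \sum_{v\in V} \inp{a_v,a_v} + 2 \sum_{\substack{e\in E\\ e=vv'}} \inp{a_v,a_{v'}}.
\]
The crucial observation is that $S$ has nonnegative coordinates summing to $d+1$ (by~\eqref{eq:acons1}), so $S/(d+1)$ is a probability distribution on $[k]$. By the standard bound $\sum_i p_i^2 \ge 1/k$ for any such distribution (equivalently Cauchy-Schwarz), we obtain $\inp{S,S} \ge (d+1)^2/k$, and hence
\[
\sum_{\substack{e\in E\\ e=vv'}} \inp{a_v,a_{v'}} \;\ge\; \frac{1}{2}\left( \frac{(d+1)^2}{k} - \sum_{v\in V} \inp{a_v,a_v} \right).
\]

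Plugging this lower bound into the numeric inequality above and simplifying (using $\binom{d+1}{2} = d(d+1)/2$) yields exactly
\[
-\frac{1}{\binom{d+1}{2}} \sum_{\substack{e\in E\\ e=vv'}} \inp{a_v,a_{v'}} \le -\frac{d+1}{dk} + \frac{1}{d(d+1)} \sum_{v\in V} \inp{a_v,a_v},
\]
which combined with the Jensen step completes the proof. No step is particularly delicate; the only thing to be careful about is the arithmetic relating $\binom{d+1}{2}$ to $d(d+1)$ when the factor of $1/2$ from the identity for $\inp{S,S}$ is absorbed.
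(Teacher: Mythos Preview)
Your proof is correct and follows essentially the same approach as the paper: the paper writes the concavity step as an AM--GM inequality on the factors $1-\inp{a_v,a_{v'}}$ (which is equivalent to your Jensen step), and uses the same identity $\inp{S,S}=\sum_v\inp{a_v,a_v}+2\sum_e\inp{a_v,a_{v'}}$ together with the Cauchy--Schwarz bound $\inp{S,S}\ge(d+1)^2/k$. The only small addition in the paper is the explicit check that each $1-\inp{a_v,a_{v'}}\ge 0$ so that the logarithms are well-defined (possibly $-\infty$).
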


\begin{proof}
Let $c = \sum_{v\in V} a_v$ and $j=(1,\ldots,1)\in\real^{k}$. From~\eqref{eq:acons1}, we have that $\inp{j, a_v}=1$ and thus  $\inp{j, c}=d+1$.
In particular,
\begin{equation}
1-\inp{a_v,a_{v'}} \ge 1- \inp{a_v,j} = 0,
\label{eq:pos}
\end{equation}
so the left-hand side of the inequality in the lemma is well-defined (but possibly $-\infty$).
Moreover, by the Cauchy-Schwarz inequality,
\[
\inp{c, c} = \frac{1}{k} \inp{j, j} \inp{c, c} \ge  \frac{1}{k} \inp{j,c}^2 = \frac{(d+1)^2}{k}.
\]
Hence, by writting
\[
\rho = \sum_{v\in V} \inp{a_v, a_v},
\]
we obtain
\begin{equation}
\suma{v,v'\in V\\v\neq v'} \inp{a_v, a_{v'}} = \sum_{v,v'\in V} \inp{a_v, a_{v'}} - \sum_{v\in V} \inp{a_v, a_v} = \inp{c,c}  - \rho \ge  \frac{(d+1)^2}{k}  - \rho.
\label{eq:inpavs}
\end{equation}
Before we proceed to prove our main inequality, recall that $E$ is the edge set of $K_{d+1}$ (with a fixed orientation). Thus we can write
\begin{align*}
\suma{e\in E\\e=vv'} \log\big(1-\inp{a_v,a_{v'}}\big)
&= \frac{1}{2} \suma{v,v'\in V\\v\ne v'} \log\big(1-\inp{a_v,a_{v'}}\big)
\\
&= \frac{1}{2} \log  \proda{v,v'\in V\\v\ne v'} \big(1-\inp{a_v,a_{v'}}\big)
\\
&= \binom{d+1}{2} \log  \left( \proda{v,v'\in V\\v\ne v'} \big(1-\inp{a_v,a_{v'}}\big) \right)^{1/d(d+1)}.
\end{align*}
We apply the inequality of arithmetic and geometric means to all $1-\inp{a_v,a_{v'}}$ above (which are non-negative, as observed in~\eqref{eq:pos}), and conclude that
\[
\suma{e\in E\\e=vv'} \log\big(1-\inp{a_v,a_{v'}}\big)
\le \binom{d+1}{2} \log  \left( \frac{1}{d(d+1)} \suma{v,v'\in V\\v\ne v'} \big(1-\inp{a_v,a_{v'}}\big) \right).
\]
Combining this and~\eqref{eq:inpavs} yields
\[
\suma{e\in E\\e=vv'} \log\big(1-\inp{a_v,a_{v'}}\big)
\le \binom{d+1}{2} \log  \left( \frac{1}{d(d+1)}  \Big( d(d+1) - \frac{(d+1)^2}{k}  + \rho \Big) \right),
\]
which completes the proof of the lemma.
\end{proof}

\begin{proof}[Proof of Proposition~\ref{prop:lower_bound_optimization}]
Assume throughout the proof that $\frac{d^2-1}{d \log d} < 2 (k-1)$.
Fix any $\mbf a\in\real^{k|V|}$ satisfying~\eqref{eq:acons1}, and let
\[
g(\mbf a) = h(\mbf a) +
\binom{d+1}{2} \log \left( \frac{d(d+1) - (d+1)^2/k + \rho(\mbf a)}{d(d+1)} \right),
\]
where
\[
h(\mbf a) = - \sum_{v \in V} \sum_{i\in[k]} a_{v,i} \log a_{v,i}
\and
\rho(\mbf a) = \sum_{v \in V} \inp{a_v, a_v}.
\]
We will maximize $f(\mbf a, \mbf b)$ for such fixed $\mbf a$ and with $\mbf b\in\real^{k(k-1)|E|}$ subject only to
\begin{equation}
b_{e,i,i'}\ge0
\and
\suma{i,i'\in[k]\\i\ne i'}b_{e,i,i'}=1.
\label{eq:brelaxedcons1}
\end{equation}
In view of this relaxation, we can regard each $b_e=(b_{e,i,i'})_{i,i'\in[k], i\ne i'}$ as an arbitrary probability distribution, and maximize each term
$\suma{i,i'\\i\ne i'} b_{e,i,i'}\log\left( \frac{a_{v,i}a_{v',i'}}{b_{e,i,i'}}\right)$ in~\eqref{eq:fab} separately.
For each $e=vv'\in E$, we define another probability distribution given by
\[
b^*_{e,i,i'} =  \frac{a_{v,i}a_{v',i'}}{z_e}, \qquad\text{for}\quad i,i'\in[k],i\ne i',
\]
where
\[
z_e = \suma{i,i'\\i\ne i'} a_{v,i}a_{v',i'}
\]
is the normalizing factor, and we write $b^*_e=(b^*_{e,i,i'})_{i,i'\in[k], i\ne i'}$.
Then
\[
\suma{i,i'\\i\ne i'} b_{e,i,i'}\log\left( \frac{a_{v,i}a_{v',i'}}{b_{e,i,i'}}\right) = \log z_e - D_{KL}(b_e \| b^*_e),
\]
where $D_{KL}(b_e \| b^*_e) = \suma{i,i'\\i\ne i'} b_{e,i,i'}\log\left( \frac{b_{e,i,i'}}{b^*_{e,i,i'}}\right)$ is the Kullback-Leibler divergence from $b_e$ to $b^*_e$.
By Gibb's inequality, $D_{KL}(b_e \| b^*_e)\ge0$ with equality iff and only if $b_e = b^*_e$. As a result,
\begin{equation}\label{eq:maxfb}
\max_{\mbf b\text{ s.t.~\eqref{eq:brelaxedcons1}}} f(\mbf a, \mbf b) =  h(\mbf a) + \sum_{e\in E} \log z_e,
\end{equation}
with one unique maximizer at $\mbf b = \mbf{b^*} := (b^*_e)_{e\in E}$. Note that if $\mbf a = \mbf{\hat a}$ then $\mbf{b^*}=\mbf{\hat b}$.
We proceed to bound the right-hand side of~\eqref{eq:maxfb}. In view of~\eqref{eq:acons1}, we can write
\[
\log z_e %= \log \sum_{i\in[k]} a_{v,i} \left( \sum_{i'\in[k]} a_{v',i'} - a_{v',i} \right)
= \log \sum_{i\in[k]} a_{v,i} \left( 1 - a_{v',i} \right)
= \log \left( 1 - \sum_{i\in[k]} a_{v,i} a_{v',i} \right)
= \log ( 1 - \langle a_{v}, a_{v'}\rangle ),
\]
and then, by Lemma~\ref{lem:technical_bound},
\[
\sum_{e\in E} \log z_e = \suma{e\in E\\e=vv'} \log ( 1 - \langle a_{v}, a_{v'}\rangle )
\le \binom{d+1}{2} \log \left( 1 - \frac{d+1}{dk} +  \frac{1}{d(d+1)} \rho(\mbf a)  \right),
\]
so
\[
h(\mbf a) + \sum_{e\in E} \log z_e \le g(\mbf a).
\]
Combining this with~\eqref{eq:maxfb} yields
\begin{equation}\label{eq:maxfb2}
\max_{\mbf  b\text{ s.t.~\eqref{eq:brelaxedcons1}}} f(\mbf a, \mbf b) \le g(\mbf a).
\end{equation}
We will bound $g(\mbf a)$ by applying Proposition~\ref{prop:achlioptas-naor_corollary} to $\mbf a=(a_{v,i})_{v\in V,i\in[k]}$, which is a row-stochastic $(d+1)\times k$ matrix.
Note that our assumption $\frac{d^2-1}{d \log d} < 2 (k-1)$ implies
\[
\frac{d}{2}  <  \frac{k-1}{d} \frac{d^3 \log d}{d^2-1} = \frac{k-1}{d} c_{d+1},
\]
where $c_{d+1}$ is defined as in~\eqref{eq:cq}.
Therefore, Proposition~\ref{prop:achlioptas-naor_corollary} (with $q=d+1$, $c=d/2$ and $A=\mbf a$) yields
\[
\frac{d}{2}\log\left( \frac{d(k-1) + \frac{k}{d+1}\rho(\mbf a) - 1}{d(k-1)} \right) \le \log k - \frac{1}{d+1}h(\mbf a),
\]
with equality iff $\mbf a = \mbf{\hat a}$. After some elementary manipulations, we can rewrite this as
\[
h(\mbf a) + \binom{d+1}{2}\log\left( \frac{d(k-1) + \frac{k}{d+1}\rho(\mbf a) - 1}{dk} \right)
\le (d+1)\log k + \binom{d+1}{2}\log\left( \frac{d(k-1)}{dk} \right),
\]
which immediately implies
\[
h(\mbf a) + \binom{d+1}{2}\log\left( 1 - \frac{d+1}{dk} + \frac{ \rho(\mbf a)}{d(d+1)} \right)
\le (d+1)\log k + \binom{d+1}{2}\log\left( \frac{k-1}{k} \right).
\]
Therefore, noting that $\rho(\mbf{\hat a}) = (d+1)/k$,
\[
g(\mbf a) \le g(\mbf{\hat a}) = \log \left( \frac{(k-1)^d}{k^{d-2}} \right)^{(d+1)/2} = f(\mbf{\hat a},\mbf{\hat b}),
\]
with equality iff $\mbf a = \mbf{\hat a}$.
Combining this with~\eqref{eq:maxfb2}, we obtain the desired bound
\begin{equation*}\label{eq:ga}
\max_{\mbf  a, \mbf  b\text{ s.t.~\eqref{eq:acons1} and~\eqref{eq:abcons1}}} f(\mbf a, \mbf b)
\le \max_{\mbf  a\text{ s.t.~\eqref{eq:acons1}}}  \max_{\mbf  b\text{ s.t.~\eqref{eq:brelaxedcons1}}} f(\mbf a, \mbf b) \le \max_{\mbf  a\text{ s.t.~\eqref{eq:acons1}}} g(\mbf a) = f(\mbf{\hat a}, \mbf{\hat b}).
\end{equation*}
Note that for $\mbf a \ne \mbf{\hat a}$, we have $g(\mbf a) < g(\mbf{\hat a})$ and thus $f(\mbf a, \mbf b) < f(\mbf{\hat a}, \mbf{\hat b})$. Furthermore, if $\mbf a = \mbf{\hat a}$ but $\mbf b \ne \mbf{\hat b}$ then, recalling that the maximum in~\eqref{eq:maxfb} is uniquely attained at $\mbf{b^*}=\mbf{\hat b}$, we conclude that $f(\mbf{\hat a}, \mbf b) < g(\mbf{\hat a})  = f(\mbf{\hat a}, \mbf{\hat b})$. This finishes the proof.
\end{proof}

We now proceed to prove the main result in this section.
\begin{proof}[Proof of Proposition~\ref{prop:EX}]
Recall from~\eqref{eq:EX} that
\[
\ex X = \sum_{\mbf{a}, \mbf{b}} p(\mbf a, \mbf b, n) e^{n f(\mbf a, \mbf b)},
\]
where we sum over all $\mbf a \in \frac{1}{n}\ent^{k|V|}$ and $\mbf b\in \frac{1}{n}\ent^{k(k-1)|E|}$ satisfying~\eqref{eq:acons1} and~\eqref{eq:abcons1}. In particular, a crude upper bound on the number of terms is given by
\[
O\left(n^{k|V|+k(k-1)|E|}\right),
\]
since each entry in $\mbf a$ or $\mbf b$ can take at most $n+1$ values.
Moreover, from~\eqref{eq:pab} and since $1\le \xi(x) = O(\sqrt x)$ as $x\to\infty$, we can bound the polynomial factor $p(\mbf a, \mbf b, n)$ in each term by
\begin{equation}
p(\mbf a, \mbf b, n) = O\left( n^{|V|+2k|E|} \right),
\label{eq:boundp}
\end{equation}
where the hidden constant in the big $O$ notation does not depend on $\mbf a$ or $\mbf b$. Hence, by Proposition~\ref{prop:lower_bound_optimization},
\[
\ex X \le e^{n f(\mbf{\hat a}, \mbf{\hat b})} \sum_{\mbf{a}, \mbf{b}} p(\mbf a, \mbf b, n)
=
O\left(n^{(d+1)(k+1)+\binom{d+1}{2}k(k+1)} \right)
\left( \frac{ (k-1)^d }{k^{d-2}}\right)^{(d+1)n/2},
\]
which completes the proof of the proposition.
\end{proof}

\subsection{Strongly equitable colourings. (Proof of Proposition~\ref{prop:EY}.)} \label{ssec:equi_expectation}

In this section we prove Proposition~\ref{prop:EY} regarding the expected number of
strongly equitable colourings of a random lift. Here we allow $G$ to be any fixed
$d$-regular graph, not necessarily $G=K_{d+1}$, and always assume that $n$ is
divisible by $k$. Let $Y$ be the number of strongly equitable $k$-colourings of a
random $n$-lift of $G$. 

Note that the only place in Section~\ref{ssec:opti1} where we used the fact that $G=K_{d+1}$ was in the proof of Lemma~\ref{lem:technical_bound}. Therefore, equation~\eqref{eq:EX} is still valid for a general $d$-regular graph $G$. In particular, restricting the sum to the terms in which $\mbf a=\mbf{\hat a}$ gives the expected number of strongly equitable colourings, that is
\begin{equation}
\ex Y = \sum_{\mbf{b}} p(\mbf{\hat a}, \mbf b, n) e^{n f(\mbf{\hat a}, \mbf b)},
\label{eq:EY}
\end{equation}
with
\begin{align}
p(\mbf{\hat a}, \mbf b, n) &= 
\left(\frac{\xi(n) }{  \xi(n/k)^k}\right)^{|V|} \left(\frac{\xi(n/k)^{2k} }{  \xi(n)}\right)^{|E|}
\prod_{e\in E}\proda{i,i'\\i\ne i'} \frac{1} {\xi(b_{e,i,i'} n)}
\label{eq:pabequi}
\\
f(\mbf a, \mbf b) &= 
|V| \log k - |E|\log k^2 - \suma{e\in E} \suma{i,i'\\i\ne i'}  b_{e,i,i'} \log b_{e,i,i'},
\label{eq:fabequi}
\end{align}
and where the sum is over all $\mbf b\in \frac{1}{n}\ent^{k(k-1)|E|}$ satisfying
\begin{align}
b_{e,i,i'} &\ge 0 \quad \forall e\in E, i,i'\in[k], i\ne i'
\notag
\\
\sum_{i'\ne i} b_{e,i,i'} &= 1/k \quad \forall e\in E, i\in [k]
\notag
\\
\sum_{i\ne i'} b_{e,i,i'} &= 1/k \quad \forall e\in E, i'\in [k].
\label{eq:bcons1equi}
\end{align}
Moreover, the discussion in the proof of Proposition~\ref{prop:lower_bound_optimization} leading to~\eqref{eq:maxfb} still holds for general $G$, and gives for $\mbf a=\mbf{\hat a}$
\[
\max_{\mbf b\text{ s.t.~\eqref{eq:brelaxedcons1}}} f(\mbf{\hat a}, \mbf b) =  h(\mbf{\hat a}) + \sum_{e\in E} \log z_e =  |V|\log k + |E| \log\left(\frac{k-1}{k}\right) = f(\mbf{\hat a}, \mbf{\hat b}),
\]
with one unique maximizer at $\mbf b = \mbf{\hat b}$. (Note that we did not use Lemma~\ref{lem:technical_bound} to bound $\sum_{e\in E} \log z_e$, since we are only concerned about the case $\mbf a = \mbf{\hat a}$.) Since $\mbf{\hat b}$ trivially satisfies~\eqref{eq:bcons1equi}, we obtain the following analogue of Proposition~\ref{prop:lower_bound_optimization} for general $d$-regular $G$ but restricted to strongly equitable colourings.
\begin{proposition}\label{prop:lower_bound_optimization_G_equitable}
Let $f(\mbf{\hat a}, \mbf b)$ be defined as in~\eqref{eq:fab}. Then the maximum of $f(\mbf{\hat a}, \mbf b)$ subject to~\eqref{eq:bcons1equi}
is uniquely attained at the point in which $\mbf b = \mbf{\hat b}$, and equals
\[
f(\mbf{\hat a}, \mbf{\hat b}) =  \log \left( k^{|V|} \left(\frac{k-1}{k}\right)^{|E|} \right).
\]
\end{proposition}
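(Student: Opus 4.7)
The plan is to observe that the statement is essentially a direct corollary of the analysis already carried out in the proof of Proposition~\ref{prop:lower_bound_optimization}, specialized to the case $\mbf a=\mbf{\hat a}$, and to check that the (stronger) equitable constraints~\eqref{eq:bcons1equi} are automatically satisfied at the maximizer. The main (and only real) observation is that restricting to a smaller feasible set does not move the maximum as long as the global maximizer already lies inside.

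First I would record that any $\mbf b$ satisfying~\eqref{eq:bcons1equi} also satisfies the relaxed row-sum constraints~\eqref{eq:brelaxedcons1} (since summing the two edge-marginal conditions in~\eqref{eq:bcons1equi} gives $\sum_{i\ne i'} b_{e,i,i'}=1$). Next I would invoke the Gibbs-inequality / Kullback--Leibler argument from the proof of Proposition~\ref{prop:lower_bound_optimization} with $\mbf a=\mbf{\hat a}$: for each edge $e=vv'\in E$, the term $\sum_{i\ne i'} b_{e,i,i'}\log(\hat a_{v,i}\hat a_{v',i'}/b_{e,i,i'})$ is maximized over probability distributions $b_e$ at the product distribution $b^*_{e,i,i'}=\hat a_{v,i}\hat a_{v',i'}/z_e$, with $z_e=\sum_{i\ne i'}\hat a_{v,i}\hat a_{v',i'}$, and this maximizer is unique. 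Plugging $\hat a_{v,i}=1/k$ yields $z_e=(k-1)/k$ and hence $b^*_{e,i,i'}=1/(k(k-1))$ for all $i\ne i'$, i.e.~$\mbf{b^*}=\mbf{\hat b}$.

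I would then verify trivially that $\mbf{\hat b}$ satisfies the equitable marginals in~\eqref{eq:bcons1equi}: for each fixed $i$, $\sum_{i'\ne i}\hat b_{e,i,i'}=(k-1)\cdot\frac{1}{k(k-1)}=\frac{1}{k}$, and similarly for the other marginal. Thus the unconstrained (over~\eqref{eq:brelaxedcons1}) maximum already lies in the tighter feasible set defined by~\eqref{eq:bcons1equi}, so it is also the maximum there, and it is unique.

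Finally, I would compute the value at the maximizer. From~\eqref{eq:fabequi}, one has $h(\mbf{\hat a})=|V|\log k$ and $\sum_{e\in E}\log z_e=|E|\log((k-1)/k)$, giving
\[
f(\mbf{\hat a},\mbf{\hat b})=|V|\log k+|E|\log\!\left(\frac{k-1}{k}\right)=\log\!\left(k^{|V|}\left(\frac{k-1}{k}\right)^{|E|}\right),
\]
as claimed. There is no serious obstacle here: all the work was already done in the proof of Proposition~\ref{prop:lower_bound_optimization}, and the only thing to check is the compatibility of $\mbf{\hat b}$ with the equitable constraints, which is immediate by symmetry.
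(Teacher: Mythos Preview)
Your proposal is correct and follows essentially the same approach as the paper: the paper's proof is exactly the observation that the Gibbs/KL argument from the proof of Proposition~\ref{prop:lower_bound_optimization} (up to~\eqref{eq:maxfb}) specializes at $\mbf a=\mbf{\hat a}$ to give the unique maximizer $\mbf{b^*}=\mbf{\hat b}$ under the relaxed constraints~\eqref{eq:brelaxedcons1}, and then notes that $\mbf{\hat b}$ trivially satisfies~\eqref{eq:bcons1equi}. Your write-up makes explicit a couple of verifications (that~\eqref{eq:bcons1equi} implies~\eqref{eq:brelaxedcons1}, and the marginal check for $\mbf{\hat b}$) that the paper leaves implicit, but the argument is the same.
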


We will estimate the sum in~\eqref{eq:EY} by using our version of Laplace's method, Proposition~\ref{prop:laplacian_summation_gamma}.

\begin{proof}[Proof of Proposition~\ref{prop:EY}]

We begin by defining a bipartite graph $\Gamma=\Gamma(V_\Gamma, E_\Gamma)$ so that we may express the equality constraints in~\eqref{eq:bcons1equi} in terms of its unsigned incidence matrix $D$ and use Proposition~\ref{prop:laplacian_summation_gamma}.
The idea is to associate each equation in~\eqref{eq:bcons1equi} to a vertex of $\Gamma$ and every variable to an edge in a way that preserves the incidence relations. To do this, we assign label $w_{e,1,i}$ to equation $\sum_{i'\ne i} b_{e,i,i'} = 1/k$ and label $w_{e,2,i'}$ to equation $\sum_{i\ne i'} b_{e,i,i'} = 1/k$.
The vertex set of $\Gamma$ is $V_\Gamma = V_{\Gamma,1} \cup  V_{\Gamma,2}$, where
\[
 V_{\Gamma,1} = \{ w_{e,1,i} : e\in E, i\in[k] \}
\and
 V_{\Gamma,2} = \{ w_{e,2,i'} : e\in E, i'\in[k] \}
\]
are the two sides of a bipartition.
The edge set is
\[
E_\Gamma = \{ b_{e,i,i'}: e\in E, i,i'\in [k], i\ne i'\},
\]
where each edge $b_{e,i,i'}$ has endpoints $w_{e,1,i}$ and $w_{e,2,i'}$ (i.e.~the labels of the two equations in which variable $b_{e,i,i'}$ appears).
Then the equality constraints in~\eqref{eq:bcons1equi} are equivalent to
\begin{equation}
D \mbf b = \mbf y,
\label{eq:Dby}
\end{equation}
where $D$ is the unsigned incidence matrix of $\Gamma$ and $\mbf y$ is the vector in $\real^{|V_\Gamma|}$ whose entries are all $1/k$.
The equations in~\eqref{eq:Dby} are consistent, since they admit the solution
\begin{align}
b_{e,i,i'} &= 0 \quad \forall e\in E, i,i'\in[k], i'\notin\{i, i+1\}
\notag\\
b_{e,i,i+1} &= \frac{1}{k} \quad \forall e\in E, i\in[k],
\label{eq:sol1}
\end{align}
where we use arithmetic modulo $k$ for indices $i,i'$.
We observe a few easy facts about $\Gamma$. First,
\[
|V_\Gamma| = 2k|E|
\and
|E_\Gamma| = k(k-1)|E|
\]
Also, $\Gamma$ has exactly $|E|$ connected components. More precisely, for each $e\in E$, the set of all vertices of the form $w_{e,1,i}$ or $w_{e,2,i'}$ induces a connected component of $\Gamma$. Each of these components is isomorphic to $F := K_{k,k}-M$, i.e.~the complete bipartite graph $K_{k,k}$ minus one perfect matching $M$. In particular, $\Gamma$ has at least one cycle (since $k\ge 3$). Since $\Gamma$ is bipartite, it is well-known (see~e.g.~Theorem~8.2.1 in~\cite{GR01}) that $D$ has rank $|V_\Gamma| - |E|$, and therefore $\mathbb V=\Ker(D)$ has dimension
\[
r = |E_\Gamma| - |V_\Gamma| + |E| = (k^2-3k+1) |E|. 
\]

Now we calculate $\tau(\Gamma)$. Since each maximal forest in $\Gamma$ is bijectively determined by selecting a spanning tree in each component, we conclude that the number of maximal forests in $\Gamma$ is
\[ \tau(\Gamma) = \tau(F)^{|E|}. \]
Naturally, next we count the number of spanning trees of $F$.

Let $I_k$ and $J_k$ denote the $k\times k$ identity matrix and the $k\times k$ matrix whose entries are all $1$s, respectively.
With an appropriate ordering of the vertices, the adjacency matrix of $F$ is
\[
A = \begin{bmatrix}0&1\\1&0\end{bmatrix} \otimes (J_k-I_k),
\]
and has eigenvalues
\[
\{ k-1, (1)_{k-1}, (-1)_{k-1}, 1-k \},
\]
where the subindices indicate multiplicities. Therefore, the Laplacian matrix
\[
Q = (k-1)I_{2k} - A
\]
of $F$ has eigenvalues
\[
\{ 0, (k-2)_{k-1}, (k)_{k-1}, 2k-2 \}.
\]
By Kirchhoff's Matrix Tree Theorem~\cite{BP98}, the number of spanning trees of $F$ is
\[
\tau(F) = \frac{1}{2k} (k-2)^{k-1} k^{k-1} (2k-2) =  (k-2)^{k-1} k^{k-2} (k-1)
\]
and thus
\begin{equation}
\tau(\Gamma) = \left( (k-1) k^{k-2} (k-2)^{k-1} \right)^{|E|}.
\label{eq:tau1}
\end{equation}

Let
\[
K = \{ \mbf b \in\real^{|E_\Gamma|} : 0 \le b_{e,i,i'} \le 1/k\} \]
and
\[
K_1 = \left\{ \mbf b \in\real^{|E_\Gamma|} : \tfrac{0.9}{k(k-1)} \le b_{e,i,i'} \le \tfrac{1.1}{k(k-1)}\right\}.
\]
Clearly, $K$ is a compact convex set with non-empty interior $K^\circ$, and any choice of \mbf b that satisfies \eqref{eq:bcons1equi} lies inside $K$. Let
\[
\phi(\mbf b) = f(\mbf{\hat a},\mbf b) = |V|\log k -  |E| \log k^2 - \sum_{e\in E} \suma{i,i'\\i\ne i'}  b_{e,i,i'} \log  b_{e,i,i'},
\]
which is continuous on $K$, and
\[
\psi(\mbf b) = 
\prod_{e\in E} \proda{i,i'\\i\ne i'} \frac{1} {\sqrt{ b_{e,i,i'} }},
\]
which is continuous and positive on $K_1$.
By Proposition~\ref{prop:lower_bound_optimization_G_equitable}, the maximum of $\phi(\mbf b)$ in $K$ subject to~\eqref{eq:Dby} is uniquely attained at $\mbf{\hat b} \in K_1 \subset K^\circ$. Moreover, $\phi(\mbf b)$ is twice continuously differentiable in the interior $K^\circ$, and its partial derivatives are
\[
\frac{\partial \phi}{\partial b_{e,i,i'}} = - (\log b_{e,i,i'} + 1)
\]
and
\[
\frac{\partial^2 \phi}{\partial b_{e,i,i'} \partial b_{e',j,j'}} =
\begin{cases}
- 1 / b_{e,i,i'} & \text{if $(e,i,i') = (e',j,j')$}
\\
0 & \text{otherwise}.
\end{cases}
\]
Hence, the Hessian matrix of $\phi$ at $\mbf b = \mbf{\hat b}$ is
\[
H = - k(k-1) I_{|E_\Gamma|}.
\]
Then, for any $|E_\Gamma| \times r$ matrix $U$ whose columns are a basis of $\mathbb V$,
\[
\det(-H|_{\mathbb V}) = \frac{ \det (- U^T H U)}{\det U^TU} = \frac{ \det ( k(k-1) U^T U)}{\det U^TU} = \big( k(k-1) \big)^r \ne 0.
\]
Let
\[
\mathbb X_n = \left\{ \mbf b\in K \cap \frac{1}{n} \ent^{|E_\Gamma|} : D\mbf b = \mbf y \right\}.
\]
The solution described in~\eqref{eq:sol1} belongs to $K$ and, since $k\mid n$, also to $\frac{1}{n} \ent^{|E_\Gamma|}$, so $\mathbb X_n$ is not empty. For each $\mbf b\in \mathbb X_n$, let
\[
T_n(\mbf v) = p(\mbf{\hat a}, \mbf b, n) e^{n f(\mbf{\hat a}, \mbf b)}
\]
and
\[
c_n = k^{k|V|/2 - k|E|}  (2\pi n)^{-(k-1)|V|/2 - r/2}.
\]
First, in view of~\eqref{eq:boundp}, for $\mbf b\in \mathbb X_n$,
\[
p(\mbf{\hat a}, \mbf b, n) / c_n = O(n^{|V|+2k|E|} /c_n) = e^{o(n)}
\]
and combined with Proposition~\ref{prop:lower_bound_optimization_G_equitable} this gives
\[ T_n(\mbf x) = c_n (p(\mbf{\hat a}, \mbf b, n) / c_n)e^{n f(\mbf{\hat a}, \mbf b)} = O(c_ne^{n\phi(\mbf{\hat b})+o(n)}). \]
Now note that we have chosen $c_n$ such that due to \eqref{eq:pabequi} and the fact that $\xi(x)\sim\sqrt{2\pi x}$ as $x\to\infty$, and after a few computations, we have that for $\mbf b\in \mathbb X_n \cap K_1$,
\[
p(\mbf{\hat a}, \mbf b, n) = c_n (\psi(\mbf b)+o(1)).
\]

Finally, as we've met all of the conditions of Proposition \ref{prop:laplacian_summation_gamma},
\begin{align*}
\ex Y &= \sum_{\mbf b \in \mathbb X_n} T_n(\mbf b)
\\
&\sim \frac{\psi(\mbf{\hat b})}{\tau(\Gamma)^{1/2} \det(-H|_{\mathbb V})^{1/2}} (2\pi n)^{r/2} c_n e^{n\phi(\mbf{\hat b})}
\\
&= \frac{ \left( k(k-1) \right)^{k(k-1)|E|/2}  k^{k|V|/2 - k|E|}}{\left( (k-1) k^{k-2} (k-2)^{k-1} \right)^{|E|/2} \big( k(k-1) \big)^{r/2}}  (2\pi n)^{-(k-1)|V|/2} \left( k^{|V|} \left(\frac{k-1}{k}\right)^{|E|} \right)^n
\\
&=  k^{k|V|/2} \left( \frac{  (k-1)^2 }{ k(k-2)  } \right)^{(k-1)|E|/2}   (2\pi n)^{-(k-1)|V|/2} \left( k^{|V|} \left(\frac{k-1}{k}\right)^{|E|} \right)^n.
\end{align*}
This completes the proof of Proposition~\ref{prop:EY}.
\end{proof}

%%%
% Second Moment
%%%
\section{Second moment ingredients}
\label{sec:second}

As in the previous section, let $Y$ denote the number of strongly equitable
$k$-colourings of a random $n$-lift of $G$. In this section we continue to 
assume that $G$ is some fixed $d$-regular graph, not necessarily $K_{d+1}$, and that $k$ divides $n$. Our goal in this section is to prove Proposition~\ref{prop:expectation_y_squared}.

The proof is similar in nature to the proof of Proposition~\ref{prop:EY}. However,
in that proof we fixed $\mbf a = \mbf{\hat a}$ approximated the single summation over the $\mbf b$'s. In this proof, we will not be able to avoid a double summation.
We therefore require a more intricate argument, arranged as follows: In
Section~\ref{ssec:outline}, we give a counting argument for $\ex Y^2$ similar to
the argument in Section~\ref{ssec:opti1}. We next optimize the exponential contribution in Section \ref{ssec:optimization}. Then we approximate the inner sum in Section~\ref{ssec:inner} before completing the proof by approximating the outer sum in Section~\ref{ssec:outer}.

\subsection{Counting Argument} \label{ssec:outline}

In order to calculate $\ex Y^2$, we count pairs of balanced colourings. To each $v\in V$
we assign a $k\times k$ matrix $A_v = (a_{v,i,j})_{i=1}^k{}_{j=1}^k$ where $a_{v,i,j}$
is the proportion of the vertices in $\Pi^{-1}(v)$ that receive colour $(i,j)$ (that is, colour $i$ in the first
colouring and colour $j$ in the second colouring).
Each matrix $A_v$ must satisfy
\begin{align}
a_{v,i,j} &\ge 0, \quad \forall v\in V, i,j\in[k]
\notag\\
\sum_{j} a_{v,i,j} &= 1/k, \quad \forall v\in V, i\in[k]
\notag\\
\sum_{i} a_{v,i,j} &= 1/k, \quad \forall v\in V, j\in[k].
\label{eq:acons2}
\end{align}
In particular, $kA_v$ is a doubly-stochastic matrix.

For each $e=(v,v')\in E$, let $b_{e,i,j,i',j'}$ denote the proportion of edges in $\Pi^{-1}(e)$ which join a vertex of $\Pi^{-1}(v)$ with colour $(i,j)$ to a vertex of $\Pi^{-1}(v')$ with colour $(i',j')$.
Here $i,j,i',j'\in[k]$, but we require $i$ to be distinct from $i'$ and $j$ to be distinct from $j'$ to assure the colourings are proper.
Hence, to each $e=(v,v')\in E$, we assign a four-dimensional array $B_e = (b_{e,i,j,i',j'})_K$, where
\[
K = \{(i,j,i',j')\in [k]^4 : i\ne i', j\ne j'\}.
\]
Furthermore, each $B_e$ must satisfy:
\begin{align}
b_{e,i,j,i',j'}&\ge 0, \quad \forall e=(v,v')\in E, (i,j,i',j')\in K
\notag\\
\sum_{i'\ne i,j'\ne j}b_{e,i,j,i',j'}&=a_{v,i,j}, \quad \forall e=(v,v')\in E, i,j\in[k]
\notag\\
\sum_{i\ne i',j\ne j'}b_{e,i,j,i',j'}&=a_{v',i',j'}, \quad \forall e=(v,v')\in E, i',j'\in[k].
\label{eq:abcons2}
\end{align}
We will write $\mbf A = (A_v)_{v\in V}$ and $\mbf B = (B_e)_{e\in E}$ for short. Note that in addition to~\eqref{eq:acons2} and~\eqref{eq:abcons2}, each entry of $A_v$ and $B_e$ must be in $\frac{1}{n}\ent$.

In the following calculation, we obtain $\ex Y^2$  by summing, for every valid choice of $\mbf A$ and $\mbf B$,
the number of triples (lift, colouring 1, colouring 2) compatible with such $\mbf A$ and $\mbf B$ divided by the total number of lifts.
\begin{align}
\ex Y^2 &= \frac{1}{n!^{|E|}}
\sum_{\mbf A, \mbf B} \prod_{v\in V}\binom{n}{n A_v}
\prod_{e\in E}\frac{(n A_v)!(nA_{v'})!}{(nB_e)!} \notag
\\
&= \frac{1}{n!^{|E|}} \sum_{\mbf A} \prod_{v\in V}\binom{n}{n A_v}
\prod_{e\in E}(n A_v)!(nA_{v'})! \sum_{\mbf B}\prod_{e\in E}\frac{1}{(nB_e)!} \label{eq:ABsplit}\\
&= \sum_{\mbf A,\mbf B} \mathrm{poly}(n) \prod_{v\in V}\frac{1}{\prod_{i,j}{a_{v,i,j}}^{na_{v,i,j}}}
\prod_{e\in E}\prod_{(i,j,i',j')\in K} \left( \frac{a_{v,i,j}a_{v',i',j'}}{b_{e,i,j,i',j'}}\right)^{nb_{e,i,j,i',j'}}
 \notag \\
&= \sum_{\mbf A,\mbf B} \mathrm{poly}(n)
\exp\big( n f(\mbf A, \mbf B)\big) \label{eq:imprecise_formula}
\end{align}
where $\mathrm{poly}(n)$ is some function that is polynomial in $n$ and
\begin{equation}
f(\mbf A, \mbf B) = -\sum_{v\in V}\sum_{i,j}a_{v,i,j}\log a_{v,i,j}
+\sum_{e\in E}\sum_{i,j,i',j'} b_{e,i,j,i',j'}\log\left( \frac{a_{v,i,j}a_{v',i',j'}}{b_{e,i,j,i',j'}}\right) \label{eq:fAB}
\end{equation}
is the function we optimize in the next section.

\subsection{Optimization} \label{ssec:optimization}

We will show that the exponential part in $\ex Y^2$ is maximized by the term in which each $a_{v,i,j}=1/k^2$ and each $b_{e,i,j,i',j'}= 1/k^2(k-1)^2$.
We introduce some notation.
Let $\hat A = (\hat a_{i,j})_{i,j\in[k]}$ with all $\hat a_{i,j}=1/k^2$, and $\hat B = (\hat b_{i,j,i',j'})_{(i,j,i',j')\in K}$ with all $\hat b_{i,j,i',j'}=1/k^2(k-1)^2$.
We also write $\mbf{\hat A} = (\hat A,\ldots,\hat A)$ and $\mbf{\hat B} = (\hat B,\ldots,\hat B)$. As in Section \ref{ssec:opti1}, we note $\mbf{\hat A}$ and
$\mbf{\hat B}$ are only valid assignments to $\mbf{A}$ and $\mbf{B}$,
respectively, if $k^2(k-1)^2$ divides $n$, but as we merely seek an upper bound
for $f$ we may do so on a larger space that does include $\mbf{\hat A}$ and
$\mbf{\hat B}$.

\begin{proposition}\label{prop:exp_bound}
Suppose $d < \ell_k$. Let $f(\mbf A, \mbf B)$ be defined as in~\eqref{eq:fAB}. Then the maximum of $f(\mbf A, \mbf B)$ subject to~\eqref{eq:acons2} and~\eqref{eq:abcons2}
is uniquely attained at the point in which $\mbf A = \mbf{\hat A}$ and $\mbf B = \mbf{\hat B}$, and equals
\[
f(\mbf{\hat A}, \mbf{\hat  B}) = \log\left(k^{|V|}\left(\frac{k-1}{k}\right)^{|E|}\right)^{2n}.
\]
\end{proposition}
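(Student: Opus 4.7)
\textbf{Proof plan for Proposition~\ref{prop:exp_bound}.}
The strategy mirrors the first-moment argument in Section~\ref{ssec:opti1}, but replaces the rectangular Achlioptas--Naor inequality (Proposition~\ref{prop:achlioptas-naor_corollary}) by the original square version (Theorem~\ref{thm:achlioptas-naor}).
First I would fix $\mbf A$ satisfying~\eqref{eq:acons2} and relax the constraints~\eqref{eq:abcons2} on $\mbf B$ to the weaker condition that each $B_e$ is a probability distribution on $K$. (This is permissible for an upper bound, since summing~\eqref{eq:abcons2} over all $(i,j)$ gives $\sum_{(i,j,i',j')\in K} b_{e,i,j,i',j'}=1$.) Under this relaxation, each edge term of $f(\mbf A,\mbf B)$ decouples, and Gibbs' inequality shows that the maximum in $B_e$ is uniquely attained at $b^*_{e,i,j,i',j'}=a_{v,i,j}a_{v',i',j'}/z_e$, with value $\log z_e$, where
\[
z_e = \suma{(i,j,i',j')\in K} a_{v,i,j}a_{v',i',j'}.
\]
Hence $\max_{\mbf B\text{ relaxed}} f(\mbf A,\mbf B) = g(\mbf A) := h(\mbf A)+\sum_{e\in E}\log z_e$, where $h(\mbf A)=-\sum_v\sum_{i,j}a_{v,i,j}\log a_{v,i,j}$.

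Next I would simplify $z_e$. A direct inclusion-exclusion using the row-/column-sum conditions in~\eqref{eq:acons2} yields $z_e = \frac{k-2}{k}+\rho(v,v')$, where $\rho(v,v')=\sum_{i,j}a_{v,i,j}a_{v',i,j}$. (Note $\rho(v,v)=\sum_{i,j}a_{v,i,j}^2$.) The key observation is that combining Cauchy--Schwarz ($\rho(v,v')\le\sqrt{\rho(v,v)\rho(v',v')}$) with AM--GM gives
\[
z_e = \tfrac{k-2}{k}+\rho(v,v') \le \sqrt{z_{vv}\,z_{v'v'}},
\qquad\text{where}\quad z_{vv}:=\tfrac{k-2}{k}+\rho(v,v),
\]
as one checks by squaring and cancelling (the inequality reduces to $2\sqrt{\rho(v,v)\rho(v',v')}\le \rho(v,v)+\rho(v',v')$). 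Taking logs and summing over edges, the $d$-regularity of $G$ gives
\[
\sum_{e\in E}\log z_e \le \tfrac{d}{2}\sum_{v\in V}\log z_{vv}.
\]

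Now I would invoke Theorem~\ref{thm:achlioptas-naor} separately on each vertex. For each $v$, the matrix $M_v := kA_v$ is $k\times k$ row-stochastic, with $\rho(M_v)=k^2\rho(v,v)$ and $h(M_v) = kH(A_v)-k\log k$, where $H(A_v)=-\sum_{i,j}a_{v,i,j}\log a_{v,i,j}$. A short manipulation rewrites the Achlioptas--Naor bound (with any $c<c_k$) as
\[
H(A_v)+c\log z_{vv} \le 2\log k + c\log\hat z,
\qquad \hat z=\bigl(\tfrac{k-1}{k}\bigr)^2,
\]
with equality iff $A_v=\hat A$. Summing over $v$ and choosing $c=d/2$ -- which is valid because the hypothesis $d<\ell_k = 2c_k$ guarantees $d/2<c_k$ -- gives
\[
h(\mbf A) + \tfrac{d}{2}\sum_{v\in V}\log z_{vv} \le 2|V|\log k + |E|\log\hat z,
\]
using $|E|=d|V|/2$. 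Chaining with the previous bound yields $g(\mbf A)\le 2|V|\log k + 2|E|\log((k-1)/k) = f(\mbf{\hat A},\mbf{\hat B})$, as required.

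For uniqueness, equality in Achlioptas--Naor forces $A_v=\hat A$ for every $v$, at which point the Cauchy--Schwarz and AM--GM steps hold with equality trivially, and the Gibbs step forces $B_e = b^*_e$; at $\mbf A=\mbf{\hat A}$ one checks $b^*_e=\hat B$, which happens to satisfy the full (unrelaxed) constraints~\eqref{eq:abcons2}. Thus the maximum over the constrained problem is attained, and uniquely at $(\mbf{\hat A},\mbf{\hat B})$. The main (minor) obstacle is keeping track of the algebraic manipulations needed to rephrase the Achlioptas--Naor inequality in terms of $z_{vv}$ and to verify the crucial bound $z_e\le\sqrt{z_{vv}z_{v'v'}}$; the deeper observation is that it is precisely the choice $c=d/2$, legal under $d<\ell_k$, that makes the entropy coefficient vanish and closes the argument.
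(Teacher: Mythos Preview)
Your proposal is correct and follows essentially the same route as the paper's own proof: relax~\eqref{eq:abcons2} and use Gibbs' inequality to reduce to $g(\mbf A)=h(\mbf A)+\sum_e\log z_e$; expand $z_e$ by inclusion--exclusion; bound $z_e\le\sqrt{z_{vv}z_{v'v'}}$ (the paper phrases this as a single Cauchy--Schwarz step on the augmented vectors $(\sqrt{(k-2)/k},\,A_v)$, which is equivalent to your Cauchy--Schwarz\,+\,AM--GM decomposition); use $d$-regularity to decouple vertices; and then apply Theorem~\ref{thm:achlioptas-naor} to each $kA_v$ with $c=d/2<c_k$. The uniqueness argument is also the same.
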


\begin{proof}
This is easier than for $\ex X$.

Fix any $\mbf A$ satisfying~\eqref{eq:acons2}, and let
\[
g(\mbf A) = \sum_{v \in V} \left( h(A_v) + \frac{d}{2}\log(1 - 2/k+\rho(A_v))\right),
\]
where
\[
h(A_v) = - \sum_{i,j\in[k]} a_{v,i,j} \log a_{v,i,j}
\qquad\text{and}\qquad
\rho(A_v) = \sum_{i,j\in[k]} {a_{v,i,j}}^2.
\]
We will maximize $f(\mbf A, \mbf B)$ for such fixed $\mbf A$ and with $\mbf B$ subject only to
\begin{equation}
b_{e,i,j,i',j'}\ge0, \qquad \sum_{i,j,i',j'}b_{e,i,j,i',j'}=1.
\label{eq:brelaxedcons}
\end{equation}
In view of this relaxation, we can regard each $B_e=(b_{e,i,j,i',j'})_{(i,j,i',j')\in K}$ as an arbitrary probability distribution, and maximize each term
$\sum_{i,j,i',j'} b_{e,i,j,i',j'}\log\left( \frac{a_{v,i,j}a_{v',i',j'}}{b_{e,i,j,i',j'}}\right)$ in~\eqref{eq:fAB} separately.
For each $e\in E$, we define another probability distribution given by
\[
b^*_{e,i,j,i',j'} =  \frac{a_{v,i,j}a_{v',i',j'}}{z_e}, \qquad\text{for}\quad (i,j,i',j')\in K,
\]
where
\[
z_e = \sum_{(i,j,i',j')\in K} a_{v,i,j}a_{v',i',j'}
\]
is the normalizing factor, and we write $B^*_e=(b^*_{e,i,j,i',j'})_{(i,j,i',j')\in K}$.
Then
\[
\sum_{i,j,i',j'} b_{e,i,j,i',j'}\log\left( \frac{a_{v,i,j}a_{v',i',j'}}{b_{e,i,j,i',j'}}\right) = \log z_e - D_{KL}(B_e \| B^*_e),
\]
where $D_{KL}(B_e \| B^*_e) = \sum_{i,j,i',j'} b_{e,i,j,i',j'}\log\left( \frac{b_{e,i,j,i',j'}}{b^*_{e,i,j,i',j'}}\right)$ is the Kullback-Leibler divergence from $B_e$ to $B^*_e$.
By Gibb's inequality, $D_{KL}(B_e \| B^*_e)\ge0$ with equality if and only if $B_e = B^*_e$. As a result,
\begin{equation}\label{eq:maxfB}
\max_{\mbf B\text{ s.t.~\eqref{eq:brelaxedcons}}} f(\mbf A, \mbf B) = \sum_{v\in V} h(A_v) + \sum_{e\in E} \log z_e,
\end{equation}
with one unique maximizer at $\mbf B = \mbf{B^*} := (B^*_e)_{e\in E}$. Note that if $\mbf A=\mbf{\hat A}$ then $\mbf{B^*}=\mbf{\hat B}$.
We proceed to bound $\log z_e$. Using inclusion-exclusion, the fact that $\mbf
A$ satisfies~\eqref{eq:acons2}, and the Cauchy-Schwarz inequality, we obtain
\begin{align*}
\log z_e &= \log \sum_{i,j\in[k]} a_{v,i,j} \left( \sum_{i',j'\in[k]} a_{v',i',j'} - \sum_{j'\in[k]} a_{v',i,j'} - \sum_{i'\in[k]} a_{v',i',j} + a_{v',i,j} \right)
\\
&= \log \sum_{i,j\in[k]} a_{v,i,j} \left( 1 - 2/k + a_{v',i,j} \right)
\\
&= \log \left( 1 - 2/k + \sum_{i,j\in[k]} a_{v,i,j}a_{v',i,j} \right)
\\
&\le  \log \left( \sqrt{ 1 - 2/k + \sum_{i,j\in[k]} a_{v,i,j}^2 } \sqrt{ 1 - 2/k + \sum_{i,j\in[k]} a_{v',i,j}^2 } \right)
\\
&=  \frac12 \log (1 - 2/k + \rho(A_v) ) + \frac12 \log (1 - 2/k + \rho(A_{v'}) ).
\end{align*}
Since each $v\in V$ has degree $d$,
\begin{equation*}\label{eq:sumlogze}
\sum_{e\in E} \log z_e \le \frac{d}2 \log (1 - 2/k + \rho(A_v) ),
\end{equation*}
and combining this with~\eqref{eq:maxfB} yields
\begin{equation}\label{eq:maxfB2}
\max_{\mbf  B\text{ s.t.~\eqref{eq:brelaxedcons}}} f(\mbf A, \mbf B) \le g(\mbf A).
\end{equation}
We will bound $g(\mbf A)$ by applying Theorem~\ref{thm:achlioptas-naor} to matrix $kA_v$ for each $v\in V$, which is a doubly-stochastic matrix.
Note that $d<\ell_k$ implies $d/2 < c_k$, where $c_k$ is defined in~\eqref{eq:cq}.
Therefore, Theorem~\ref{thm:achlioptas-naor} (with $q=k$, $c=d/2$ and $A=kA_v$) yields
\[
\frac{d}{2}\log\left( \frac{(k-1)^2 + \rho(k A_v) - 1}{(k-1)^2} \right) \le \log k - \frac{1}{k}h(k A_v),
\]
with equality if and only if $A_v = \hat A$. Noting that $\rho(kA_v) = k^2\rho(A_v)$ and $\frac{1}{k}h(kA_v) = h(A_v) - \log k$, we obtain
\[
h(A_v) + \frac{d}{2}\log\left( \frac{(k-1)^2 + k^2 \rho(A_v) - 1}{k^2} \right) \le 2\log k + \frac{d}{2}\log\left( \frac{(k-1)^2}{k^2} \right).
\]

Thus, after summing over $v\in V$ and simplifying, we get
\begin{equation*}\label{eq:maxfAB}
g(\mbf A) \le g(\mbf{\hat A}) = \log\left(k^{|V|}\left(\frac{k-1}{k}\right)^{|E|}\right)^{2n} = f(\mbf{\hat A}, \mbf{\hat B}),
\end{equation*}
with equality if and only if $\mbf A=\mbf{\hat A}$. Combining this with~\eqref{eq:maxfB2}, we obtain the desired bound
\begin{align*}\label{eq:gA}
\max_{\mbf  A, \mbf  B\text{ s.t.~\eqref{eq:acons2} and~\eqref{eq:abcons2}}} f(\mbf A, \mbf B)
&\le \max_{\mbf  A\text{ s.t.~\eqref{eq:acons2}}}  \max_{\mbf  B\text{ s.t.~\eqref{eq:brelaxedcons}}} f(\mbf A, \mbf B)\\
	&\le \max_{\mbf  A\text{ s.t.~\eqref{eq:acons2}}} g(\mbf A) = f(\mbf{\hat A}, \mbf{\hat B}).
\end{align*}

Note that for $\mbf A \ne \mbf{\hat A}$, we have $g(\mbf A) < g(\mbf{\hat A})$ and thus $f(\mbf A, \mbf B) < f(\mbf{\hat A}, \mbf{\hat B})$. Furthermore, if $\mbf A = \mbf{\hat A}$ but $\mbf B \ne \mbf{\hat B}$ then, recalling that the maximum in~\eqref{eq:maxfB} is uniquely attained at $\mbf{B^*}=\mbf{\hat B}$, we conclude that $f(\mbf{\hat A}, \mbf B) < g(\mbf{\hat A})  = f(\mbf{\hat A}, \mbf{\hat B})$. This finishes the proof.
\end{proof}

\subsection{Inner Sum} \label{ssec:inner}

In order to accurately approximate $\ex Y^2$, we now return to \eqref{eq:ABsplit} and find asymptotics for the inner sum over the ${\bf B}_e$'s. In particular, we prove the following proposition:

\begin{proposition} \label{prop:inner_sum}
Define $S$ to be the set of $\mbf A$ satisfying \eqref{eq:acons2} and $||\mbf{\hat{A}}-\mbf A||_{\infty} < \frac{\log n}{\sqrt{n}}$ and fix $\mbf A \in S$. Then
\begin{align*}
\mathcal{I}(\mbf A) &:= \sum_{\mbf B} \prod_{\substack{e \in E\\ (i,j,i',j')\in K}} \frac{1}{(b_{e,i,j,i',j'} n)!}\\
	&\sim (\nicefrac{n}{e})^{-n|E|} \gamma(n, k)^{\frac{1}{2}|E|}  \exp
\left( \frac{- nk^2(k-1)^2}{2}\left(\sum_{vv' \in E} \left( \frac{1}{2\lambda} \sum_{i,j}\left (a_{v,i,j} + a_{v',i,j} - \frac{2}{k^2}\right)^2 \quad + \right. \right. \right. \\
	& \qquad \qquad + \quad \left. \left. \left. \frac{1}{2\lambda'} \sum_{i,j} (a_{v,i,j} - a_{v',i,j})^2 + \frac{2}{k^2(k-1)^2}\log \frac{1}{k^2(k-1)^2}\right) \right) \right )
\end{align*}
where
\begin{equation}
\gamma(n, k) := \frac{k^{(3k^2+1)}(k-1)^{4k(k-1)}}{(2\pi n)^{(2k^2-1)}((k-1)^2+1)^{(2k^2-1)}(k-2)^{(k^2-1)}} \label{eq:smallc}
\end{equation}
and
$\lambda$ and $\lambda'$ are as defined in \eqref{eq:lambdas}.
\end{proposition}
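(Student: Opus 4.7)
The plan is to evaluate $\mathcal{I}(\mbf A)$ by noting that, for fixed $\mbf A$, the constraints~\eqref{eq:abcons2} decouple across edges, so
\begin{equation*}
\mathcal{I}(\mbf A) = \prod_{e=vv'\in E} \mathcal{I}_e(A_v, A_{v'}),
\end{equation*}
where $\mathcal{I}_e$ is the sum of $\prod_{(i,j,i',j')\in K} 1/(b_{e,i,j,i',j'}n)!$ over $B_e \in \frac{1}{n}\ent^K$ with marginals $A_v$ and $A_{v'}$. I would apply Stirling's formula to each factorial and invoke Proposition~\ref{prop:laplacian_summation_gamma} on each factor, with exponent $\phi_e(B_e) = -\sum_K b_{e,i,j,i',j'}\log b_{e,i,j,i',j'}$ and polynomial factor $\psi_e(B_e) = \prod_K b_{e,i,j,i',j'}^{-1/2}$. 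The associated constraint graph $\Gamma_e$ is bipartite with parts indexed by two copies of $[k]^2$ and an edge $b_{e,i,j,i',j'}$ joining $(i,j)$ to $(i',j')$ for each $(i,j,i',j')\in K$. Its bipartite adjacency matrix is $M=(J_k-I_k)\otimes(J_k-I_k)$, so by bipartiteness the Laplacian spectrum of $\Gamma_e$ equals the spectrum of $DD^T$ and consists of $0$ and $2(k-1)^2$ (each simple), $(k-1)k$ and $(k-1)(k-2)$ (each with multiplicity $2(k-1)$), and $\lambda$ and $\lambda'$ (each with multiplicity $(k-1)^2$). This confirms $\Gamma_e$ is connected, yields $\tau(\Gamma_e)$ via Kirchhoff's theorem, and gives the kernel dimension $r=k^2(k-1)^2-2k^2+1$.

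Next, I would find the constrained maximizer of $\phi_e$ by Lagrange multipliers, obtaining $b^*_{e,i,j,i',j'}=\alpha_{v,i,j}\beta_{v',i',j'}$, which reduces to $\hat B$ when $A_v=A_{v'}=\hat A$. For $\mbf A \in S$, write $B_e^*=\hat B+\delta B$, where the first-order perturbation $\delta B$ is the minimum-$\ell^2$ solution of $D\,\delta B=(\delta_v,\delta_{v'})$ with $\delta_w:=A_w-\hat A$. The linear term in $\phi_e(B_e^*)$ vanishes because $\sum_K \delta b=0$, while the quadratic term equals
\begin{equation*}
-\frac{k^2(k-1)^2}{2}\,(\delta_v,\delta_{v'})^{T}(DD^T)^{\dagger}(\delta_v,\delta_{v'}),
\end{equation*}
since the Hessian of $\phi_e$ at $\hat B$ is $-k^2(k-1)^2\,I$. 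Because $kA_v$ is doubly-stochastic, each $\delta_w$ has row and column sums zero, hence is orthogonal to every $\mathbf{1}\otimes v$ and $u\otimes\mathbf{1}$ eigenvector of $DD^T$; only the $\lambda$- and $\lambda'$-eigenspaces, spanned by symmetric pairs $(u,u)$ and antisymmetric pairs $(u,-u)$ of doubly-centered $u$, contribute. A spectral decomposition then yields
\begin{equation*}
(\delta_v,\delta_{v'})^{T}(DD^T)^{\dagger}(\delta_v,\delta_{v'}) = \frac{\|\delta_v+\delta_{v'}\|^2}{2\lambda} + \frac{\|\delta_v-\delta_{v'}\|^2}{2\lambda'},
\end{equation*}
which matches the stated quadratic form after substituting $\delta_{w,i,j}=a_{w,i,j}-1/k^2$.

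Finally, the polynomial prefactor in Proposition~\ref{prop:laplacian_summation_gamma}, combined with the Stirling prefactor, simplifies to $\gamma(n,k)^{1/2}$ per edge after substituting $\det(-H|_{\mathbb V})=(k^2(k-1)^2)^r$, the explicit Laplacian spectrum of $\Gamma_e$, and $\psi_e(\hat B)=(k(k-1))^{k^2(k-1)^2}$; the $(n/e)^{-n|E|}$ factor comes from the $n^{-n}$ and $e^{n}$ residuals in Stirling, and the constant $\frac{2}{k^2(k-1)^2}\log\frac{1}{k^2(k-1)^2}$ inside the exponential is the per-edge contribution $\phi_e(\hat B)$. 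The main obstacle will be the spectral step: verifying that the doubly-centered constraint on the $\delta_w$'s restricts the quadratic form to exactly the $\lambda$- and $\lambda'$-eigenspaces, and ensuring that the $o(1)$ error terms in Proposition~\ref{prop:laplacian_summation_gamma} are uniform in $\mbf A \in S$ (so that the asymptotic holds as $\mbf A$ varies over $S$), together with the careful bookkeeping needed to make $\gamma(n,k)$ emerge in the exact form of~\eqref{eq:smallc}.
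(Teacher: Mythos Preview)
Your proposal is correct in outline and takes a genuinely different route from the paper. The paper proves Proposition~\ref{prop:inner_sum} via the saddle-point method: it writes $\mathcal{I}(\mbf A)$ as a coefficient of a generating function, extracts it by a $2k^2|E|$-fold contour integral, chooses radii $\rho=\sqrt{n/(k^2(k-1)^2)}$, and after a Taylor expansion reduces everything to Gaussian integrals governed by the quadratic form $\mbf\theta^T B\,\mbf\theta$ with $B=(k-1)^2I_{2k^2}+\left[\begin{smallmatrix}0&1\\1&0\end{smallmatrix}\right]\otimes(J_k-I_k)^{\otimes 2}$. You instead propose to apply the Laplace summation (Proposition~\ref{prop:laplacian_summation_gamma}) directly to each edge factor $\mathcal{I}_e$. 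The two approaches meet at exactly the same spectral object: your $DD^T$ for the constraint graph $\Gamma_e$ is precisely the paper's matrix $B$, and your observation that the doubly-centered perturbations $\delta_w$ lie in the eigenspace of $(J_k-I_k)^{\otimes 2}$ with eigenvalue~$1$ is the combinatorial content of the paper's reduction to the $\lambda$- and $\lambda'$-eigenspaces (their $\langle\mbf w_{i,j},\mbf\alpha\rangle=0$ for $i=k$ or $j=k$). Your perturbation computation $\phi_e(B_e^*)=\phi_e(\hat B)-\tfrac{k^2(k-1)^2}{2}\,\delta y^T(DD^T)^\dagger\delta y+O(\|\delta y\|^3)$ is correct, as is the identification $\tau(\Gamma_e)=(\det\mathcal L)^2$ that makes the prefactors agree.

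What the paper's route buys is that the $a_{v,i,j}$ enter the contour integrand \emph{linearly} in the exponent, so the dependence on $\mbf A$ falls out of the Gaussian integrals without a separate perturbation argument and with the error control in $\mbf A\in S$ built in. Your route is cleaner structurally but, as you flag, requires a uniform-in-$\mbf A$ version of Proposition~\ref{prop:laplacian_summation_gamma}: the constraint vector $\mbf y=(A_v,A_{v'})$ and hence the maximizer $B_e^*$ vary with $n$, so one must go back into the Laplace-summation proof and check that the $o(1)$ errors are uniform over a shrinking neighbourhood of $\hat{\mbf y}$. This is routine but not free; it is the one genuine extra step your approach incurs compared with the saddle-point method.
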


\begin{proof}
We prove the proposition using the saddle point method. We start by expressing the
sum as the coefficient of a generating function.

Consider the generating function on variables $\{x_{e,i,j}\} \cup \{x'_{e,i',j'}\}$ for each $e \in E$ and $(i,j,i',j') \in K$:
\[ \prod_{e \in
E} \prod_{(i,j,i',j')\in K} \sum_{t \in \mathbb{N}_0}
\frac{(x_{e,i,j}x'_{e,i',j'})^t}{t!} \]
We want to extract the coefficient when $t = b_{e,i,j,i',j'}n$. Recalling \eqref{eq:abcons2}, we have
\begin{align*}
\sum_{\mbf B} \prod_{\substack{e \in E\\ (i,j,i',j')\in K}} \frac{1}{(b_{e,i,j,i',j'} n)!}
&=
\left[\prod_{e=(v,v')} \left( \prod_{i,j\in[k]} {x_{e,i,j}}^{a_{v,i,j}n}
\prod_{i',j'\in[k]} {x'_{e,i',j'}}^{a_{v',i',j'}n} \right)\right] \quad \times
\\
&
\qquad\qquad \times \quad \prod_{e \in
E} \prod_{(i,j,i',j')\in K} \sum_{t \in \mathbb{N}_0}
\frac{(x_{e,i,j}x'_{e,i',j'})^t}{(\underbrace{b_{e,i,j,i',j'}n}_t)!}
\\
&= \left[\prod_{e=(v,v')} \left( \prod_{i,j\in[k]} {x_{e,i,j}}^{a_{v,i,j}n}
\prod_{i',j'\in[k]} {x'_{e,i',j'}}^{a_{v',i',j'}n} \right)\right] \quad \times
\\&
\qquad \qquad \times \quad \exp\left(  \sum_{e \in E}\sum_{(i,j,i',j')\in K}
x_{e,i,j} x'_{e,i',j'} \right)
\end{align*}
We extract this coefficient using the residue theorem:
\[ \frac{1}{(2\pi i)^{|E|2k^2}} \int \frac{\exp(
\sum\limits_{(i,j,i',j')\in K} z_{e,i,j} z'_{e,i',j'} )}{\prod_e\left(\prod_{i,j}
{z_{e,i,j}}^{a_{v,i,j}n+1} \prod_{i',j'}
{z'_{e,i',j'}}^{a_{v',i',j'}n+1}\right)} d \vec{\bf z} \]
where we've set $x_{e,i,j} = z_{e,i,j}$ and $x'_{e,i',j'} = z'_{e,i',j'}$ to
emphasize that they are complex variables.

In using the saddle point method, we will use a circular path of radius $\rho$.
We use the same radius in every dimension. The method permits us to make this
choice, and we do so because it works.

We can now set $z_{e,i,j} = \rho e^{i\theta_{e,i,j}}$ and $z'_{e,i',j'} = \rho
e^{i\theta'_{e,i',j'}}$. The change of variables gives $d z_{e,i,j} = i \rho
e^{i \theta_{e,i,j}} d \theta_{e,i,j}$ (and similarly for the primed variables) so we cancel all
$|E|2k^2$ copies of $i$ in the denominator as well as one copy of each $z_{e,i,j}$ and $z'_{e,i',j'}$ from the denominator and get

\[ \frac{1}{(2\pi)^{|E|2k^2}} \int \frac{\exp( \rho^2 \sum\limits_e
\sum\limits_{(i,j,i',j')\in K} e^{i(\theta_{e,i,j} + \theta'_{e,i',j'})})}{
\rho^{|E|2n} \exp\left( in \sum\limits_e( \sum\limits_{i,j}
\theta_{e,i,j} a_{v,i,j} + \sum\limits_{i',j'} \theta'_{e,i',j'} a_{v',i',j'} )
\right) } d\vec{\bm \theta} \]
where $\vec{\bm \theta}$ is a vector of all of the $\theta$s and $\theta'$s.

Now we set all of the $\theta$s to zero to find the value on the real line and
choose $\rho$ to optimize (the log of) this value:
\[ \left(\rho^2 |E| k^2(k-1)^2 - |E| 2n \log \rho\right)' = 0 \]
which is accomplished, after some elementary calculus, by
\[ \rho = \sqrt{\frac{n}{k^2(k-1)^2}}. \]

Plugging in $\rho$, we can rewrite the equation above as:
\[ \frac{1}{(2\pi)^{|E|2k^2}}
\left(\frac{k^2(k-1)^2}{n}\right)^{|E|n} \int e^{h(\vec{\bm
\theta})} \ d \vec{\bm \theta} \]
so that we may analyze $h({\bm \theta})$:
\begin{align*}
h({\bm \theta}) &= \frac{n}{k^2(k-1)^2} \sum_e \sum_{(i,j,i',j') \in K}
e^{i(\theta_{e,i,j}+\theta'_{e,i',j'})} \quad -\\
	&\qquad\qquad - \quad in \sum_e\left( \sum_{i,j}
\theta_{e,i,j} a_{v,i,j} + \sum_{i',j'} \theta'_{e,i',j'} a_{v',i',j'}\right)
\end{align*}

Consider
\begin{align*}
|e^{h({\bm \theta})}| &= \exp\left(\Re\left(\frac{n}{k^2(k-1)^2} \sum_e \sum_{(i,j,i',j') \in K}
e^{i(\theta_{e,i,j}+\theta'_{e,i',j'})} \right. \right .\quad -\\
	&\qquad \qquad -  \left. \left. \quad in \sum_e\left( \sum_{i,j}
\theta_{e,i,j} a_{v,i,j} + \sum_{i',j'} \theta'_{e,i',j'} a_{v',i',j'}\right) \right)\right)\\
	&= \exp\left( \frac{n}{k^2(k-1)^2} \sum_e \sum_{(i,j,i',j') \in K}
\Re (e^{i(\theta_{e,i,j}+\theta'_{e,i',j'})}) \right)\\
	&= \exp\left( \frac{n}{k^2(k-1)^2} \sum_e \sum_{(i,j,i',j') \in K}
\cos(\theta_{e,i,j}+\theta'_{e,i',j'}) \right)
\end{align*}
where we use $\Re(z)$ to denote the real part of $z$.

Thus in order to maximize $|e^{h({\bm \theta})}|$, we should make $\theta_{e,i,j} +
\theta'_{e,i',j'}$ close to zero for every pair $\theta_{e,i,j}, \theta'_{e,i',j'}$. If some pair has sum far from zero, we should be able to
show $|e^{h({\bm \theta})}|$ is negligible. We now formalize this notion.

For each edge $e$, define
\[
\theta_e = \frac{1}{2k^2} \left( \sum_{i,j} \theta_{e,i,j} -  \sum_{i',j'} \theta'_{e,i',j'} \right)
\]
to be a weighted average of the $\theta$s. Then define $\delta_{e,i,j}$ and $\delta'_{e,i',j'}$ by
\[
\theta_{e,i,j} = \theta_e + \delta_{e,i,j}
\qquad
\theta'_{e,i',j'} = -\theta_e - \delta'_{e,i',j'}.
\]
Clearly, any choice of the $\theta_{e,i,j}$s and $\theta'_{e,i',j'}$ determines
$\theta_e$s and the $\delta$s. Furthermore, as
\begin{align*}
\theta_e &= \frac{1}{2k^2} \left( \sum_{i,j} \theta_{e,i,j} -  \sum_{i',j'}
\theta'_{e,i',j'} \right)\\
&= \frac{1}{2k^2} \left( \sum_{i,j} (\theta +
\delta_{e,i,j}) -  \sum_{i',j'} ( -\theta - \delta'_{e,i',j'}) \right)
\\
&= \theta_e + \frac{1}{2k^2} \left(\sum_{i,j} \delta_{e,i,j} + \sum_{i',j'}
\delta'_{e,i',j'}\right)
\end{align*}
we see that every choice of the $\theta_e$s and $\delta$s satisfying the linear
constraints
\[ \left(\sum_{i,j} \delta_{e,i,j} + \sum_{i',j'} \delta'_{e,i',j'}\right) = 0 \]
determines a valid choice of $\theta_{e,i,j}$s and $\theta'_{e,i',j'}$s.

Let $R \subseteq \mathbb{R}^{|E|(2k^2+1)}$ be the subspace of choices
of $\theta_e$'s and $\delta$'s that correspond to valid choices of
$\theta_{e,i,j}$'s and $\theta'_{e,i',j'}$'s. Partition $R$ into $R_1$ and $R_2$,
where each point in $R_1$ satisfies
\[ |\delta_{e,i,j}|, |\delta'_{e,i',j'}| \leq \varepsilon = \frac{\log
n}{\sqrt{n}} \]
for all $e,i,j,i'$ and $j'$ (and the $\theta_e$s take any value in $[0,2\pi]$)
and $R_2$ is the complementary region.

We claim that
\[ \int_{R_2} e^{h({\bm \theta})} \ d\theta d{\bm \delta} =
o\left(\left(\frac{e^{n}}{\sqrt{n}}\right)^{|E|} \right).\]

For each point in $R_2$, there is some pair $(\delta_{e,i,j},
\delta'_{e,i',j'})$ such that
\[ |\delta_{e,i,j} - \delta'_{e,i',j'}| > 2\varepsilon. \]
Then
\begin{align*}
|e^{h({\bm \theta})}| &= \exp\left( \frac{n}{k^2(k-1)^2} \sum_e \sum_{(i,j,i',j') \in K}
\cos(\theta_{e,i,j}+\theta'_{e,i',j'}) \right)\\
	&= \exp\left( \frac{n}{k^2(k-1)^2} \sum_e \sum_{(i,j,i',j') \in K} \cos(\theta +
\delta_{e,i,j}-\theta - \delta'_{e,i',j'}) \right)\\
	&< \exp\left( \frac{n}{k^2(k-1)^2} \sum_e \sum_{(i,j,i',j') \in K} \cos(2\varepsilon)
\right)\\
	&= \exp\left( \frac{n}{k^2(k-1)^2} \sum_e \sum_{(i,j,i',j') \in K} 1-\Omega(\varepsilon^2)
\right)\\
	&= \exp\left( n |E|
\left(1-\Omega\left(\frac{\log^2 n}{n}\right)\right) \right)\\	
	&= \exp\left(|E| \left(n-\Omega(\log^2 n)\right) \right)
\end{align*}
and so
\[
\left| \int_{R_2} e^{h({\bm \theta})} \ d{\bm \theta} \right| \leq (2\pi
\rho)^{|E|2k^2} e^{|E|\left(n - \Omega(\log^2 n)\right)} =
o\left( \left(\frac{e^{n}}{\sqrt{n}}\right)^{|E|} \right)
\]
as each $\theta_{e,i,j}, \theta'_{e,i',j'} \in [0,2\pi \rho]$, $\rho =
O(\sqrt{n})$ and $n^{-\Omega(\log^2n)} = o(\frac{1}{n})$.

Next, we show that this value is negligible compared to the integral of the
function over $R_1$.

For points in $R_1$, we use a Taylor expansion for the exponential term:
\begin{align*}
h({\bm \theta}) &= \frac{n}{k^2(k-1)^2} \sum_e \sum_{(i,j,i',j') \in K} \hspace{-10pt} \left( 1 + i(\theta_{e,i,j} -
\theta'_{e,i',j'}) - \tfrac{1}{2}(\theta_{e,i,j}+\theta'_{e,i',j'})^2 +
O(\varepsilon^3)\right) \quad -
\\
& \qquad\qquad - \quad in \sum_e\left( \sum_{i,j} \theta_{e,i,j}
a_{v,i,j} + \sum_{i',j'} \theta'_{e,i',j'} a_{v',i',j'}\right)
\end{align*}
as
\[ |\theta_{e,i,j} + \theta'_{e,i',j'}| = |\theta + \delta_{e,i,j} - \theta -
\delta'_{e,i',j'}| \leq |\delta_{e,i,j}| + |\delta'_{e,i',j'}| \leq
2\varepsilon. \]

Now we sum the relevant terms to get

\begin{align*}
h({\bm \theta}) &= \frac{n}{k^2(k-1)^2}\cdot |E|k^2(k-1)^2
	+ \frac{n}{k^2(k-1)^2} \cdot (k-1)^2 \left(\sum_{i,j} \theta_{e,i,j} +
\sum_{i',j'} \theta'_{e,i',j'}\right) \quad \hspace{-1pt}-\\
	&\qquad \qquad - \quad\frac{n}{2k^2(k-1)^2} \sum_e \sum_{(i,j,i',j') \in K} (\theta_{e,i,j} +
\theta'_{e,i',j'})^2
	+O(n\varepsilon^3) \quad -\\
	&\qquad \qquad - \quad in \sum_e\left( \sum_{i,j} \theta_{e,i,j}
a_{v,i,j} + \sum_{i',j'} \theta'_{e,i',j'} a_{v',i',j'}\right)
\end{align*}

or, cleaning up and combining terms,
\begin{align*}
 h({\bm \theta}) &= \sum_e \left(n + in \left( \sum_{i,j}
\theta_{e,i,j}(\tfrac{1}{k^2}-a_{v,i,j}) + \sum_{i',j'} \theta'_{e,i',j'}
(\tfrac{1}{k^2} - a_{v',i',j'})\right) \quad- \right. \\
& \qquad\qquad-\quad \left.\frac{n}{2k^2(k-1)^2} \sum_{(i,j,i',j') \in K}
(\theta_{e,i,j} + \theta'_{e,i',j'})^2 \right) + O(n\varepsilon^3)
\end{align*}

Note that
\[ n\varepsilon^3 = n\frac{\log^3 n}{n^{3/2}} = \frac{\log^3 n}{\sqrt{n}} = o(1). \]

Recall that $h({\bm \theta})$ is the exponent of the integral in which we're
interested. Thus we define $h_e$ to be the term corresponding to edge $e$ and
note that
\[e^{h({\bm \theta})} = (1+O(1))\prod_e e^{h_e}.\]
For convenience, we now drop the $e$ from the subscripts of $\theta_{e,i,j}, \theta'_{e,i',j'}, a_{e,i,j}$ and $a'_{e,i',j'}$; each variable will be associated with the edge indicated by $h_e$. Furthermore, let $\alpha_{i,j} = a_{v,i,j}-\frac{1}{k^2}$ and $\alpha'_{i',j'} =
\alpha_{v',i',j'}-\frac{1}{k^2}$. Then

\[ h_e = n - in \left( \sum_{i,j}
\theta_{i,j}\alpha_{i,j} + \sum_{i',j'} \theta'_{i',j'}\alpha'_{i',j'}\right) -
\frac{n}{2k^2(k-1)^2} \sum_{(i,j,i',j') \in K} (\theta_{i,j} + \theta'_{i',j'})^2 \]

Let ${\bm \alpha}_{\textrm{all}}$ be a vector of each $\alpha$ and $\alpha'$ so that we may write
\begin{align*}
h_e &= n - in \langle {\bm \theta}, {\bm \alpha}_{\textrm{all}} \rangle -
\frac{n}{2k^2(k-1)^2} \quad \times\\
&\qquad\qquad\times\quad\left( (k-1)^2 \sum_{i,j} \theta_{i,j}^2 + (k-1)^2
\sum_{i',j'}{\theta'}^2_{i',j'} + 2\sum_{(i,j,i',j') \in K} \theta_{i,j} \theta'_{i',j'}\right)
\end{align*}
The last term of $h_e$ is a quadratic form. With $I_m$ and $J_m$ denoting the $m\times m$ identity matrix and the $m\times m$ matrix whose entries are all $1$s, respectively, set
\[ B = (k-1)^2 I_{2k^2} + \begin{bmatrix}0 & 1 \\ 1 &
0\end{bmatrix} \otimes (J_k - I_k)^{\otimes 2} \]
so that
\[ h_e = n - in \langle {\bm \theta}, {\bm \alpha}_{\textrm{all}} \rangle -
\frac{n}{2k^2(k-1)^2} {\bm \theta}^T B {\bm \theta} \]

We now analyze the spectrum of $B$. First
\[ \spec\{J_k-I_k\} = \{k-1, (-1)_{k-1}\}\]
as $J_k$ has eigenvalues $k$ with multiplicity $1$ and $0$ with multiplicity $k-1$ while $I_k$ has eigenvalue $1$ with multiplicity $k$. Then
\[ \spec\{(J_k-I_k)^{\otimes 2}\} = \{(k-1)^2, (1-k)_{2k-2},
1_{(k-1)^2}\} \]
as the Kronecker product of two matrices has an eigenvalue for each pair of eigenvalues of base matrices. Similarly,
\begin{align*}
\spec\{\genfrac{[}{]}{0pt}{1}{ 0 \ 1}{ 1 \ 0} \otimes (J_k-I_k)^{\otimes 2}\} = \{&
(k-1)^2, -(k-1)^2, (1-k)_{2k-2},\\
&  (k-1)_{2k-2}, 1_{(k-1)^2}, (-1)_{(k-1)^2}\}
\end{align*}
as $\genfrac{[}{]}{0pt}{1}{ 0 \ 1}{ 1 \ 0}$ has eigenvalues $-1$ and $1$.

Finally, as $(k-1)^2 I_{2k}$ has eigenvalue $(k-1)^2$ with multiplicity $2k$,
\begin{align*}
\spec\{B\} = \{& 2(k-1)^2, 0, ((k-1)^2+(1-k))_{2k-2},
((k-1)^2+(k-1))_{2k-2},\\
	&((k-1)^2+1)_{(k-1)^2}, ((k-1)^2-1)_{(k-1)^2}\}
\end{align*}

Let $\mbf f_{i,j}$ be vectors in $\mathbb R^{k^2}$ as defined in Lemma~6 from
\cite{KPW10}. They are an orthonormal basis of eigenvectors of $(J_k-I_k)^{\otimes2}$. Let $\mbf{w}_{i,j} = \begin{bmatrix}1/\sqrt2\\1/\sqrt2\end{bmatrix} \otimes
\mbf f_{i,j} = \genfrac{[}{]}{0pt}{}{\mbf f_{i,j} /\sqrt2}{\mbf f_{i,j}/\sqrt2}$
and $\mbf{w'}_{i',j'} = \begin{bmatrix}1/\sqrt2\\-1/\sqrt2\end{bmatrix} \otimes
\mbf f_{i',j'} = \genfrac{[}{]}{0pt}{}{\mbf f_{i',j'} /\sqrt2}{-\mbf f_{i',j'}/\sqrt2 }$. The $\mbf{w}_{i,j}$ and the $\mbf{w'}_{i',j'}$ are an
orthonormal basis of $\mathbb R^{2k^2}$ and moreover are eigenvectors of $B$.
The corresponding eigenvalues are
\[
\lambda_{i,j} = \begin{cases}
(k-1)^2 + 1 & i,j\ne k
\\
(k-1)^2 - (k-1) = (k-1)(k-2) & \text{$( i=k)$ xor $(j=k)$}
\\
(k-1)^2 + (k-1)^2 = 2(k-1)^2 & i=j=k
\end{cases}
\]
and
\[
\lambda'_{i',j'} = \begin{cases}
(k-1)^2 - 1 = k(k-2) & i',j'\ne k
\\
(k-1)^2 + (k-1) = k(k-1) & \text{$( i'=k)$ xor $(j'=k)$}
\\
(k-1)^2 - (k-1)^2 = 0 & i'=j'=k
\end{cases}
\]

Express $\vec{\bm \theta}$ in terms of this new basis:
\[ \vec{\bm \theta} = \sum_{i,j} \tau_{i,j} \mbf{w}_{i,j} + \sum_{i',j'}
\tau'_{i',j'} \mbf{w}'_{i,j} \]
so that we may write
\begin{align*}
h_e &= n - in \left( \sum_{i,j} \tau_{i,j} \langle \mbf{\hat
w}_{i,j}, \vec{\bm \alpha} \rangle + \sum_{i',j'} \tau'_{i',j'} \langle
\mbf{w}'_{i',j'}, \vec{\bm \alpha} \rangle \right) \quad - \\
	&\qquad \qquad - \quad \frac{n}{2k^2(k-1)^2}\left( \sum_{i,j}\lambda_{i,j} \tau_{i,j}^2 + \sum_{i',j'}
\lambda'_{i',j'} {\tau'}^2_{i',j'} \right)
\end{align*}

Recall that our definition of
\[ \theta = \theta_e = \frac{1}{2k^2}\left(\sum_{i,j} \theta_{i,j} - \sum_{i',j'} \theta'_{i',j'}\right) \]
forced
\[ \sum_{i,j} \delta_{i,j} - \sum_{i',j'} \delta'_{i',j'} = 0. \]
Noting that $\mbf{w}'_{k,k} = \frac{1}{\sqrt{2k^2}} \begin{bmatrix}
\mbf{1}_{k^2}\\-\mbf{1}_{k^2}\end{bmatrix}$, this is equivalent to
\[ \langle \vec{\bm \delta}, \mbf{w}'_{k,k} \rangle = 0. \]
Thus
\[
\vec{\bm \theta} = \theta \sqrt{2k^2} \mbf{w}'_{k,k} + \vec{\bm \delta}.
\]
We conclude $\tau'_{k,k} = \theta \sqrt{2k^2}$ and that we can express
\[ \vec{\bm \delta} = \sum_{i,j} \tau_{i,j} \mbf{w}_{i,j} +
\sum_{\mathclap{\substack{i',j' \\ (i',j') \neq (k,k)}}} \tau'_{i',j'}
\mbf{w}'_{i,j} \]

In order to integrate over $R_1$, we must express $R_1$ in terms of the $\tau$.
As $\theta$ can take any value in $[0,2\pi]$, we have $\tau'_{k,k} \in
[0,2\pi\sqrt{2k^2}]$. For each other $\tau$ and $\tau'$, we make the following
argument:

Because we are in $R_1$, each $\delta_{i,j}$ and $\delta'_{i',j'}$ satisfies
$|\delta_{i,j}|, |\delta'_{i',j'}| \leq \varepsilon$. Therefore, $||\vec{\bm
\delta}||_{\infty} \leq \varepsilon$. Note that
\[ ||\vec{\bm \delta}||_2^2 = \sum_{i,j} \delta_{i,j}^2 + \sum_{i',j'}
{\delta'_{i',j'}}^2 \leq 2k^2 ||\vec{\bm \delta}||_\infty^2 \leq 2k^2
\varepsilon^2 \]
and thus $||\vec{\bm \delta}||_2 = O(\varepsilon)$. As the $\mbf{w}$ are
orthonormal, we see $||\vec{\bm \delta}||_2 = ||\vec{\bm \tau}^{-}||_2$, where
$\vec{\bm \tau}^{-}$ is a vector of the $\tau_{i,j}$ and $\tau'_{i',j'}$ without
$\tau'_{k,k}$. Finally, as
\[ ||\vec{\bm \tau}^{-}||_2^2 = \sum_{i,j} \tau_{i,j}^2 +
\sum_{\mathclap{\substack{i',j' \\ (i',j') \neq (k,k)}}} {\tau'_{i',j'}}^2 \geq
\max\biggl( \max_{i,j} \tau_{i,j}^2, \max_{\mathclap{\substack{i',j' \\ (i',j')
\neq (k,k)}}} {\tau'_{i',j'}}^2\biggr) = ||\vec{\bm \tau}^{-}||_\infty^2 \]
we conclude
\[ ||\vec{\bm \tau}^{-}||_\infty \leq ||\vec{\bm \tau}^{-}||_2 = O(\varepsilon) \]
so that each $\tau$ and $\tau'$ (except $\tau'_{k,k}$) is $O(\varepsilon)$.

Note further that as the $\mbf{w}$ are orthogonal, we have $d \vec{\bm
\theta} = d \vec{\bm \tau}$, so by setting
\[ I_{i,j} = \int_{-O(\varepsilon)}^{O(\varepsilon)} e^{-in \tau_{i,j}
\langle \mbf{w}_{i,j}, \vec{\bm \alpha} \rangle - \frac{n}{2k^2(k-1)^2}
\lambda_{i,j} \tau_{i,j}^2} \ d\tau_{i,j}, \]
\[ I'_{i',j'} = \int_{-O(\varepsilon)}^{O(\varepsilon)} e^{-in
\tau'_{i',j'}
\langle \mbf{w}'_{i',j'}, \vec{\bm \alpha} \rangle - \frac{n}{2k^2(k-1)^2}
\lambda'_{i',j'} {\tau'_{i',j'}}^2} \ d\tau'_{i',j'} \]
for $(i',j') \neq (k,k)$ and
\[ I'_{k,k} = \int_0^{2\pi\sqrt{2k^2}} e^{-in \tau'_{k,k}
\langle \mbf{w}'_{k,k}, \vec{\bm \alpha} \rangle - \frac{n}{2k^2(k-1)^2}
\lambda'_{k,k} {\tau'_{k,k}}^2} \ d\tau'_{k,k}\]

We have
\[ \mathcal{I}(\mbf A) \sim \frac{1}{(2\pi)^{|E|2k^2}}
\left(\frac{k^2(k-1)^2}{n}\right)^{|E|n} \prod_e \left(e^n
\prod_{i,j} I_{i,j} \prod_{i',j'} I'_{i',j'} \right). \]

We wish to approximate each integral. First, we claim that for $i = k$ or $j =
k$, we have $\langle \mbf{w}_{i,j}, \vec{\bm \alpha} \rangle = 0$ and
similarly for $i' = k$ or $j' = k$, we have $\langle \mbf{w}'_{i',j'},
\vec{\bm \alpha} \rangle = 0$. This can be seen in the proof of Lemema 7 of
\cite{KPW10}. Furthermore, note that $\lambda'_{k,k} = 0$. Thus
\[ I'_{k,k} =  \int_0^{2\pi\sqrt{2k^2}} e^0 \ d\tau'_{k,k} = 2\pi \sqrt{2k^2}. \]

For the other integrals, we make another substitution $x_{i,j} = \sqrt{n}
\tau_{i,j}$ and $x'_{i',j'} = \sqrt{n} \tau'_{i',j'}$, so noting $\sqrt{n}
\varepsilon = O(\log n)$, the limits of the new integral go from $-O(\log n)$ to
$O(\log n)$, which we approximate by $-\infty$ and $\infty$, to get
\[ I_{i,j} \sim \frac{1}{\sqrt{n}} \int_{-\infty}^\infty
e^{-i\sqrt{n} \langle \mbf{w}_{i,j}, \vec{\bm \alpha} \rangle
x_{i,j} - \frac{\lambda_{i,j}}{2k^2(k-1)^2} x_{i,j}^2} \ d x_{i,j} \]
and
\[ I'_{i',j'} \sim \frac{1}{\sqrt{n}} \int_{-\infty}^\infty
e^{-i\sqrt{n} \langle \mbf{w}'_{i',j'}, \vec{\bm \alpha} \rangle
x'_{i',j'} - \frac{\lambda'_{i',j'}}{2k^2(k-1)^2} {x'_{i',j'}}^2} \ d x'_{i',j'} \]
for $(i',j') \neq (k,k)$.

Using the equation
\[ \int_{-\infty}^\infty e^{ax-bx^2} = \sqrt{\frac{\pi}{b}}
\exp\left(\frac{a^2}{4b}\right) \]
we get
\[ I_{i,j} = \sqrt{\frac{2\pi k^2(k-1)^2}{n\lambda_{i,j}}}
\exp\left(-\frac{nk^2(k-1)^2 \langle \mbf{w}_{i,j}, \vec{\bm \alpha} \rangle^2}
{2\lambda_{i,j}} \right) \]
and
\[ I'_{i',j'} = \sqrt{\frac{2\pi k^2(k-1)^2}{n\lambda'_{i',j'}}}
\exp\left(-\frac{nk^2(k-1)^2 \langle \mbf{w}'_{i',j'}, \vec{\bm \alpha} \rangle^2}
{2\lambda'_{i',j'}} \right) \]
for $(i',j') \neq (k,k)$.

Thus
\begin{align*}
\prod_{i,j} I_{i,j} \prod_{i',j'} I'_{i',j'} &\sim 
2\pi \sqrt{2k^2}\prod_{i,j} \sqrt{\frac{2\pi k^2(k-1)^2}{n\lambda_{i,j}}}
\exp\left(-\frac{nk^2(k-1)^2 \langle \mbf{w}_{i,j}, {\bm \alpha} \rangle^2}
{2\lambda_{i,j}} \right) \quad \times
\\
& \qquad \qquad \times \quad \prod_{\mathclap{\substack{i',j' \\ (i',j') \neq (k,k)}}} \sqrt{\frac{2\pi
k^2(k-1)^2}{n\lambda'_{i',j'}}} \exp\left(-\frac{nk^2(k-1)^2 \langle \mbf{\hat
w}'_{i',j'}, {\bm \alpha} \rangle^2} {2\lambda'_{i',j'}} \right)\\
	&= \frac{2\pi}{\det \mathcal L}\left(\sqrt{\frac{2\pi k^2(k-1)^2}{n}}\right)^{2k^2-1}\hspace{-6pt}\prod_{i,j} \exp\left(-\frac{nk^2(k-1)^2 \langle
\mbf{w}_{i,j}, {\bm \alpha} \rangle^2} {2\lambda_{i,j}} \right) \quad \hspace{-2pt}\times
\\
&  \qquad  \qquad \times \quad \prod_{\mathclap{\substack{i',j' \\ (i',j') \neq (k,k)}}} \exp\left(-\frac{n(k-1)^2 \langle \mbf{w}'_{i',j'}, {\bm
\alpha} \rangle^2} {2\lambda'_{i',j'}} \right)
\end{align*}
where
\[ \det \mathcal{L} = \sqrt{ \frac{1}{2k^2}\prod_{i,j} \lambda_{i,j}
\prod_{\mathclap{\substack{i',j' \\ (i',j') \neq (k,k)}}} \lambda'_{i',j'} }
\]

Recall that for $i,i' = k$ or $j,j' = k$, we have
\[ \langle \mbf{w}_{i,j}, \vec{\bm \alpha} \rangle = 0 = \langle \mbf{\hat
w}'_{i',j'}, \vec{\bm \alpha} \rangle.\]
Thus we need only consider the
eigenvalues where neither $i$ nor $j$ (or neither $i'$ not $j'$) are equal to
$k$; the numerator in the other terms is zero. This
gives us common denominators, allowing us to write
\begin{align*}
\prod_{i,j} I_{i,j} \prod_{i',j'} I_{i',j'} & \sim 2\pi\left(\sqrt{\frac{2\pi
k^2(k-1)^2}{n}}\right)^{2k^2-1} \frac{1}{\det\mathcal L}\quad \times
\\
& \qquad \qquad \times \quad  \exp \left(
-nk^2(k-1)^2\left( \frac{\sum_{i,j}
\langle \mbf{w}_{i,j}, \vec{\bm \alpha} \rangle^2}{2\lambda} + \frac{\sum_{i',j'}
\langle \mbf{w}'_{i',j'}, \vec{\bm \alpha} \rangle^2}{2\lambda'} \right)\right)
\end{align*}
with $\lambda$ and $\lambda'$ as defined in \eqref{eq:lambdas}.

We may now return to our evaluation of $\mathcal{I}(\mbf A)$:
\begin{align*}
\mathcal{I}(\mbf A) \sim& \frac{1}{(2\pi)^{|E|2k^2}}
\left(\frac{k^2(k-1)^2}{n}\right)^{|E|n} \prod_e \left( e^n
\frac{2\pi}{\det\mathcal L}\left(\sqrt{\frac{2\pi k^2(k-1)^2}{n}}\right)^{2k^2-1} \quad \times \right.\\
	&\qquad\qquad\times\quad\left.\exp \left( -nk^2(k-1)^2\left( \frac{\sum_{i,j}
\langle \mbf{w}_{i,j}, {\bm \alpha} \rangle^2}{2\lambda} + \frac{\sum_{i',j'}
\langle \mbf{w}'_{i',j'}, {\bm \alpha} \rangle^2}{2\lambda'} \right)\right)
\right)\\
	=& \left(\frac{(k^2(k-1)^2)^{n +
k^2-\frac{1}{2}}}{(\nicefrac{n}{e})^n(\sqrt{2\pi n})^{2k^2-1}
\det\mathcal L}\right)^{|E|} \quad \times\\
	&\qquad \qquad \times \quad \prod_e \left( \exp \left(
\frac{- nk^2(k-1)^2}{2} \right. \right. \quad \times\\
	&\qquad \qquad \times \quad \left. \left. \left(\frac{\sum_{i,j}\langle \mbf{\hat
w}_{i,j}, {\bm \alpha}_{\textrm{all}} \rangle^2}{\lambda} +
\frac{\sum_{i',j'}\langle \mbf{w}'_{i',j'}, {\bm
\alpha}_{\textrm{all}} \rangle^2}{\lambda'}\right) \right) \right)
\end{align*}
Recall that $\mbf{w}_{i,j} = \genfrac{[}{]}{0pt}{}{\mbf f_{i,j} /\sqrt2}{\mbf
f_{i,j}/\sqrt2 }$, $\mbf{w'}_{i',j'} = \genfrac{[}{]}{0pt}{}{\mbf f_{i',j'}
/\sqrt2}{-\mbf f_{i',j'}/\sqrt2 }$ and ${\bm \alpha}_{\textrm{all}}$ is a
column vector of the $\alpha$s atop the $\alpha'$s. We now break ${\bm \alpha}_{\textrm{all}}$ into two column vectors, ${\bm \alpha}$ containing the $\alpha_{i,j}$ and ${\bm \alpha}'$ containing the $\alpha'_{i',j'}$ so that we may write:
\[ \langle \mbf{w}_{i,j}, {\bm \alpha}_{\textrm{all}} \rangle =
\frac{1}{\sqrt{2}}\left(\langle \mbf f_{i,j}, {\bm \alpha} \rangle + \langle
\mbf f_{i,j}, {\bm \alpha}' \rangle\right) = \frac{1}{\sqrt{2}} \langle \mbf
f_{i,j}, {\bm \alpha} + {\bm \alpha}' \rangle\]
and
\[\langle \mbf{w}'_{i,j}, {\bm \alpha}_{\textrm{all}} \rangle =
\frac{1}{\sqrt{2}}\left(\langle \mbf f_{i,j}, {\bm \alpha} \rangle - \langle
\mbf f_{i,j}, {\bm \alpha}' \rangle \right)= \frac{1}{\sqrt{2}} \langle \mbf
f_{i,j}, {\bm \alpha} - {\bm \alpha}' \rangle \]
Furthermore, as the $\mbf f_{i,j}$ form a basis,
\[ \sum_{i,j} \langle \mbf f_{i,j}, \vec{\bm \alpha} + \vec{\bm \alpha}' \rangle
= ||\vec{\bm \alpha} + \vec{\bm \alpha}'||^2 \quad \text{ and } \sum_{i,j}
\langle \mbf f_{i,j}, \vec{\bm \alpha} - \vec{\bm \alpha}' \rangle = ||\vec{\bm
\alpha} - \vec{\bm \alpha}'||^2 \]
This simplification gives
\begin{align}
\mathcal{I}(\mbf A) &\sim \left(\frac{(k^2(k-1)^2)^{n +
k^2-\frac{1}{2}}}{(\nicefrac{n}{e})^n(\sqrt{2\pi n})^{2k^2-1}
\det\mathcal L}\right)^{|E|} \quad \times \notag \\
	& \qquad \qquad \times \quad \prod_e \left( \exp
\left( \frac{- nk^2(k-1)^2}{2} \left(\frac{||\vec{\bm \alpha}+\vec{\bm
\alpha}'||^2}{2\lambda} + \frac{||\vec{\bm \alpha} - \vec{\bm \alpha}'||^2}{2\lambda'}
\right) \right)
\right) \notag\\
	&= \left(\frac{(k^2(k-1)^2)^{n + k^2-\frac{1}{2}}}{(\nicefrac{n}{e})^n(\sqrt{2\pi n})^{2k^2-1}
\det\mathcal L}\right)^{|E|} \quad \times \notag \\
	& \qquad \qquad \times \quad \exp
\left( \frac{- nk^2(k-1)^2}{2} \sum_e \left(\frac{||\vec{\bm
\alpha}+\vec{\bm \alpha}'||^2}{2\lambda} + \frac{||\vec{\bm \alpha} - \vec{\bm
\alpha}'||^2}{2\lambda'} \right) \right)\notag\\
	&= (\nicefrac{n}{e})^{-n|E|}\left(\frac{(k^2(k-1)^2)^{
k^2-\frac{1}{2}}}{(2\pi n)^{\frac{1}{2}(2k^2-1)}
\det\mathcal L}\right)^{|E|} (k^2(k-1)^2)^{n|E|} \quad \times\notag\\
	&\qquad \qquad \times \quad \exp
\left( \frac{- nk^2(k-1)^2}{2} \sum_e \left(\frac{||\vec{\bm
\alpha}+\vec{\bm \alpha}'||^2}{2\lambda} + \frac{||\vec{\bm \alpha} - \vec{\bm
\alpha}'||^2}{2\lambda'} \right) \right)\notag\\
	&= (\nicefrac{n}{e})^{-n|E|}\left(\frac{(k^2(k-1)^2)^{
k^2-\frac{1}{2}}}{(2\pi n)^{\frac{1}{2}(2k^2-1)}
\det\mathcal L}\right)^{|E|} \exp
\left( n|E|\log(k^2(k-1)^2) \right.\quad -\notag\\
	&\qquad \qquad - \quad \left.\frac{nk^2(k-1)^2}{2}  \sum_e \left(\frac{||\vec{\bm
\alpha}+\vec{\bm \alpha}'||^2}{2\lambda} + \frac{||\vec{\bm \alpha} - \vec{\bm
\alpha}'||^2}{2\lambda'} \right) \right)\notag\\
	&=(\nicefrac{n}{e})^{-n|E|}\left(\frac{(k^2(k-1)^2)^{
k^2-\frac{1}{2}}}{(2\pi n)^{\frac{1}{2}(2k^2-1)}
\det\mathcal L}\right)^{|E|}  \exp
\left( -\frac{nk^2(k-1)^2}{2} \right. \quad \times\notag\\
	&\qquad \qquad \times \quad \left.\sum_e \left(\frac{||\vec{\bm
\alpha}+\vec{\bm \alpha}'||^2}{2\lambda} + \frac{||\vec{\bm \alpha} - \vec{\bm
\alpha}'||^2}{2\lambda'} + \frac{2}{k^2(k-1)^2} \log \frac{1}{k^2(k-1)^2} \right) \right) \label{eq:inner_sum_almost}
\end{align}

Now recall
\begin{align*}
\det \mathcal L &= \sqrt{ \frac{1}{2k^2}\prod_{i,j} \lambda_{i,j}
\prod_{\mathclap{\substack{i',j' \\ (i',j') \neq (k,k)}}} \lambda'_{i',j'} }\\
	&= \left( \frac{1}{2k^2} (\lambda)^{(k-1)^2}((k-1)(k-2))^{2(k-1)}(2(k-1)^2)(\lambda')^{(k-1)^2}(k(k-1))^{2(k-1)}\right)^{1/2}\\
	&= (\lambda)^{\frac{1}{2}(k-1)^2}(k-2)^{(k-1)+\frac{1}{2}(k-1)^2}(k-1)^{2(k-1)-1}k^{\frac{1}{2}(k-1)^2+(k-1)-1}
\end{align*}
so
\begin{align*}
\left(\frac{(k^2(k-1)^2)^{
k^2-\frac{1}{2}}}{(2\pi)^{\frac{1}{2}(2k^2-1)}
\det\mathcal L}\right)^{|E|} &= \left(\frac{k^{\frac{1}{2}(3k^2+1)}(k-1)^{
2k^2-2k}}{(2\pi n)^{\frac{1}{2}(2k^2-1)}
(\lambda)^{\frac{1}{2}(k-1)^2}(k-2)^{\frac{1}{2}(k^2-1)}}\right)^{|E|}\\
	&= \left(\frac{k^{(3k^2+1)}(k-1)^{
4k(k-1)}}{(2\pi n)^{(2k^2-1)}
(\lambda)^{(k-1)^2}(k-2)^{(k^2-1)}}\right)^{\frac{1}{2}|E|}\\
	&= \gamma(n, k)^{\frac{1}{2}|E|}
\end{align*}
In addition, we have $\alpha_{i,j} = a_{v,i,j}-\frac{1}{k^2}$ and ${\alpha'}_{i',j'} = a_{v',i',j'}-\frac{1}{k^2}$. Making these substitutions into \eqref{eq:inner_sum_almost} completes the proof.
\end{proof}

\subsection{Outer Sum/Proof of Proposition~\ref{prop:expectation_y_squared}} \label{ssec:outer}

Having estimated the inner sum over the $\mbf B$s, we now return to \eqref{eq:ABsplit}. Using Propositions~\ref{prop:central_sum} with $\gamma = 1$, the definition of $S$ and the result of Proposition~\ref{prop:inner_sum}, the definitions of $\lambda$ and $\lambda'$ from \eqref{eq:lambdas}, and applying Stirling's approximation gives
\begin{align*}
\ex Y^2 &= \frac{1}{n!^{|E|}} \sum_{\mbf A} \prod_{v\in V}\binom{n}{n A_v}
\prod_{e\in E}(n A_v)!(nA_{v'})! \sum_{\mbf B}\prod_{e\in E}\frac{1}{(nB_e)!}\\
	&\sim \frac{1}{n!^{|E|}} \sum_{\mbf A \in S} \prod_{v\in V}\binom{n}{n A_v}
\prod_{e\in E}(n A_v)!(nA_{v'})! (\nicefrac{n}{e})^{-n|E|}\gamma(n,k)^{\frac{1}{2}|E|} \quad \times \\
	& \qquad \qquad \times \quad \exp \left( \frac{- nk^2(k-1)^2}{2} \left(\sum_{vv' \in E} \left( \frac{1}{2\lambda} \sum_{i,j}\left (a_{v,i,j} + a_{v',i,j} - \frac{2}{k^2}\right)^2 \quad + \right. \right. \right .\\
	& \qquad \qquad + \quad \left. \left. \left. \frac{1}{2\lambda'} \sum_{i,j} (a_{v,i,j} - a_{v',i,j})^2 + \frac{2}{k^2(k-1)^2}\log \frac{1}{k^2(k-1)^2}\right) \right) \right )\\
	&\sim (\xi(n))^{|E|} (\nicefrac{n}{e})^{-2n|E|} \sum_{\mbf{A} \in S} \prod_{v
\in V} \frac{\xi(n) (n/e)^n}{ \prod_{i,j} \xi(a_{v,i,j} n) (a_{v,i,j} n/e)^{a_{v,i,j} n}} \quad \times
\\
	&\qquad\qquad \times\quad
\proda{e\in E\\e=vv'} \left( \prod_{i,j}\xi(a_{v,i,j} n) \left(\tfrac{a_{v,i,j} n}{e}\right)^{a_{v,i,j} n} \right) \left( \prod_{i',j'} \xi(a_{v',i',j'} n) \left(\tfrac{a_{v',i',j'} n}{e}\right)^{a_{v',i',j'} n} \right)\quad \times\\
	& \qquad \qquad \times \quad \gamma(n,k)^{\frac{1}{2}|E|} \exp
\left( \frac{- nk^2(k-1)^2}{2} \left(\sum_{vv' \in E} \left( \frac{1}{2\lambda} \sum_{i,j}\left (a_{v,i,j} + a_{v',i,j} - \frac{2}{k^2}\right)^2 + \right. \right. \right. \\
	& \qquad \qquad + \quad \left. \left. \left. \frac{1}{2\lambda'} \sum_{i,j} (a_{v,i,j} - a_{v',i,j})^2 + \frac{2}{k^2(k-1)^2}\log \frac{1}{k^2(k-1)^2}\right) \right) \right )\\
	&\sim \sum_{\mbf{A} \in S} \prod_{v \in V} \prod_{i,j} a_{v,i,j}^{(d-1)a_{v,i,j}n}\prod_{v
\in V} \frac{\xi(n)}{ \prod_{i,j} \xi(a_{v,i,j} n)} \quad \times \\
	& \qquad \qquad \times \quad 
\proda{e\in E\\e=vv'} \frac{\left( \prod_{i,j}\xi(a_{v,i,j} n)\right)\left( \prod_{i',j'} \xi(a_{v',i',j'} n) \right)}{\xi(n)} \gamma(n,k)^{\frac{1}{2}|E|} \quad \times \\
	& \qquad \qquad \times \quad \exp
\left( \frac{- nk^2(k-1)^2}{2} \left(\sum_{vv' \in E} \left( \frac{1}{2\lambda} \sum_{i,j}\left (a_{v,i,j} + a_{v',i,j} - \frac{2}{k^2}\right)^2 + \right. \right. \right. \\
	& \qquad \qquad + \quad \left. \left. \left. \frac{1}{2\lambda'} \sum_{i,j} (a_{v,i,j} - a_{v',i,j})^2 + \frac{2}{k^2(k-1)^2}\log \frac{1}{k^2(k-1)^2}\right) \right) \right )\\
	&= \sum_{\mbf A \in S} P(\mbf A, n) e^{nF(\mbf{A})}
\end{align*}
where
\begin{align}
P(\mbf A,n) &= \gamma(n,k)^{\frac{1}{2}|E|} \prod_{v
\in V} \frac{\xi(n)}{ \prod_{i,j} \xi(a_{v,i,j} n)} \proda{vv'\in E} \frac{\left( \prod_{i,j}\xi(a_{v,i,j} n)\right)\left( \prod_{i',j'} \xi(a_{v',i',j'} n) \right)}{\xi(n)} \label{eq:pAequi}\\
F(\mbf A) &= (d-1)\sum_{v \in V} \sum_{i,j} a_{v,i,j} \log a_{v,i,j} \quad - \notag\\
	& \qquad \qquad - \quad  \frac{k^2(k-1)^2}{2}\left(\sum_{vv' \in E} \left( \frac{1}{2\lambda} \sum_{i,j}\left (a_{v,i,j} + a_{v',i,j} - \frac{2}{k^2}\right)^2 + \right. \right. \notag\\
	& \qquad \qquad + \quad \left. \left. \frac{1}{2\lambda'} \sum_{i,j} (a_{v,i,j} - a_{v',i,j})^2 + \frac{2}{k^2(k-1)^2}\log \frac{1}{k^2(k-1)^2}\right) \right)
\end{align}

First we bound $F(\mbf{A})$. Note that in our proof of Proposition~\ref{prop:exp_bound} we relaxed the constraints on $\mbf B$ in order to bound $f(\mbf A, \mbf B)$. However, in Section~\ref{ssec:inner} we have shown that 
\[ F(\mbf A) = \max\limits_{\mbf B \text{ s.t.~\eqref{eq:abcons2}}} f(\mbf A, \mbf B) \le  \max\limits_{\mbf B \text{ s.t.~\eqref{eq:brelaxedcons}}} f(\mbf A, \mbf B) \]
as the constraints of \eqref{eq:abcons2} are more strict than those of \eqref{eq:brelaxedcons}. Thus combining Proposition~\ref{prop:exp_bound} and Proposition \ref{prop:inner_sum} gives the following corollary:

\begin{corollary} \label{cor:exp_bound}
Suppose $d < \ell_k$. The maximum of $F(\mbf A)$ subject to \eqref{eq:abcons1}
is uniquely attained at the point $\mbf A = \mbf{\hat{A}}$ and equals
\[ F(\mbf{\hat{A}}) = f(\mbf{\hat{A}}, \mbf{\hat{B}}) = \log\left(k^{|V|}\left(\frac{k-1}{k}\right)^{|E|}\right)^{2n}. \]
\end{corollary}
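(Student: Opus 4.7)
The plan is to combine Proposition~\ref{prop:exp_bound} with the identity
\[
F(\mbf A) \;=\; \max_{\mbf B \text{ satisfying } \eqref{eq:abcons2}} f(\mbf A, \mbf B),
\]
extracted from the saddle-point analysis of the inner sum $\mathcal{I}(\mbf A)$ carried out in Section~\ref{ssec:inner}. Assuming this identity, I would first observe that the feasibility set of \eqref{eq:abcons2} sits inside the relaxed polytope defined by \eqref{eq:brelaxedcons}, since the marginal equalities of \eqref{eq:abcons2} are strictly stronger than only requiring each $B_e$ to be a probability distribution. Hence
\[
F(\mbf A) \;\le\; \max_{\mbf B \text{ satisfying } \eqref{eq:brelaxedcons}} f(\mbf A, \mbf B),
\]
and the proof of Proposition~\ref{prop:exp_bound} gives $F(\mbf A)\le g(\mbf A)\le g(\mbf{\hat A})=f(\mbf{\hat A},\mbf{\hat B})$, with the second inequality strict unless $\mbf A=\mbf{\hat A}$.

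To show equality holds at $\mbf A=\mbf{\hat A}$, I would recall from the proof of Proposition~\ref{prop:exp_bound} that the unique maximizer of $f(\mbf{\hat A},\mbf B)$ over the relaxed polytope is $\mbf B=\mbf{\hat B}$, and that $\mbf{\hat B}$ visibly satisfies the stronger constraints \eqref{eq:abcons2}. Thus the relaxation is tight at $\mbf A=\mbf{\hat A}$, and so
\[
F(\mbf{\hat A}) \;=\; f(\mbf{\hat A}, \mbf{\hat B}) \;=\; \log\left(k^{|V|}\left(\frac{k-1}{k}\right)^{|E|}\right)^{2n}
\]
by Proposition~\ref{prop:exp_bound}. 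Uniqueness of $\mbf{\hat A}$ as the maximizer of $F$ follows immediately from the strict inequality noted in the previous paragraph for $\mbf A\ne\mbf{\hat A}$.

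The main obstacle I anticipate is pinning down the identity $F(\mbf A)=\max_{\mbf B\text{ s.t.\ }\eqref{eq:abcons2}} f(\mbf A,\mbf B)$ rigorously, since it is asserted in the paragraph preceding the corollary but is not transparent from the closed-form expression defining $F(\mbf A)$. My intended verification is a Lagrange-multiplier calculation for the concave program $\max f(\mbf A,\mbf B)$ subject to the linear equalities of \eqref{eq:abcons2}: stationarity forces the optimal $b_{e,i,j,i',j'}$ to take a Boltzmann form $\propto a_{v,i,j}a_{v',i',j'}\exp(\mu_{e,v,i,j}+\mu_{e,v',i',j'})$, and substituting back — together with the second-order expansion around $\mbf{\hat A}$ implicit in the saddle-point evaluation, which suffices because the outer sum is restricted to the $(\log n/\sqrt{n})$-neighbourhood $S$ of $\mbf{\hat A}$ — should reproduce the quadratic-plus-entropy expression for $F(\mbf A)$ written down just before the statement of the corollary.
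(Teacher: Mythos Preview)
Your proposal is correct and follows the same approach as the paper: the paper's argument is exactly the chain $F(\mbf A)=\max_{\mbf B\text{ s.t.\ }\eqref{eq:abcons2}} f(\mbf A,\mbf B)\le \max_{\mbf B\text{ s.t.\ }\eqref{eq:brelaxedcons}} f(\mbf A,\mbf B)$, bounded via Proposition~\ref{prop:exp_bound}, with equality at $\mbf{\hat A}$ because $\mbf{\hat B}$ lies in the smaller polytope. Your caveat about the identity $F(\mbf A)=\max_{\mbf B} f(\mbf A,\mbf B)$ is well taken --- the paper simply asserts it as a consequence of the saddle-point computation in Section~\ref{ssec:inner} without spelling out the Lagrange/Boltzmann verification you sketch, so your proposed check would in fact make the argument more complete than the paper's own.
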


We have the trivial bound $P(\mbf A, n) \le n^{\frac{1}{2}|V|+k^2|E|}$ and therefore Proposition~\ref{prop:central_sum} tells us that asymptotically the sum over all $\mbf A$ satisfying \eqref{eq:acons2} is the same as the sum over those just in $S$. Therefore we now once again use Proposition~\ref{prop:laplacian_summation_gamma} to bound the sum
\[ \sum_{\mbf A\text{ s.t.~\eqref{eq:acons2}}} P(\mbf A, n) e^{nF(\mbf{A})} .\]
As in the proof
of Proposition~\ref{prop:EY}, we define a bipartite graph $\Gamma=\Gamma(V_\Gamma,
E_\Gamma)$ that allows us to express the equality constraints
in~\eqref{eq:acons2} in terms of its unsigned incidence matrix $D$.
The idea is to associate each equation in~\eqref{eq:acons2} to a vertex of
$\Gamma$ and every variable to an edge in a way that preserves the incidence
relations. To do this, we assign label $w_{v,1,i}$ to equation $\sum_{j}
a_{v,i,j} = 1/k$ and label $w_{v,2,j}$ to equation $\sum_{i} a_{v,i,j} = 1/k$.
The vertex set of $\Gamma$ is $V_\Gamma = V_{\Gamma,1} \cup  V_{\Gamma,2}$, where
\[
 V_{\Gamma,1} = \{ w_{v,1,i} : v\in V, i\in[k] \}
\and
 V_{\Gamma,2} = \{ w_{v,2,j} : v\in V, j\in[k] \}
\]
are the two sides of a bipartition.
The edge set is
\[
E_\Gamma = \{ a_{v,i,j}: v\in V, i,j\in [k]\},
\]
where each edge $a_{v,i,j}$ has endpoints $w_{v,1,i}$ and $w_{v,2,j}$ (i.e.~the
labels of the two equations in which variable $a_{v,i,j}$ appears). Note that
unlike in the proof of Proposition~\ref{prop:EY}, here we do not require $i \neq j$.
Then the equality constraints in~\eqref{eq:acons2} are equivalent to
\begin{equation}
D \mbf b = \mbf y,
\label{eq:Dby2}
\end{equation}
where $D$ is the unsigned incidence matrix of $\Gamma$ and $\mbf y$ is the vector in $\real^{|V_\Gamma|}$ whose entries are all $1/k$.
The equations in~\eqref{eq:Dby2} are consistent, since they admit the solution
\begin{align}
a_{v,i,j} &= 0 \quad \forall v\in V, i,j\in[k], j \neq i
\notag\\
a_{v,i,i} &= \frac{1}{k} \quad \forall v \in V, i\in[k],
\label{eq:sol2}
\end{align}
We observe a few easy facts about $\Gamma$. First,
\[
|V_\Gamma| = 2k|V|
\and
|E_\Gamma| = k^2|V|
\]
Also, $\Gamma$ has exactly $|V|$ connected components. More precisely, for each
$v\in V$, the set of all vertices of the form $w_{v,1,i}$ or $w_{v,2,j}$ induces
a connected component of $\Gamma$. Each of these components is isomorphic to the
complete bipartite graph $K_{k,k}$. In particular, $\Gamma$ has at least one
cycle (since $k\ge 3$). Since $\Gamma$ is bipartite, it is well known
(see~e.g.~Theorem~8.2.1 in~\cite{GR01}) that $D$ has rank $|V_\Gamma| -
|V|$, and therefore $\mathbb V=\Ker(D)$ has dimension
\[
r = |E_\Gamma| - |V_\Gamma| + |V| = (k^2-2k+1) |V| = (k-1)^2 |V|. 
\]

Now we calculate $\tau(\Gamma)$. Since each maximal forest in $\Gamma$ is
bijectively determined by selecting a spanning tree in each component, we
conclude that the number of maximal forests in $\Gamma$ is
\begin{equation} \label{eq:tau2}
\tau(\Gamma) = \tau(K_{k,k})^{|V|} = \left( k^{2k-2} \right)^{|V|}.
\end{equation}

Let
\[
K = \{ \mbf A \in\real^{|E_\Gamma|} : 0 \le a_{v,i,j} \le 1/k\} 
\and
K_1 = \left\{ \mbf A \in\real^{|E_\Gamma|} : \tfrac{0.9}{k^2} \le a_{v,i,j} \le
\tfrac{1.1}{k^2}\right\}.
\]
Clearly, $K$ is a compact convex set with non-empty interior $K^\circ$, and any
choice of $\mbf A \in S_1$ lies inside $K$. Let $\phi(\mbf A) = F(\mbf A)$,
which is continuous on $K$, and
\[
\psi(\mbf A) = 
\prod_{v\in V} \left(\prod_{i,j} \sqrt{a_{v,i,j}}\right)^{d-1},
\]
which is continuous and positive on $K_1$.
By Corollary~\ref{cor:exp_bound}, the maximum of $\phi(\mbf A)$ in $K$ subject
to~\eqref{eq:Dby2} is uniquely attained at $\mbf{\hat A} \in K_1 \subset
K^\circ$. Moreover, $\phi(\mbf A)$ is twice continuously differentiable in the
interior $K^\circ$. Set $\phi =
(d-1)G_1 - \frac{k^2(k-1)^2}{2}G_2$. Then
\[ \frac{\partial^2 G_1}{\partial a_{\nu,\ell,m} \partial a_{\nu',\ell',m'}} =
\begin{cases} \frac{1}{a_{\nu,\ell,m}} & \nu=\nu', \ell=\ell', m=m'\\ 0 &
\text{otherwise} \end{cases} \]
The second derivatives of $G_2$ are more difficult. First,
\[ \frac{\partial G_2}{\partial a_{\nu,\ell,m}} = \sum_{v' \text{ s.t. } \nu v' \in E} \left(
\frac{1}{\lambda} (\alpha_{\nu,\ell,m} + \alpha_{v',\ell,m}) +
\frac{1}{\lambda'} (\alpha_{\nu,\ell,m}-\alpha_{v',\ell,m})\right) \]

Then
\[ \frac{\partial^2 G_2}{\partial a_{\nu,\ell,m} \partial a_{\nu',\ell',m'}} =
\begin{cases} d(\frac{1}{\lambda}+\frac{1}{\lambda'}) & \nu = \nu', \ell=\ell',
m=m' \\ \frac{1}{\lambda} - \frac{1}{\lambda'} & \nu\nu' \in E, \ell=\ell', m=m'
\\ 0 & \ell \neq \ell' \text{ or } m \neq m'\end{cases} \]

Noting that
\[ \frac{1}{\lambda} + \frac{1}{\lambda'} = \frac{1}{(k-1)^2+1} +
\frac{1}{(k-1)^2-1} = \frac{2(k-1)^2}{((k-1)^2+1)((k-1)^2-1)} =
\frac{2(k-1)^2}{\lambda\lambda'}\]
and
\[ \frac{1}{\lambda} - \frac{1}{\lambda'} = \frac{1}{(k-1)^2+1} -
\frac{1}{(k-1)^2-1} = \frac{-2}{((k-1)^2+1)((k-1)^2-1)} =
\frac{-2}{\lambda\lambda'}\]
we get
\[ \frac{\partial^2 \phi}{\partial a_{\nu,\ell,m} \partial a_{\nu',\ell',m'}}
= \begin{cases}
\frac{d-1}{a_{\nu,\ell,m}} - d\frac{k^2(k-1)^4}{\lambda\lambda'} &
	\nu = \nu', \ell=\ell', m=m' \\
\frac{k^2(k-1)^2}{\lambda\lambda'} & \nu\nu' \in E, \ell=\ell', m=m' \\
0 & \ell \neq \ell' \text{ or } m \neq m'\end{cases}
\]

Hence, recalling that $A$ is the adjacency matrix of $G$, the Hessian matrix of $\phi$ at $\mbf A = \mbf{\hat A}$ is
\[ H = \left((d-1)k^2-d\frac{k^2(k-1)^4}{\lambda \lambda'}\right) I_{|V|}
\otimes I_{k^2} + \left(\frac{k^2(k-1)^2}{\lambda\lambda'}\right)A\otimes
I_{k^2}. \]

Consider the structure of $D$, the unsigned incidence matrix of $\Gamma$. As each component of $\Gamma$ is isomorphic to $K_{k,k}$, we may write $D = I_{|V|} \otimes \hat{D}$, where $\hat{D}$ is a $2k \times k^2$ incidence matrix for $K_{k,k}$. Note that as $\hat{D}$ is bipartite with one connected component, $\rank(\hat{D}) = 2k-1$ and so $\dim(\ker(D)) = k^2-(2k-1) = (k-1)^2$. Let $U$ be any $k^2 \times (k-1)^2$ matrix whose columns form a basis of $\ker(\hat{D})$. Then we claim the columns of $I_{|V|} \otimes U$ form a basis of $\mathbb{V}$. Every column of $I_{|V|} \otimes U$ is in the kernel of $D$ as the corresponding column of $U$ is in the kernel of $\hat{D}$. The columns of $U$ are linearly independent, as they form a basis, so the columns of $I_{|V|} \otimes U$ are as well. Finally, there are $|V|r = \dim(\mathbb{V})$ columns in $I_{|V|} \otimes U$. Then, making use of several facts about the Kronecker product (see, for example, \cite{Zha11} Theorem 4.5 and Problem 4.3.3a) we have

\begin{align*}
\det(-H|_{\mathbb V}) &= \frac{\det((I_{|V|}\otimes U)^T(-H)(I_{|V|}\otimes U))}{\det((I_{|V|}\otimes U)^T(I_{|V|}\otimes U))}\\
	&= \frac{\det\left(\left(-(d-1)k^2+d\frac{k^2(k-1)^4}{\lambda \lambda'}\right) I_{|V|}
\otimes U^TU - \left(\frac{k^2(k-1)^2}{\lambda\lambda'}\right)A\otimes
U^TU\right)}{\det(I_{|V|}\otimes U^TU))}\\
	&=\det\left(\left(-(d-1)k^2+d\frac{k^2(k-1)^4}{\lambda
\lambda'}\right) I_{|V|} -
\frac{k^2(k-1)^2}{\lambda\lambda'}A\right)^{(k-1)^2} \quad \times\\
	&\qquad\qquad \times \quad \frac{\det(U^TU)^{|V|}}{1^{(k-1)^2}\det(U^TU)^{|V|}}\\
	&= \left(\prod_{i=1}^{|V|} \left(-(d-1)k^2+d\frac{k^2(k-1)^4}{\lambda
\lambda'}\right) - \frac{k^2(k-1)^2}{\lambda\lambda'} \alpha_i \right)^{(k-1)^2}\\
	&= \left(\left(\frac{k^2}{\lambda\lambda'}\right)^{|V|} \prod_{i=1}^{|V|} (-\lambda\lambda'(d-1) + d(k-1)^4 - \alpha_i(k-1)^2\right)^{(k-1)^2}\\
	&= \left(\left(\frac{k^2}{\lambda\lambda'}\right)^{|V|} \prod_{i=1}^{|V|} (\lambda\lambda' + d - \alpha_i(k-1)^2\right)^{(k-1)^2}\\	
	&= (h(d,k))^{(k-1)^2}
\end{align*}
where $h(d,k)$ is as defined in \eqref{eq:det_hessian}.

Let
\[
\mathbb X_n = \left\{ \mbf A\in K \cap \frac{1}{n} \ent^{|E_\Gamma|} : D\mbf A = \mbf y \right\}.
\]
The solution described in~\eqref{eq:sol2} belongs to $K$ and, since $k\mid n$,
also to $\frac{1}{n} \ent^{|E_\Gamma|}$, so $\mathbb X_n$ is not empty. For each
$\mbf A\in \mathbb X_n$, let
\[
T_n(\mbf A) = P(\mbf A, n)e^{nF(\mbf A)}
\]
and
\[
c_n = \gamma(n,k)^{\frac{1}{2}|E|}(2\pi n)^{-\frac{1}{2}(k^2-1)|V| +
\frac{1}{2}(2k^2-1)|E|}.
\]
First, from~\eqref{eq:pAequi} and since $1\le \xi(x) = O(\sqrt x)$ as
$x\to\infty$, we can bound the polynomial factor $P(\mbf A, n)$ in each term by
\begin{equation}
P(\mbf A, n) = O\left( n^{|V|+2k^2|E|} \right),
\label{eq:boundp2}
\end{equation}
where the hidden constant in the $O()$ notation does not depend on $\mbf A$.

In view of~\eqref{eq:boundp2}, for $\mbf A\in \mathbb X_n$,
\[
 P(\mbf A, n) / c_n = O(n^{|V|+2k^2|E|}/c_n) = e^{o(n)}
\]
and combined with Corollary~\ref{cor:exp_bound} this gives
\[ T_n(\mbf A) = c_n(P(\mbf A, n)/c_n)e^{nF(\mbf A)} = O(c_ne^{n\phi(\mbf{\hat
A})+o(n)}). \]
Now note that we have chosen $c_n$ such that because $\xi(x)\sim\sqrt{2\pi x}$
as $x\to\infty$, and after a few computations, we have that for $\mbf A\in
\mathbb X_n \cap K_1$, \[
P(\mbf A, n) = c_n (\psi(\mbf A)+o(1)).
\]

Finally, as we've met all of the conditions of Proposition~\ref{prop:laplacian_summation_gamma},

\begin{align*}
\ex Y^2 &\sim \sum_{\mbf A \in \mathbb X_n} T_n(\mbf A) \\
	&\sim \frac{\psi(\mbf{\hat A})}{\tau(\Gamma)^{1/2} \det(-H|_{\mathbb
V})^{1/2}} (2\pi n)^{r/2} c_n e^{n\phi(\mbf{\hat A})}\\
	&= \frac{k^{-k^2(d-1)|V|}}{(k^{2k-2})^{\frac{1}{2}|V|}
h(d,k)^{\frac{(k-1)^2}{2}}} (2\pi n)^{\frac{1}{2}(k-1)^2|V|}
\gamma(n,k)^{\frac{1}{2} |E|} (2\pi n)^{-\frac{1}{2}(k^2-1)|V| +
\frac{1}{2}(2k^2-1)|E|} \quad \times \\
	& \qquad \qquad \times \quad \left(k^{|V|}\left(\frac{k-1}{k}\right)^{|E|}\right)^{2n}\\
	&= \frac{\gamma(n,k)^{\frac{1}{2}|E|}
k^{-2k^2|E|+(k^2-k+1)|V|}}{h(d,k)^{\frac{(k-1)^2}{2}}} (2\pi n)^{-(k-1)|V| +
\frac{1}{2}(2k^2-1)|E|} \left(k^{|V|}\left(\frac{k-1}{k}\right)^{|E|}\right)^{2n}\\
	&= \frac{k^{(k^2-k+1)|V|-\frac{1}{2}(k^2-1)|E|} (k-1)^{(2k^2-2k)|E|}
}{\lambda^{\frac{1}{2}(k-1)^2|E|} (k-2)^{\frac{1}{2}(k^2-1)|E|}
h(d,k)^{\frac{(k-1)^2}{2}}} (2\pi n)^{-(k-1)|V|} 
\left(k^{|V|}\left(\frac{k-1}{k}\right)^{|E|}\right)^{2n}
\end{align*}
This completes the proof of Proposition~\ref{prop:expectation_y_squared}.

%%%
% Joint Moments
%%%
\section{Joint Moments}
\label{sec:joint}
Recall that $Y$ counts the number of strongly equitable $k$-colourings of a random $n$-lift of $G$. In this section we allow $G$ to be any $d$-regular graph. Recall that for fixed $j \ge 3$, we denote the number of $j$-cycles in a random lift by $Z_j$. In this section, we estimate the expected value of the joint moment $YZ_j$ by proving the following proposition:

\begin{proposition}\label{prop:single_joint_moment}
If $Y$ is the number of strongly equitable $k$-colourings of a random $n$-lift $L$ of $G$ and, for $j \ge 3$, $Z_j$ counts the number of $j$-cycles in $L$, then
\[
\frac{\ex (YZ_j)}{\ex Y} \sim \lambda_j(1+\delta_j)
\]
where $\lambda_j$ and $\delta_j$ are as defined in \eqref{eq:sscm_constants}.
\end{proposition}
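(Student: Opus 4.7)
\medskip\noindent
\textbf{Plan of proof.}
The plan is to enumerate triples $(L,\phi,C)$ where $L$ is an $n$-lift of $G$, $\phi$ is a strongly equitable $k$-colouring of $L$, and $C$ is a $j$-cycle in $L$, and compare the count with the one giving $\ex Y$. Since $\Pi\colon L\to G$ is a local isomorphism, every $j$-cycle $C$ in $L$ projects to a \emph{closed non-backtracking walk} $W=(v_0,v_1,\ldots,v_{j-1},v_0)$ of length $j$ in $G$. For each fixed closed NBW $W$ we will first count the $j$-cycles $C$ in a random $L$ that project to $W$, and then count the colourings $\phi$ of $L$ that are compatible with $C$. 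The idea is that, once we expose the $j$ edges of $C$, the rest of $L$ behaves like a random lift of a slightly modified base graph, and the sum over $\phi$ factors as a ``cycle'' piece times (essentially) $\ex Y$.

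\medskip\noindent
First, to enumerate closed NBWs in $G$, we invoke the algebraic tools from Friedman~\cite{Fri08} (cf.\ also \cite{GJR10}). Let $B$ be the non-backtracking edge operator of $G$; by the Ihara--Bass formula
\[
\det(xI-B) = (x^2-1)^{|E|-|V|} \prod_{i=1}^{|V|}\bigl(x^2-\alpha_i x+d-1\bigr),
\]
so the eigenvalues of $B$ are $\pm 1$ (each with multiplicity $|E|-|V|$) together with the pairs $\beta_i^{\pm}$ from~\eqref{eq:beta_def}. Since $\tr(B^j)$ counts closed NBWs of length $j$ with a distinguished starting directed edge, the number of closed NBWs of length $j$ in $G$ is $(|E|-|V|)(1+(-1)^j)+\sum_{i=1}^{|V|}((\beta_i^+)^j+(\beta_i^-)^j)$, and each $j$-cycle in $L$ corresponds to $2j$ such closed NBWs (one per starting vertex and direction). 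A routine enumeration shows that given a closed NBW $W$ and a choice of one vertex in each fiber $\Pi^{-1}(v_0),\ldots,\Pi^{-1}(v_{j-1})$, the probability that the corresponding $j$ matching edges all lie in $L$ is asymptotically $n^{-j}$, so the expected number of $j$-cycles in $L_n(G)$ projecting to $W$ is asymptotically $1/(2j)$ per NBW. Summing over all NBWs gives $\ex Z_j\sim\lambda_j$.

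\medskip\noindent
For the ``local'' correction, fix a $j$-cycle $C$ in $L$ projecting to $W$. Conditioning on $C$ being present amounts to imposing $j$ specific matching edges in the random perfect matchings associated with the edges $v_0v_1,\ldots,v_{j-1}v_0$ of $W$. Repeating the counting argument of Section~\ref{ssec:equi_expectation} for $\ex Y$ with these edges removed, one obtains the same optimization over $\mbf b$ with the same maximizer $\mbf{\hat b}$, and $\ex(Y\mid C)$ differs from $\ex Y$ only by a local factor coming from the $j$ prescribed edges. In the asymptotically-dominant term where all fibers are coloured equitably, the $j$ edges of $C$ contribute to $\ex(Y\mid C)$ a factor equal to the number of proper $k$-colourings of $C_j$ with prescribed colour distribution $\mbf{\hat a}$ on each fiber, while their contribution to $\ex Y$ was that of $j$ independent edges, i.e.\ $(k-1)^j/k^j$. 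Using the chromatic polynomial $P(C_j,k)=(k-1)^j+(-1)^j(k-1)$ and normalizing, this ratio simplifies to
\[
\frac{\ex(Y\mid C)}{\ex Y}\;\sim\;\frac{(k-1)^j+(-1)^j(k-1)}{(k-1)^j}\;=\;1+\frac{(-1)^j}{(k-1)^{j-1}}\;=\;1+\delta_j.
\]
Combining this with the expected-cycle-count computation yields
\[
\frac{\ex(YZ_j)}{\ex Y}\;=\;\sum_{W}\sum_{C\to W}\pr(C\subset L)\cdot\frac{\ex(Y\mid C)}{\ex Y}\;\sim\;\lambda_j\,(1+\delta_j),
\]
as required. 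The main obstacle is the rigorous justification in the last paragraph that the constrained optimization (with $C$ forced into $L$) has the same maximizer $(\mbf{\hat a},\mbf{\hat b})$ and that all correction terms beyond the ``cycle factor'' are absorbed into the $(1+o(1))$ of~$\ex Y$; this requires re-running the Laplace-summation machinery of Section~\ref{ssec:laplace_summation} with the mild perturbation coming from the $j$ prescribed matching edges, and checking that the Hessian, the lattice determinant, and the polynomial pre-factors are stable under removing $O(1)$ edges.
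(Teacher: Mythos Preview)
Your proposal is correct and follows essentially the same approach as the paper: decompose by the closed non-backtracking walk $W$ in $G$ to which the $j$-cycle projects, count such walks spectrally (the paper quotes Friedman's lemma, you invoke Ihara--Bass, which is equivalent), use the chromatic polynomial $P(C_j,k)=(k-1)^j+(-1)^j(k-1)$ for the cycle, and argue that the remaining Laplace summation is an $O(1)$-perturbation of the one for $\ex Y$. Your framing via $\ex(Y\mid C)/\ex Y$ is equivalent to the paper's direct enumeration of triples $(\text{lift},\text{cycle},\text{colouring})$; the one imprecise sentence is the claim that ``their contribution to $\ex Y$ was that of $j$ independent edges, i.e.\ $(k-1)^j/k^j$'' --- more accurately, the extra factor at the maximizer is $k^j\prod_t \hat b_t=(k-1)^{-j}$, which after summing over the $(k-1)^j+(-1)^j(k-1)$ proper colourings of the cycle gives exactly $1+\delta_j$, as you state.
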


In order to prove Proposition~\ref{prop:single_joint_moment} we require two lemmas. The first is a lemma of Friedman, originally proved in \cite{Fri08}, as presented in \cite{GJR10}:

\begin{lemma}\label{lem:place_j_cycle}
Suppose that $G$ is $d$-regular with $d \ge 3$ and let $\alpha_1, \ldots, \alpha_{|V|}$ be the eigenvalues of the adjacency matrix of $G$. For $i=1,\ldots, |V|$, let ${\beta_i}^+$ and ${\beta_i}^-$ denote the roots of the quadratic $x^2-\alpha_ix+d-1 = 0$. That is,
\[ {\beta_i}^+ = \tfrac{1}{2}\alpha_i + \sqrt{\tfrac{1}{4}\alpha_i^2-(d-1)}, \quad \beta_i^- = \tfrac{1}{2}\alpha_i - \sqrt{\tfrac{1}{4}\alpha_i^2-(d-1)}.\]
Then the number of non-backtracking closed $j$-walks in $G$ is given by
\begin{align*}
c_j &:= \frac{1}{2}|V|(d-2)(1+(-1)^j)+\sum_{i=1}^{|V|}((\beta_i^+)^j+(\beta_i^-)^j)\\
	&= (|E|-|V|)(1+(-1)^j)+\sum_{i=1}^{|V|}((\beta_i^+)^j+(\beta_i^-)^j)
\end{align*}
where we note that in a $d$-regular graph we have
\[ 2|E| = \sum_{v \in V} d(v) = d|V| \]
and thus
\[ \frac{1}{2}|V|(d-2) = \frac{1}{2}(d|V|-2|V|) = \frac{1}{2}(2|E|-2|V|) = |E|-|V|. \]
\end{lemma}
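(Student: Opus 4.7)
The plan is to reduce the count of non-backtracking closed $j$-walks to the trace of the $j$-th power of the so-called \emph{non-backtracking edge matrix} (or Hashimoto matrix) of $G$, and then to read off its spectrum from the Ihara--Bass determinant identity for $d$-regular graphs.

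First, I set up the edge matrix. Let $\vec E$ denote the set of oriented edges of $G$, so $|\vec E|=2|E|$. Define the $|\vec E|\times|\vec E|$ matrix $B$ by
\[
B_{(u,v),(x,y)} \;=\; \begin{cases} 1 & \text{if } v=x \text{ and } y\neq u,\\ 0 & \text{otherwise.}\end{cases}
\]
By construction, a non-backtracking closed $j$-walk in $G$ is exactly a sequence $(e_1,\ldots,e_j)$ of oriented edges with $B_{e_i,e_{i+1}}=1$ for $1\le i\le j-1$ and $B_{e_j,e_1}=1$; the number of such sequences equals $\tr(B^j)$. So $c_j=\tr(B^j)$, and it remains to determine the eigenvalues of $B$.

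Next, I invoke the Ihara--Bass formula: for any $d$-regular graph $G$ with adjacency matrix $A$,
\[
\det(I-uB) \;=\; (1-u^2)^{|E|-|V|}\,\det\!\bigl((1+(d-1)u^2)\,I - uA\bigr).
\]
I would include a brief self-contained proof (or cite Friedman~\cite{Fri08}) based on partitioning $B$ in block form with respect to the incidence structure of $\vec E$ and using a Schur-complement computation; this is the one nontrivial algebraic ingredient. Given the formula, if $\mu$ is a nonzero eigenvalue of $B$, then $u=\mu^{-1}$ is a root of the right-hand side, which factors as
\[
\prod_{i=1}^{|V|}\bigl(1+(d-1)u^2-u\alpha_i\bigr)\cdot (1-u^2)^{|E|-|V|}.
\]
The first factor contributes the roots of $\mu^2-\alpha_i\mu+(d-1)=0$, i.e.\ $\beta_i^+$ and $\beta_i^-$ as in~\eqref{eq:beta_def}; the second contributes the values $\mu=\pm1$, each with multiplicity $|E|-|V|$. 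This accounts for all $2|E|$ eigenvalues of $B$ (counted with algebraic multiplicity).

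Finally, summing $j$-th powers of eigenvalues,
\[
c_j \;=\; \tr(B^j) \;=\; (|E|-|V|)\bigl(1^j+(-1)^j\bigr) \;+\; \sum_{i=1}^{|V|}\bigl((\beta_i^+)^j+(\beta_i^-)^j\bigr),
\]
which is the asserted formula. The equivalent expression $\tfrac{1}{2}|V|(d-2)(1+(-1)^j)$ for the first term follows from $2|E|=d|V|$. The main obstacle in carrying this out rigorously is the Ihara--Bass identity itself; everything else is a clean combinatorial/linear-algebraic translation.
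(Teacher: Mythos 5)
Your sketch is correct. Note that the paper does not prove this lemma at all: it is quoted as a result of Friedman~\cite{Fri08}, in the form presented in~\cite{GJR10}, so there is no internal proof to compare against. Your route via the Hashimoto (non-backtracking edge) matrix $B$ and the Ihara--Bass identity is the standard derivation and is essentially how the cited sources obtain the formula. The identification $c_j=\tr(B^j)$ correctly captures the cyclically (tailless) non-backtracking closed walks, which is the notion needed later for enumerating cycle types of $j$-cycles in the lift, and the degree count on both sides of the Ihara--Bass determinant (both sides have degree $2|E|$ in $u$, with nonzero leading coefficient on the right since $d\ge 3$) confirms that the eigenvalues $\pm1$, each with multiplicity $|E|-|V|$, together with the $2|V|$ roots $\beta_i^{\pm}$ exhaust the spectrum of $B$ with no zero eigenvalues unaccounted for. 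The one substantive ingredient you leave as a citation --- the Ihara--Bass formula itself --- is precisely the content the paper delegates to~\cite{Fri08}, so your sketch is at least as complete as the paper's treatment; filling in the block/Schur-complement proof of that identity would make it fully self-contained.
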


The second lemma considers properly colouring cycles.

\begin{lemma}\label{lem:colour_j_cycle}
The number of ways to properly $k$-colour a rooted, directed cycle of length $j$ is
\[ (k-1)^j + (k-1)(-1)^j. \]
\end{lemma}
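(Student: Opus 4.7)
The plan is to apply the transfer matrix method. A proper $k$-colouring of a rooted, directed $j$-cycle with vertex sequence $v_1\to v_2\to\cdots\to v_j\to v_1$ corresponds bijectively to a tuple $(c_1,\ldots,c_j)\in[k]^j$ satisfying $c_i\ne c_{i+1}$ for every $i$, where indices are read modulo $j$ so that $c_{j+1}:=c_1$. Introducing the $k\times k$ transfer matrix $T=J_k-I_k$ (with $J_k$ the all-ones matrix and $I_k$ the identity), so that $T_{a,b}=1$ if $a\ne b$ and $T_{a,b}=0$ if $a=b$, the number of proper colourings can be written as
\[
\sum_{(c_1,\ldots,c_j)\in[k]^j}\ \prod_{i=1}^{j} T_{c_i,c_{i+1}} \ =\ \sum_{c\in[k]} (T^j)_{c,c} \ =\ \tr(T^j).
\]

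Next I would compute the spectrum of $T$ in order to evaluate $\tr(T^j)$. The matrix $J_k$ has eigenvalue $k$ with eigenvector $\mathbf{1}=(1,\ldots,1)^\top$, and eigenvalue $0$ with multiplicity $k-1$ on the subspace orthogonal to $\mathbf{1}$. Subtracting $I_k$ shifts every eigenvalue by $-1$, so $T=J_k-I_k$ has eigenvalue $k-1$ with multiplicity $1$ and eigenvalue $-1$ with multiplicity $k-1$. Taking the $j$-th power and then the trace,
\[
\tr(T^j) \ =\ (k-1)^j + (k-1)\cdot(-1)^j,
\]
which is precisely the formula claimed by the lemma.

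There is no real obstacle here: the only substantive step is diagonalising the rank-one perturbation $J_k-I_k$ of the identity, which is a standard exercise in linear algebra. One could alternatively argue via the deletion–contraction recursion for the chromatic polynomial of $C_j$, but the spectral proof above is both shorter and more transparent, so that is the route I would take.
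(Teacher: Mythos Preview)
Your proof is correct and is essentially identical to the paper's: the paper identifies proper $k$-colourings of the rooted directed $j$-cycle with closed $j$-walks in $K_k$, counts these as $\tr(A^j)$ for the adjacency matrix $A=J_k-I_k$, and evaluates the trace via the eigenvalues $k-1$ (simple) and $-1$ (multiplicity $k-1$). The only difference is terminology (``transfer matrix'' versus ``adjacency matrix of $K_k$''), not substance.
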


\begin{proof}
Assign a unique color to each vertex in $K_k$. Each proper $k$-coloring of the
$j$-cycle corresponds to a closed walk of length $j$ in $K_k$: starting with the
color of the root, travel in the direction of the cycle to the vertex
corresponding to the next color. The total number of closed $j$-walks in $K_k$
is $\tr(A^j)$, where $A = J_k - I_k$ is the adjacency matrix of $K_k$. As $A$
has eigenvalues $(k-1)$ with multiplicity one and $-1$ with multiplicity $k-1$,
the number of directed, rooted $j$-walks is $(k-1)^j + (k-1)(-1)^j$.
\end{proof}

We are now ready to prove Proposition~\ref{prop:single_joint_moment}.

\begin{proof}[Proof of Propositon~\ref{prop:single_joint_moment}]
We estimate $\ex(YZ_j)$ by counting ordered triples (lift, cycle, colouring), where the lift contains the $j$-cycle and is properly coloured by the colouring, and then dividing by the total number of lifts.

First, it will be convenient to consider rooted, ordered cycles. Each cycle of length $j$ contains $2j$ rooted, oriented cycles, so we simply correct our answer by a factor of $2j$.

For any rooted, oriented cycle $C = (c_1, \ldots, c_j)$, we define the \emph{cycle type} of $C$ to be the sequence $(v_1, \ldots, v_j)$ such that $c_i \in \Pi^{-1}(v_i)$; that is, the cycle type is the sequence of fibers containing the vertices of the cycle. Note that every cycle type corresponds to a closed, non-backtracking walk in $G$. Thus our first step in counting ordered triples is to choose a closed, non-backtracking walk $w$ to determine the cycle type of our cycle.

Next we fix a colouring, $Q$, of the cycle, after which we choose the vertices in the lift to realize the cycle of the appropriate cycle type and colour them according to $Q$. If the cycle type revisits a fiber, there may be fewer than $n$ options for each vertex. However, as the cycle has fixed length $j$ and $n$ goes to infinity, there are $(1-o(1))n$ choices for each vertex, for a total contribution of $(1-o(1))n^j$.

Having specified $w, Q$ and the vertices of the cycle, we now follow a process very similar to that of Section~\ref{ssec:opti1} resulting in \eqref{eq:EX_pre_stirling}. For each $e = vv' \in E$ we define $b^\ast_e = (b^\ast_{e,i,i'})_{i,i' \in [k], i\ne i'}$ where $b^\ast_{e,i,i'}$ denotes the proportion of edges in $\Pi^{-1}(e)$ that connect a vertex of colour $i$ in $\Pi^{-1}(v)$ to a vertex of colour $i'$ in $\Pi^{-1}(v')$, excluding edges already prescribed by $w$ and $Q$. We set $\mbf{b^\ast} = (b^\ast_e)_{e \in E}$. The entries of each $b^\ast_e$ must be in $\frac{1}{n}\ent$ and satisfy
\begin{align}
b^\ast_{e,i,i'} &\ge 0 \quad \forall e\in E, i,i'\in[k], i\ne i'
\notag
\\
\sum_{i'\ne i} b^\ast_{e,i,i'} &= a(v,i,w,Q) \quad \forall e=vv'\in E, i\in [k]
\notag
\\
\sum_{i\ne i'} b^\ast_{e,i,i'} &= a(v',i,w,Q) \quad \forall e=vv'\in E, i'\in [k].
\label{eq:jointcons}
\end{align}
where $a(v,i,w,Q)$ is the proportion of vertices in $\Pi^{-1}(v)$ that still need to receive colour $i$, after accounting for the vertices already coloured by $Q$ according to $w$, so that each colour is assigned to $\frac{1}{k}$ of the vertices in $\Pi^{-1}(v)$ (as $Y$ is a strongly equitable colouring). Specifically, if $\ell_i$ vertices in $\Pi^{-1}(v)$ have already been assigned colour $i$ by $w$ and $Q$, then
\[ a(v,i,w,Q) = \frac{1}{k} - \frac{\ell_i}{n}. \]

For each vertex $v$ in $G$, we must colour the uncoloured vertices in the fiber $\Pi^{-1}(v)$. Define $\epsilon(v, w)$ to be the number number of times $v$ is encountered in $w$, and let
\[
a^\ast_v = (a^\ast_{v,i})_{i \in [k]} = (a(v,i,w,Q))_{i \in [k]}.
\]
Then the number of ways to colour the vertices is
\[ \prod_{v \in V} \binom{n-\epsilon(v,w)}{a^\ast_vn}. \]
We then decide, for every $e = vv' \in E$ and distinct colours $i,i' \in [k]$, which sets of $b_{e,i,i'}n$ in $\Pi^{-1}(v)$ and $\Pi^{-1}(v')$ will be matched. For each $e = vv'$, define $a^{\ast\ast}_{e,v} = (a^{\ast\ast}_{e,v,i})_{i \in [k]}$ to be the sequence of vertices in $\Pi^{-1}(v)$ that have been assigned colour $i$ and have not already been matched to a vertex in $\Pi^{-1}(v')$ by $w$. Then the number of ways to choose sets of vertices to be matched is
\[
\proda{e\in E\\e=vv'} \bigg( \prod_{i\in[k]} \frac{(a^{\ast\ast}_{e,v,i} n)!}{ \prod_{i'\ne i} (b^\ast_{e,i,i'} n)!} \bigg) \bigg( \prod_{i'\in[k]} \frac{(a^{\ast\ast}_{e,v',i'} n)!}{ \prod_{i\ne i'} (b^\ast_{e,i,i'} n)!} \bigg)
= \proda{e\in E\\e=vv'} \frac{(a^{\ast\ast}_{e,v} n)!}{(b^\ast_{e} n)!} \frac{(a^{\ast\ast}_{e,v'} n)!}{(b^\ast_{e} n)!}
\]
Finally, we need to choose a perfect matching between these sets, which can be done in
\[
\proda{e\in E}  \proda{i,i'\in[k]\\ i\ne i'}  (b^\ast_{e,i,i'} n)! = \proda{e\in E}  (b^\ast_e n)! 
\]
different ways. Putting everything together, we get
\begin{align}
\ex(YZ_j) &= \frac{1}{2j}\sum_w \sum_Q (1+o(1))n^j \frac{1}{n!^{|E|}} \prod_{v \in V} \binom{n-\epsilon(v,w)}{a^\ast_vn}  \quad \times \notag\\
	& \qquad \qquad \times \quad \sum_{\mbf{b^\ast} \text{ s.t. } \eqref{eq:jointcons}} \proda{e \in E \\ e=vv'} \frac{(a^{\ast\ast}_{e,v} n)!(a^{\ast\ast}_{e,v'} n)!}{(b^\ast_{e} n)!} \label{eq:unsimplified_joint}
\end{align}
We now seek estimates for the equation above. First, note that for any walk $w$,
\[
\sum_{v \in V} \epsilon(v,w) = j
\]
as the walk is of length $j$. Similarly, across all $v \in V$, there are exactly $j$ vertices already coloured with some colour. As $Y$ is a strongly equitable colouring, we have
\begin{equation}\label{eq:critical_term_1}
\prod_{v \in V} \binom{n-\epsilon(v,w)}{a^\ast_vn} \sim \frac{\left(\frac{n}{k}\right)^j}{n^j} \cdot \frac{(n!)^{|V|}}{(\left(\frac{n}{k}\right)!)^{|V|}}.
\end{equation}
Furthermore, for any choice of $\mbf{b^\ast}$, the product over all edges of the $a^{\ast\ast}_{e,v}$ encounters all $j$ edges of $w$. For $t = 1, \ldots, j$, set $b_t = b_{e,i,i'}$ where $e$ is the edge that connects vertices $t$ and $t+1$ of $w$ and $Q$ colours those vertices with colours $i$ and $i'$, respectively, where the $b_{e,i,i'}$ are as defined in \eqref{eq:bcons1equi} (as opposed to the $b^\ast_{e,i,i'}$ defined in \eqref{eq:jointcons}). Then
\begin{equation} \label{eq:critical_term_2}
\proda{e \in E \\ e=vv'} \frac{(a^{\ast\ast}_{e,v} n)!(a^{\ast\ast}_{e,v'} n)!}{(b^\ast_{e} n)!} \sim \prod_{t=1}^j \frac{b_tn}{(\frac{n}{k})^2} \proda{e \in E \\ e=vv'} \frac{((\frac{n}{k})!)^2}{(b_{e} n)!}.
\end{equation}
Putting these facts together allows us to rewrite \eqref{eq:unsimplified_joint}
in terms of $p(\mbf{\hat{a}}, \mbf b, n)$ and $f(\mbf a, \mbf b)$ as defined in
\eqref{eq:pabequi} and \eqref{eq:fabequi}, respectively:
\begin{equation}\label{eq:simplified_joint_moment}
\ex(YZ_j) \sim \frac{1}{2j}\sum_w \sum_Q k^j \sum_{\mbf{b} \text{ s.t. } \eqref{eq:bcons1equi}} \left(\prod_{t=1}^j b_t\right) p(\mbf{\hat{a}}, \mbf b, n) e^{nf(\mbf a, \mbf b)}.
\end{equation}
Note that we may bound
\[ \prod_{t=1}^j b_tn \le 1 \]
and that, furthermore, when $\mbf{b} = \mbf{\hat{b}}$, none of the $b_t$ vanish.
Therefore, Proposition~\ref{prop:lower_bound_optimization_G_equitable} holds. Though we omit the details, we may thus use Proposition~\ref{prop:laplacian_summation_gamma} almost identically as in Section~\ref{ssec:equi_expectation} to get
\[ \ex(YZ_j) \sim \frac{1}{2j}\sum_w \sum_Q \left(\frac{n}{k}\right)^{-j} \left(\frac{n}{k(k-1)}\right)^j \ex(Y). \]
Finally, as the terms no longer depend on $w$ or $Q$, we apply Lemmas~\ref{lem:place_j_cycle} and \ref{lem:colour_j_cycle} to get
\begin{align*}
\ex(YZ_j) &= \frac{1}{2j} \cdot c_j \cdot ((k-1)^j+(k-1)(-1)^j) \cdot \frac{1}{(k-1)^j} \ex(Y)\\
	&= \frac{c_j}{2j} \left(1+\frac{(-1)^j}{(k-1)^{j-1}}\right) \ex(Y)\\
	&= \lambda_j(1+\delta_j) \ex(Y)
\end{align*}
Dividing by $\ex(Y)$ completes the proof.
\end{proof}

In order to apply the small subgraph conditioning method, we need estimations for higher moments. The argument for Proposition~\ref{prop:all_joint_moments} is just an extension of the proof of Proposition~\ref{prop:single_joint_moment}. We count ordered tuples (lift, cycle, ..., cycle, colouring) so that the lift is properly coloured by the colouring and contains cycles of the specified length. We claim the contribution from cases where the cycles intersect turn out to be negligible, adapting an argument of \cite{KPW10}. Suppose that the cycles form a subgraph $H$ with $\nu$ vertices and $\mu$ edges. If the cycles are disjoint, then $\nu = \mu$. If they overlap, then as the minimum degree in $H$ is at least two and some vertex has degree at least three, we have $\nu < \mu$. We then follow the same argument as in the proof above with the following changes. When choosing vertices for the cycle, we have $(1+o(1))n^\nu$ choices. In \eqref{eq:critical_term_1}, the coefficient is $(\frac{n}{k})^{\nu} n^{-\nu}$. Then in \eqref{eq:critical_term_2}, the product ranges from 1 to $\mu$. Thus unlike in the proposition, where by \eqref{eq:simplified_joint_moment} all of the $n$'s have cancelled, we get a $\Theta(n^{\nu-\mu})$ term. In the disjoint case the $n$'s do cancel, but as there are finitely many isomorphism types of $H$, the contribution from terms with overlapping cycles are on the order $\frac{1}{n}$ times the rest. Finally, the disjoint terms decompose into a product of factors corresponding to the individual cycles, giving the desired result.

%%%
% ... and when $n$ is not divisible by $k$
%%%
\section{The case where $n$ is not divisible by $k$} \label{sec:extension}

Thus far all arguments have assumed that $n$ is divisible by $k$ so that strongly equitable colourings exist. We now consider the case where $n = qk + r$ for some integer $r \in [1,k-1]$. We start by expanding the definition of equitable colourings.

Let $n = qk+r$ for $r \in [0,k-1]$. A $k$-colouring of an $n$-lift $L$ of a graph $G$ is strongly equitable if for every vertex $v \in G$, the fiber $f^{-1}(v)$ contains $q+1$ vertices of colour $1,\ldots, r$ and $q$ vertices of colour $r+1, \ldots, k$. Note that our previous definition of strongly equitable is exactly the case when $r = 0$.

Recall from Section~\ref{ssec:outline} that if $Y$ counts the number of strongly equitable $k$-colourings of a random lift $L$ of a $d$-regular graph $G$ then
\[ \ex Y^2 =\sum_{\mbf A,\mbf B} p(n, \mbf A, \mbf B) \exp\big( n f(\mbf A, \mbf B)\big) \]
where $p$ is some function bounded by a polynomial in $n$ and
\[
f(\mbf A, \mbf B) = -\sum_{v\in V}\sum_{i,j}a_{v,i,j}\log a_{v,i,j}
+\sum_{e\in E}\sum_{i,j,i',j'} b_{e,i,j,i',j'}\log\left( \frac{a_{v,i,j}a_{v',i',j'}}{b_{e,i,j,i',j'}}\right) 
\]

The next proposition describes the effect of adding a small constant $r$ vertices to each fiber.

\begin{proposition}\label{prop:second_moment_non_divisible}
Suppose $n = qk+r$ for some integer $r \in [1,k-1]$. Let $Y_n$ be the number of strongly equitable $k$-colourings of a random $n$-lift of a given $d$-regular graph $G$. Let $n' = n-r$ and let $Y_{n'}$ count the number of strongly equitable $k$-colourings of a random $n'$-lift of $G$. Then
\[ \ex Y_n^2 \sim \left(k^{|V|}\left(\frac{k-1}{k}\right)^{|E|}\right)^{2r} \ex Y_{n'}^2. \]
\end{proposition}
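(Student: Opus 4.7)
The plan is to replicate the entire second-moment analysis of Section~\ref{sec:second} for the non-divisible case, verifying that the optimization and Laplace-method steps produce the same constants, and that the required $\Phi^{2r}$ factor (where $\Phi := k^{|V|}((k-1)/k)^{|E|}$) arises purely from the enlarged exponent $n$ versus $n'$. Throughout, let $\mu_i := (q + \mathbf{1}_{\{i \le r\}})/n$, so that $\sum_i \mu_i = 1$ and $\mu_i = 1/k + O(1/n)$. The counting argument of Section~\ref{ssec:outline} transfers verbatim, except that the marginal constraints \eqref{eq:acons2} become $\sum_j a_{v,i,j} = \mu_i$ and $\sum_i a_{v,i,j} = \mu_j$ (with \eqref{eq:abcons2} modified analogously). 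This yields $\ex Y_n^2 = \sum_{\mbf A, \mbf B} \poly(n) \exp(n f(\mbf A, \mbf B))$ with $f$ as in \eqref{eq:fAB} over the new constraint polytope.

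Next I would redo the optimization of Proposition~\ref{prop:exp_bound}. The natural candidate maximizer is the rank-one product $a^*_{v,i,j} = \mu_i \mu_j$, together with $b^*_{e,i,j,i',j'}$ proportional on the index set from Section~\ref{ssec:outline} to $a^*_{v,i,j} a^*_{v',i',j'}$. Following the structure of the proof of Proposition~\ref{prop:exp_bound}, one first optimizes over $\mbf B$ via Gibbs' inequality (yielding the same reduction to $g(\mbf A)$), and then bounds $g(\mbf A)$ by applying Theorem~\ref{thm:achlioptas-naor} to a diagonally rescaled matrix $\tilde A_v$ whose row and column sums are exactly $1/k$; the rescaling introduces only a $1 + O(1/n)$ error, absorbed into the polynomial prefactor. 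An elementary calculation then shows $f(\mbf A^*, \mbf B^*) = 2 \log \Phi + O(1/n)$, so $\exp(n f(\mbf A^*, \mbf B^*)) = \Phi^{2n}(1 + o(1))$ --- the same per-vertex rate as in the divisible case, but the larger value of $n$ produces the exponential factor $\Phi^{2n}/\Phi^{2n'} = \Phi^{2r}$.

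The saddle-point analysis of Section~\ref{ssec:inner} and the Laplace-method application of Section~\ref{ssec:outer} transfer essentially unchanged. The incidence graph $\Gamma$, its maximal-forest count $\tau(\Gamma)$, the lattice rank $(k-1)^2 |V|$, and the Hessian determinant $h(d,k)$ are all determined by $G$ and $k$ alone; the Hessian of $\phi$ at $\mbf A^*$ differs from that at $\mbf{\hat A}$ by $O(1/n)$ entrywise, preserving leading-order asymptotics, and recentering the change of variables of Section~\ref{ssec:inner} around $\mbf A^*$ rather than $\mbf{\hat A}$ does not affect the Gaussian integrals at leading order. Thus the polynomial prefactor for $\ex Y_n^2$ agrees asymptotically with that of $\ex Y_{n'}^2$, and combining with the $\Phi^{2r}$ exponential factor yields the proposition.

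The main obstacle is verifying the optimization step rigorously: Theorem~\ref{thm:achlioptas-naor} applies only to doubly-stochastic matrices, so one must either adapt the inequality to matrices with slightly non-uniform marginals (in the spirit of Proposition~\ref{prop:achlioptas-naor_corollary}) or show that the diagonal rescaling trick above preserves uniqueness of the maximizer at the required asymptotic precision. Secondary bookkeeping is needed to ensure that $\gamma(n,k)$, $\det \mathcal{L}$, and $h(d,k)$ do not pick up $(1 + c/n)$ corrections that would perturb the leading constant.
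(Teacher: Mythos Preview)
Your approach differs substantially from the paper's. Rather than redoing the second-moment analysis for the perturbed marginals, the paper constructs a direct term-by-term comparison: for each pair $(\mbf A, \mbf B)$ in the concentrated region $\mathbb Y_n(1)$ (from Proposition~\ref{prop:central_sum}), it defines an explicit shift $(\mbf A^*, \mbf B^*)$ obtained by subtracting one vertex of each of the first $r$ colours from every fiber (and matching edges accordingly), yielding a valid overlap pair for the $n'$-lift. A direct ratio computation shows $\ell(\mbf A, \mbf B)/\ell(\mbf A^*, \mbf B^*) \to \Phi^{2r}$ uniformly on $\mathbb Y_n(1)$, and a sandwich argument (showing the image of the shift contains $\mathbb Y_{n'}(\gamma)$ for some $\gamma>0$) together with Proposition~\ref{prop:central_sum} finishes. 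This bypasses your main obstacle entirely: no new optimization is needed, since the paper never claims a maximizer for the perturbed problem, only that the concentrated region of the $n$-sum maps into and covers the concentrated region of the $n'$-sum.

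Your route is viable in principle but has a genuine gap as written. You claim $f(\mbf A^*, \mbf B^*) = 2\log\Phi + O(1/n)$ and then deduce $\exp(n f) = \Phi^{2n}(1+o(1))$; this does not follow, since $n \cdot O(1/n) = O(1)$ would leave an uncontrolled constant factor. In fact the correction is $O(1/n^2)$ (the first-order terms cancel because $\sum_i(\mu_i - 1/k) = 0$ and the entropy and $\log z_e$ are stationary at the uniform point), which you must compute and state explicitly for the argument to close. Beyond that, your plan to rescale $kA_v$ to a genuinely doubly-stochastic matrix and invoke Theorem~\ref{thm:achlioptas-naor} up to $O(1/n)$ error is plausible, but you would also need to argue that this perturbation preserves \emph{uniqueness} of the maximizer (not just the value), since the Laplace step requires a strict interior maximum; the paper's comparison argument avoids this altogether.
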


\begin{proof}
First assume $r > 1$. Recall the definition of $\mathbb{Y}_n(\gamma)$ given in Proposition~\ref{prop:central_sum}. Given $\mbf A, \mbf B \in \mathbb{Y}_n(1)$, we define:
\[ \mbf A^\ast = \left( \frac{1}{n'} (na_{v,i,j} - \chi_{i,j}) \right)_{v \in V, i,j \in [k]} \and \mbf B^\ast = \left( \frac{1}{n'} (nb_{e,i,j,i',j'} - \chi_{i,j,i',j'}) \right)_{e\in E, (i,j,i',j') \in K} \]
where
\[ \chi_{i,j} = \begin{cases} 1 & i=j \le r \\ 0 & \text{else} \end{cases} \]
and 
\[ \chi_{i,j,i',j'} = \begin{cases} 1 & i,j,i',j' \le r, i' = i+1 (\text{mod }r), j' = j+1 (\text{mod }r) \\ 0 & \text{else} \end{cases}. \]
Then $\mbf A^\ast, \mbf B^\ast$ define the overlap matrices of a pair strongly equitable $k$-colourings of $n'$-lifts of $G$. (Essentially, we remove one vertex of each of the first $r$ colours from each fiber from each colouring and removes edges of the appropriate colour pairs to ensure the $\mbf B^\ast$'s still satisfy \eqref{eq:abcons2}. This is possible because $\mbf A, \mbf B \in \mathbb{Y}_n(1)$ and therefore $\mbf A^\ast$ and $\mbf B^\ast$ remain non-negative.)

Recall from \eqref{eq:ABsplit} that the expected number of lifts respecting $\mbf A, \mbf B$ is
\[ \frac{1}{n!^{|E|}} \prod_{v\in V}\binom{n}{n A_v}
\prod_{e\in E} \frac{(n A_v)!(nA_{v'})!}{(nB_e)!}. \]
Call this quantity $\ell(\mbf A, \mbf B)$. Then as $\mbf A, \mbf B \in \mathbb{Y}_n(1)$,
\begin{align*}
\frac{\ell(\mbf A, \mbf B)}{\ell(\mbf A^\ast, \mbf B^\ast)} &= \left(\frac{n^r}{n_{(r)}}\right)^{|E|-|V|} \prod_{v \in V}\prod_{i \in [r]} \frac{1}{a_{v,i,i}} \prod_{e \in E}\prod_{i \in [r]} \frac{a_{v,i,i}a_{v',i,i}}{b_{e,i,i,i+1,i+1}}\\
	&<\left(\frac{n^r}{n_{(r)}}\right)^{|E|-|V|} \left(\frac{1}{\frac{1}{k^2}-\frac{\log n}{\sqrt{n}}}\right)^{r|V|} \left(\frac{(\frac{1}{k^2}+\frac{\log n}{\sqrt{n}})^2}{\frac{1}{k^2(k-1)^2}-\frac{\log n}{\sqrt{n}}}\right)^{r|E|}\\
	&\to \left(k^{|V|}\left(\frac{k-1}{k}\right)^{|E|}\right)^{2r}
\end{align*}
By Proposition~\ref{prop:central_sum}, noting $p(n, \mbf A, \mbf B)$ is bounded by a polynomial, we have
\begin{align*}
\ex Y_n^2 &\sim \sum_{\mbf A, \mbf B \in \mathbb{Y}_n(1)}p(n, \mbf A, \mbf B) \exp\big( n f(\mbf A, \mbf B)\big)\\
	&\sim \sum_{\{ \mbf A^\ast, \mbf B^\ast\}} \left(k^{|V|}\left(\frac{k-1}{k}\right)^{|E|}\right)^{2r} p(n', \mbf A^\ast, \mbf B^\ast) \exp\big( n' f(\mbf A^\ast, \mbf B^\ast)\big)
\end{align*}

Now we claim that there is $\gamma > 0$ such that $\mathbb{Y}_{n'}(\gamma) \subset \{\mbf
A^\ast, \mbf B^\ast\}$ and thus by Proposition~\ref{prop:central_sum}
\begin{align*}
\left(k^{|V|}\left(\frac{k-1}{k}\right)^{|E|}\right)^{2r} \ex Y_{n'}^2 &\sim
\sum_{\mbf A, \mbf B \in \mathbb{Y}_{n'}(\gamma)}
\left(k^{|V|}\left(\frac{k-1}{k}\right)^{|E|}\right)^{2r} p(n',\mbf A, \mbf
B)\exp(n' f(\mbf A, \mbf B))\\
	&\le \sum_{\{ \mbf A^\ast, \mbf B^\ast\}}
\left(k^{|V|}\left(\frac{k-1}{k}\right)^{|E|}\right)^{2r} p(n', \mbf A^\ast,
\mbf B^\ast) \exp\big( n' f(\mbf A^\ast, \mbf B^\ast)\big)\\
	&\le \sum_{\mbf A, \mbf B}
\left(k^{|V|}\left(\frac{k-1}{k}\right)^{|E|}\right)^{2r} p(n',\mbf A, \mbf
B)\exp(n' f(\mbf A, \mbf B))\\
	&= \left(k^{|V|}\left(\frac{k-1}{k}\right)^{|E|}\right)^{2r}\ex Y_{n'}^2
\end{align*}
and therefore
\[ \sum_{\{ \mbf A^\ast, \mbf B^\ast\}}
\left(k^{|V|}\left(\frac{k-1}{k}\right)^{|E|}\right)^{2r} p(n', \mbf A^\ast,
\mbf B^\ast) \exp\big( n' f(\mbf A^\ast, \mbf B^\ast)\big) \sim
\left(k^{|V|}\left(\frac{k-1}{k}\right)^{|E|}\right)^{2r} \ex Y_{n'}^2. \]

We will show that for all $\mbf C, \mbf D \in \mathbb{Y}_{n'}(\gamma)$, there is $\mbf A, \mbf B \in \mathbb{Y}_{n'}(1)$ such that
$\mbf A^\ast(\mbf A) = \mbf C$ and $\mbf B^\ast(\mbf B) = \mbf D$. Pick $c_{v,i,j}$ such that
\begin{equation}\label{eq:surj1}
\frac{1}{k^2}-\gamma\frac{\log n'}{\sqrt{n'}} \le c_{v,i,j} \le \frac{1}{k^2} +
\gamma\frac{\log n'}{\sqrt{n'}}.
\end{equation}
Then if $\mbf A^\ast(\mbf A) = \mbf C$ we have
\begin{equation} \label{eq:surj2}
\frac{1}{n}(n'a_{v,i,j}+\chi_{i,j}) = c_{v,i,j}
\end{equation}
for some
\begin{equation} \label{eq:surj3}
\frac{1}{k^2}-\frac{\log n}{\sqrt{n}} \le a_{v,i,j} \le \frac{1}{k^2} +
\frac{\log n}{\sqrt{n}}.
\end{equation}
Solving \eqref{eq:surj2} and plugging it into \eqref{eq:surj3} shows that we
require
\[ \frac{1}{k^2}-\frac{\log n}{\sqrt{n}} \le \frac{1}{n'}(nc_{v,i,j}+\chi_{i,j})
\le \frac{1}{k^2} + \frac{\log n}{\sqrt{n}}. \]
Combining with \eqref{eq:surj1} we see that we must choose a $\gamma > 0$ such that
\[ \frac{1}{k^2}-\frac{\log n}{\sqrt{n}} \le
\frac{n'}{n}\left(\frac{1}{k^2}-\gamma\frac{\log n'}{\sqrt{n'}}\right) \]
and
\[ \frac{1}{n}\left(n'\left(\frac{1}{k^2}+\gamma\frac{\log
n'}{\sqrt{n'}}\right)+1\right) \le \frac{1}{k^2} + \frac{\log n}{\sqrt{n}}. \]
Using loose bounds and the fact that $n \ge k \ge 3$, one can show that $\gamma =
0.4$ suffices. One can then repeat the process starting with $d_{e,i,j,i',j'}$ to find that $\gamma = 0.4$ still suffices.

Now if $r = 1$, we repeat the above process except we set $n' = n-2$ and continue as though $r=2$. Then $n' = n-2 = qk+1-2= (q-1)k+(k-1)$ we see
\[ \ex Y_n^2 \sim \left(k^{|V|}\left(\frac{k-1}{k}\right)^{|E|}\right)^{4} \ex Y_{n'}^2 \sim \left(k^{|V|}\left(\frac{k-1}{k}\right)^{|E|}\right)^{2(k+1)} \ex Y_{n-(k+1)}^2. \]
Then as $k\mid n-(k+1)$ and $k \mid n-1$,
Proposition~\ref{prop:expectation_y_squared} gives
\[ \ex Y_{n-(k+1)}^2 = \left(k^{|V|}\left(\frac{k-1}{k}\right)^{|E|}\right)^{-2k}\ex Y_{n-1}^2 \]
and so
\[ \ex Y_n^2 \sim \left(k^{|V|}\left(\frac{k-1}{k}\right)^{|E|}\right)^{2} \ex Y_{n-1}^2. \]
as required.
\end{proof}

Using the same process of removing vertices and edges of specified colour, one can prove analogues of Proposition~\ref{prop:second_moment_non_divisible} showing that
\[ \ex Y_n = \left(k^{|V|}\left(\frac{k-1}{k}\right)^{|E|}\right)^{r} \ex Y_{n'} \and \frac{\ex (Y_nZ_j)}{\ex Y_n} \sim \lambda_j(1+\delta_j).\]
The calculations are similar to and easier than those in the proof of Proposition~\ref{prop:second_moment_non_divisible}, and we omit them. Then the same proof of Theorem~\ref{thm:upper_bound} presented in Section~\ref{sec:main} holds for $n$ not divisible by $k$.

\bibliographystyle{abbrv}
\nocite{*}
%\bibliography{../../draft/bib}
\bibliography{bib}
\end{document}